\numberwithin{equation}{section}
\numberwithin{figure}{section}
\def\theenumi{\@alph\c@enumi}
\DeclareMathSizes{\@xipt}{\@xipt}{7}{6}
\theoremstyle{plain}
\newtheorem{theorem}[equation]{Theorem}
\newtheorem{lemma}[equation]{Lemma}
\newtheorem{corollary}[equation]{Corollary}
\newtheorem{proposition}[equation]{Proposition}
\theoremstyle{definition}
\newtheorem{remark}[equation]{Remark}
\newtheorem{example}[equation]{Example}
\newtheorem{definition}[equation]{Definition}
\newtheorem{notation}[equation]{Notation}
\newtheorem{discussion}[equation]{Discussion}
\newtheorem{observation}[equation]{Observation}
\newtheorem{construction}[equation]{Construction}
\newcommand\spann[1]{\left\langle #1\right\rangle}
\newcommand{\MC}{\mathit{M}\kern -0.1em\mathrm{1}}
\newcommand{\MCC}{\mathit{M}\kern -0.1em\mathrm{2}}
\newcommand{\Grass}{\mathrm{G}}
\newcommand{\Std}{\mathrm{Std}}
\newcommand{\pl}{\mathrm{p}}
\newcommand{\Bl}{\mathrm{Block}}
\newcommand{\Cl}{\mathrm{Class}}
\newcommand{\He}{\mathrm{Head}}
\newcommand{\GL}{\mathrm{GL}}
\newcommand{\emptypart}{\emptyset}
\newcommand\wmodel[2]{v_{#1}^{#2}}
\newcommand\dprl[1]{b_{#1}}
\newcommand{\tab}{\mathbb{T}}
\title[Levi Subgroup Actions on Schubert Varieties]{Levi Subgroup Actions on Schubert Varieties, Induced Decompositions of their Coordinate Rings, and Sphericity Consequences}
\author{Reuven Hodges}
\address{Northeastern University, Boston, Massachusetts. USA.}
\email{hodges.r@husky.neu.edu}
\author{Venkatramani Lakshmibai}
\address{Northeastern University, Boston, Massachusetts. USA.}
\email{lakshmibai@neu.edu}
\begin{document}

\begin{abstract}
 Let $L_w$ be the Levi part of the stabilizer $Q_w$ in $GL_N$ (for left multiplication) of a Schubert variety $X(w)$ in the Grassmannian $\Grass_{d,N}$. For the natural action of $L_w$ on $\mathbb{C}[X(w)]$, the homogeneous coordinate ring of $X(w)$ (for the Pl\"ucker embedding), we give a combinatorial description of the decomposition of $\mathbb{C}[X(w)]$ into irreducible $L_w$-modules; in fact, our description holds more generally for the action of the Levi part $L$ of any parabolic subgroup $Q$ that is contained in $Q_w$. This decomposition is then used to show that all smooth Schubert varieties, all determinantal Schubert varieties, and all Schubert varieties in $\Grass_{2,N}$ are spherical $L_w$-varieties. \end{abstract}
\maketitle

\section{Introduction}
\label{sec:intro}
\numberwithin{equation}{section}
Let $\GL_N$ be the group of invertible $N \times N$ matrices over $\mathbb{C}$. Let $B$ be the Borel subgroup of $\GL_N$ consisting of upper triangular matrices, and $T$ the maximal torus consisting of diagonal matrices. For $1\le d\le N-1$, let $\Grass_{d,N}$ denote the Grassmannian variety consisting of $d$-dimensional subspaces of $\mathbb{C}^N$. 
For the canonical action of $\GL_N$ on $\Grass_{d,N}$ given by left multiplication, the $T$-fixed points in $\Grass_{d,N}$ are denoted by $[e_w]$ for $w \in I_{d,N}:=\{(i_1,\ldots,i_d)\mid 1\leq i_1 < \cdots < i_d \leq N\}$. The Schubert varieties in $\Grass_{d,N}$ are the Zariski closures of $B$-orbits through the $T$-fixed points, with the canonical reduced scheme structure. That is for $w \in I_{d,N}$ the Schubert variety $X(w):=\overline{B[e_w]}$. There is a natural partial order on $I_{d,N}$, referred to as the \emph{Bruhat order}, induced by the partial order on the set of Schubert varieties given by inclusion. We denote the homogeneous coordinate ring of $X(w)$ for the Pl\"ucker embedding by $\mathbb{C}[X(w)]$. 

Let $R$ be a connected reductive group with $B_R$ a Borel subgroup. Suppose that $X$ is an irreducible $R$-variety, then $X$ is a spherical $R$-variety if it is normal  and has an open dense $B_R$-orbit.

Our initial goal was to understand when a Schubert variety $X(w)$ is spherical for the left multiplication action of reductive subgroups of $\GL_N$ that stabilize $X(w)$. Using Proposition \ref{proposition:coneProjVarMultFreeSphericity} we relate this sphericity question to the module structure of $\mathbb{C}[X(w)]$ under the induced action of these reductive subgroups.

Fix a $w\in I_{d,N}$. There is a canonical choice of reductive subgroups of $\GL_N$ that stabilize $X(w)$. Let $Q_w$ be the stabilizer in $\GL_N$ of $X(w)$; it is clearly a parabolic subgroup of $\GL_N$. Let $L_w$ be the Levi part of $Q_w$, it is a reductive group. We have a natural action of $L_w$ on $\mathbb{C}[X(w)]$. The main result of this paper is an explicit description of the decomposition of $\mathbb{C}[X(w)]$ into irreducible $L_w$-submodules. In fact, our description holds for a much larger class of reductive subgroups, that is, for the Levi part $L$ of any parabolic subgroup $Q \subseteq Q_w$(cf. Theorem \ref{theorem:mainDecompositionTheorem} and Corollary \ref{corollary:mainDecompositionTheoremIrred}). Though it would be enough to give such a decomposition for $L_w$ and deduce the result for any $L \subseteq L_w$ by using branching rules, our procedure is independent of the choice of $L$; further, using our description for any $L$ we are able to deduce branching rule formulas (cf. Remark \ref{remark:howeBranching}).

Our decomposition proof uses the standard monomial basis for $\mathbb{C}[X(w)]$ (cf. Theorem \ref{theorem:stdMonTheoryFundamental}). As a graded ring, we have that $\mathbb{C}[X(w)]_r, r\in\mathbb{N}$ has a vector space basis given by the set $\Std_r$ of all standard monomials of degree $r$. Observe that $\Std_1$ is precisely the set of Pl\"{u}cker coordinates \{$\pl_{\tau} \mid \tau \in I_{d,N},\;\tau \leq w\}$ (cf. Section \ref{subsec:stdMonThGrass}). We give the decomposition for $\mathbb{C}[X(w)]_r$ as an $L$-module, which we describe briefly below, in terms of certain Weyl modules associated to $L$. 

Given $X(w)$, and a Levi subgroup $L$ as above, our first main step involves capturing certain Schubert subvarieties $X(\theta)$, characterized by the property that $L$ is contained in the Levi part of $Q_{\theta}$, the stabilizer in $\GL_N$ of $X(\theta)$. A combinatorial description of all such $\theta \in H_w := \left\{\tau \in I_{d,N} \mid \tau \leq w\right\}$ is given in Proposition \ref{proposition:HeadTypeW}. We refer to these $\theta$ as the heads of type $L$ and denote the subset of heads of type $L$ in $H_w$ by $\He_L$. The critical part of this step is showing how $L$ gives rise to a nice partition of the Hasse diagram of $H_w$ into disjoint subdiagrams, each containing a unique head of type $L$. Then for a $\tau \in H_w$ we define $\theta_{\tau} \in \He_L$ to be the unique head in the disjoint subdiagram containing $\tau$.
Finally for $\theta \in \He_L$ we define $\Std_{\theta}:=\{\pl_{\tau} \in \Std_1 \mid \theta_{\tau}=\theta \}$. This gives us the decompositions 

\begin{center}
$\displaystyle \Std_1=\bigsqcup_{\theta \in \He_L} \Std_{\theta}$
\end{center}
and
\begin{center}
$\displaystyle \mathbb{C}[X(w)]_1 = \spann{\Std_1} = \bigoplus_{\theta \in \He_L} \spann{\Std_{\theta}}$,
\end{center}
where $\spann{Y}$ denotes the linear span of the elements in $Y \subseteq \mathbb{C}[X(w)]$.

This decomposition of $\mathbb{C}[X(w)]_1$ would hold for any partition of the Hasse diagram of $H_w$ into disjoint subdiagrams each containing a unique head of type $L$. However, for the partition we consider we have that the $\spann{\Std_{\theta}}$ are in fact  irreducible $L$-submodules; our partition is the unique partition for which this is the case. Thus the above decomposition is an $L$-module decomposition of the degree one part of $\mathbb{C}[X(w)]$.

The second step is to extend this idea to higher degrees. For $\tau_1,\ldots,\tau_r \in H_w$ we define the degree r head of $(\tau_1,\ldots,\tau_r)$ to be the sequence $(\theta_{\tau_1},\ldots,\theta_{\tau_r})$. We define $\He_{L,r}$ to be the set of all degree r heads. Then $\He_{L,r}^{std}$ is defined to be the subset of degree r heads $(\theta_1,\ldots,\theta_r)$ such that $\theta_1 \geq \cdots \geq \theta_r$. As before for $\underline{\theta} \in \He_{L,r}^{std}$ we define $\Std_{\underline{\theta}}:=\{\pl_{\tau_1}\cdots \pl_{\tau_r} \in \Std_r \mid \underline{\theta} = (\theta_{\tau_1},\ldots,\theta_{\tau_r})\}$. Remarkably these $\Std_{\underline{\theta}}$ for $\underline{\theta} \in \He_{L,r}^{std}$ once again partition the set $\Std_r$. This is by no means readily apparent and is due to the fact that given $\tau_1,\ldots,\tau_r \in H_w$ such that $\tau_1 \geq \cdots \geq \tau_r$ we have $\theta_{\tau_1} \geq \cdots  \geq \theta_{\tau_r}$(cf. Proposition \ref{proposition:PartitionofDegRStandard}). Thus we have the decompositions
\begin{center}
$\displaystyle \Std_r=\bigsqcup_{\underline{\theta} \in \He_{L,r}^{std}} \Std_{\underline{\theta}}$
\end{center}
and
\begin{center}
$\displaystyle \mathbb{C}[X(w)]_r = \spann{\Std_r} = \bigoplus_{\underline{\theta} \in \He_{L,r}^{std}} \spann{\Std_{\underline{\theta}}}$.
\end{center}
Unfortunately, for $r>1$ the latter decomposition is no longer a $L$-module decomposition. This is due to the way that the $L$-action interacts with the standard monomial straightening rule which results in certain $\spann{\Std_{\underline{\theta}}}$ not being $L$-stable. Thus in higher degrees our decomposition must be modified.

To achieve this we introduce the partial order $\geq_{str}$ on the set of degree r heads $\He_{L,r}$. This order is just the lexicographic order on these sequences, with the order on each individual entry being the Bruhat order. For $\underline{\theta} \in \He_{L,r}^{std}$ we define 
\begin{center}
$\Std_{\underline{\theta}}^{\geq_{str}} = \left\{\pl_{\tau_1} \cdots \pl_{\tau_r} \in \Std_r \mid (\theta_{\tau_1},\ldots,\theta_{\tau_r}) \geq_{str} \underline{\theta} \right\}$
\end{center}
and 
\begin{center}
$\Std_{\underline{\theta}}^{>_{str}} = \left\{\pl_{\tau_1} \cdots \pl_{\tau_r} \in \Std_r \mid (\theta_{\tau_1},\ldots,\theta_{\tau_r}) >_{str} \underline{\theta} \right\}$.
\end{center}
The next step is to show that $\spann{\Std_{\underline{\theta}}^{\geq_{str}}}$ and $\spann{\Std_{\underline{\theta}}^{>_{str}}}$ are $L$-stable. To show this we equivalently check that they are $Lie(L)$ stable. Once we have done this we define $U_{\underline{\theta}}$ to be a $L$-module complement of $\spann{\Std_{\underline{\theta}}^{>_{str}}}$ inside of $\spann{\Std_{\underline{\theta}}^{\geq_{str}}}$. 

A final result is required before the description of the degree r decomposition. The $U_{\underline{\theta}}$ are $L$-submodules of  $\spann{\Std_r}$. As $L$ is equal to a product of general linear groups we should be able to describe any $L$-module, in particular the $U_{\underline{\theta}}$, in terms of tensor products of Weyl modules. As vector spaces $U_{\underline{\theta}} \cong \spann{\Std_{\underline{\theta}}}$. In view of this we first describe a vector space map from $\spann{\Std_{\underline{\theta}}}$ to a certain external tensor product of (skew) Weyl modules denoted $\mathbb{W}_{\underline{\theta}}$, using the combinatorics of the standard monomials. This map as well as some character arguments are then used to conclude that the $L$-module $U_{\underline{\theta}}$ has the form $\mathbb{W}^{*}_{\underline{\theta}}$.

We are now ready to state our main result(cf. Theorem \ref{theorem:mainDecompositionTheorem}).

\begin{theorem}
Let $\underline{\theta} \in \He_{L,r}^{std}$. There exists a $L$-submodule $U_{\underline{\theta}} \subseteq \spann{\Std_r}$ such that we have the following $L$-module isomorphisms:
\begin{enumerate}
\item $\spann{\Std_{\underline{\theta}}^{\geq_{str}}} = U_{\underline{\theta}} \oplus \spann{\Std_{\underline{\theta}}^{>_{str}}}$.
\item $\displaystyle \spann{\Std_r} = \bigoplus_{\underline{\theta} \in \He_{L,r}^{std}} U_{\underline{\theta}}$.
\item $U_{\underline{\theta}} \cong \mathbb{W}^{*}_{\underline{\theta}}$
\end{enumerate}
\end{theorem}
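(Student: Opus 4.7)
The plan is to follow the three-part architecture announced in the introduction. The crux is the $L$-stability of the two filtration pieces $\spann{\Std_{\underline{\theta}}^{\geq_{str}}}$ and $\spann{\Std_{\underline{\theta}}^{>_{str}}}$; once that is in hand, parts (1) and (2) follow by complete reducibility and a combinatorial partition argument, and part (3) is identified by a character computation.

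First, I would check stability under $\mathrm{Lie}(L)$ (this suffices because $L$ is reductive and acts rationally). Using the explicit action of the Chevalley root vectors of $\mathrm{Lie}(L)$ on a single Pl\"ucker coordinate $\pl_\tau$ — a sum of $\pl_{\tau'}$ obtained by modifying one index — the defining property of a head of type $L$ (Proposition \ref{proposition:HeadTypeW}) forces every such $\tau'$ to lie in $H_w$ and to satisfy $\theta_{\tau'} \geq \theta_\tau$ in the Bruhat order; this is essentially the assertion that the partition of the Hasse diagram of $H_w$ into blocks indexed by $\He_L$ is the unique one compatible with the $L$-action on $\mathbb{C}[X(w)]_1$. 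Applying Leibniz to a standard monomial $\pl_{\tau_1} \cdots \pl_{\tau_r}$ and straightening the result via Theorem \ref{theorem:stdMonTheoryFundamental}, I then verify that each resulting standard summand has head sequence $\geq_{str}$ that of the original. Two inputs do the work: the monotonicity statement $\tau_1 \geq \cdots \geq \tau_r \Rightarrow \theta_{\tau_1} \geq \cdots \geq \theta_{\tau_r}$ of Proposition \ref{proposition:PartitionofDegRStandard}, and the standard fact that straightening a product $\pl_\sigma \pl_\rho$ yields only $\pl_{\sigma'} \pl_{\rho'}$ with $\sigma' \geq \sigma$, ensuring the lexicographic head sequence can only go up.

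With $L$-stability established, complete reducibility furnishes an $L$-stable complement $U_{\underline{\theta}}$ of $\spann{\Std_{\underline{\theta}}^{>_{str}}}$ in $\spann{\Std_{\underline{\theta}}^{\geq_{str}}}$, proving (1). For (2), enumerate $\He_{L,r}^{std}$ in a linear refinement of $\geq_{str}$, telescope the decompositions in (1), and use that $\Std_r = \bigsqcup_{\underline{\theta}} \Std_{\underline{\theta}}$ (Proposition \ref{proposition:PartitionofDegRStandard}), so the maximal piece $\spann{\Std_{\underline{\theta}}^{\geq_{str}}}$ eventually exhausts $\spann{\Std_r}$ while the top $\spann{\Std_{\underline{\theta}}^{>_{str}}}$ is zero.

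For (3), I would use the vector-space map $\spann{\Std_{\underline{\theta}}} \to \mathbb{W}_{\underline{\theta}}$ described in the introduction, which sends each standard monomial with head sequence $\underline{\theta}$ to the associated external tensor of skew tableaux. Composed with the canonical isomorphism $U_{\underline{\theta}} \cong \spann{\Std_{\underline{\theta}}^{\geq_{str}}} / \spann{\Std_{\underline{\theta}}^{>_{str}}} \cong \spann{\Std_{\underline{\theta}}}$ coming from (1), this gives a vector-space isomorphism $U_{\underline{\theta}} \to \mathbb{W}_{\underline{\theta}}$. To upgrade this to an $L$-module isomorphism with $\mathbb{W}_{\underline{\theta}}^{*}$, I compute the formal $T$-character of $U_{\underline{\theta}}$ from the weights of its Pl\"ucker basis and compare with the known character of $\mathbb{W}_{\underline{\theta}}^{*}$; because $L$ is a product of general linear groups and both sides are highest-weight-determined $L$-modules, matching characters forces isomorphism. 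The appearance of the dual rather than $\mathbb{W}_{\underline{\theta}}$ itself is a standard contravariance between the lowest weights carried by Pl\"ucker coordinates and the highest weights on the tableau side.

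The main obstacle is the $\mathrm{Lie}(L)$-stability of the two filtration pieces. A priori, straightening can generate many ``lower'' standard monomials whose individual indices move both up and down in Bruhat order, and the subtle point is that, in aggregate, the associated heads of type $L$ can only rise in the lexicographic order. Once this combinatorial $L$-stability is secured, the remaining steps — taking complements, telescoping, and matching characters — are routine applications of reductive-group representation theory.
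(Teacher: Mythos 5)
Your outline follows the paper's architecture closely: $\mathfrak{l}$-stability of the filtration pieces via the straightening algorithm, complete reducibility to produce the complement $U_{\underline{\theta}}$, a partition/dimension argument for the direct sum, and a character comparison to identify $U_{\underline{\theta}} \cong \mathbb{W}^{*}_{\underline{\theta}}$. Your telescoping variant for part (2), using a linear refinement of $\geq_{str}$, is a valid and arguably cleaner alternative to the paper's pairwise-intersection argument, so that is not a concern.

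The gap is in the $\mathfrak{l}$-stability step, which is the linchpin. You assert that two inputs suffice: the monotonicity $\tau' \geq \tau \Rightarrow \theta_{\tau'} \geq \theta_{\tau}$ (Proposition \ref{proposition:PartitionofDegRStandard}) together with the shuffle's ordering property $\alpha^\sigma > \tau$, and conclude that the lexicographic head sequence ``can only go up.'' But the shuffle simultaneously produces $\beta^\sigma < \phi$, hence $\theta_{\beta^\sigma} \leq \theta_\phi$, and $\geq_{str}$ is lexicographic. So if a shuffle left $\theta_{\alpha^\sigma} = \theta_\tau$ unchanged while strictly dropping $\theta_{\beta^\sigma} < \theta_\phi$, the degree-$r$ head sequence would strictly \emph{decrease} in $\geq_{str}$, destroying the $L$-stability of $\spann{\Std_{\underline{\theta}}^{\geq_{str}}}$. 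Nothing in your two cited inputs rules this out. The paper closes this hole with the dichotomy of Lemma \ref{lemma:deg2StraightenPartialOrder}: $\theta_{\alpha^\sigma} = \theta_\tau$ if and only if $\theta_{\beta^\sigma} = \theta_\phi$, and otherwise both change strictly. This is a genuine combinatorial fact, proved by a conservation argument (a shuffle permutes a fixed multiset of indices between the two Pl\"ucker factors, so the per-block index counts in the first factor can change only if they change compensatingly in the second). Without this dichotomy, Theorem \ref{theorem:degrStraightenPartialOrder} and hence Corollary \ref{corollary:StdThetaLStable} have no proof, and parts (1)--(3) of the statement all collapse. A smaller imprecision in the same step: for $i \in R_Q$ the Chevalley action gives $\theta_{s_{\alpha_i}\tau} = \theta_\tau$ exactly (Remark \ref{remark:lactionSameHead}, via Lemma \ref{lemma:ClassUnchangedR_Q}), not merely $\geq$. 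This equality is what ensures that after Leibniz the resulting (possibly nonstandard) monomial carries the \emph{same} head sequence as the original, so that only the straightening step can raise it; stating it as an inequality compounds the gap above, since a coordinatewise $\geq$ on a sequence does not imply $\geq_{str}$.
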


The $U_{\underline{\theta}}$ may not be irreducible $L$-modules, but their decomposition into irreducibles can now be computed simply by calculating the decomposition of certain tensor products of Weyl modules. This is done in Corollary \ref{corollary:mainDecompositionTheoremIrred} where we give the explicit decomposition of $\mathbb{C}[X(w)]$ into irreducible $L$-modules.

As an important application, we note that if $\mathbb{C}[X(w)]$ has a multiplicity free decomposition into irreducible $L_w$-modules then $\widehat{X(w)}$, the cone over $X(w)$, is a spherical $L_w$-variety. We then use this to show that $X(w)$ is a spherical $L_w$-variety. We conclude that all smooth Schubert varieties, all determinental Schubert varieties(and determinental varieties), and all Schubert varieties in $\Grass_{2,N}$ are spherical $L_w$-varieties. We also get that the coordinate ring of any determinental variety has a multiplicity free decomposition into irreducible $L_w$-modules.

We hope to extend the results of this paper, using similar techniques, to any Schubert variety in $\GL_N/Q$, where $Q$ is any parabolic subgroup, as well as to Schubert varieties in the Lagrangian and Orthogonal Grassmannians. The combinatorial results that one may obtain for the spherical Schubert varieties (by virtue of them being spherical varieties) is also a topic we hope to investigate.
 
 The sections are organized as follows: Section 2 is on Preliminaries pertaining to Schubert varieties in $\Grass_{d,N}$, standard monomial basis, and representation theory of the general linear group. In Section 3, we introduce the heads of type $L$, the degree r heads, and after proving some preparatory results, we determine the decomposition (as an $L$-module) of $\mathbb{C}[X(w)]$. In Section 4, we show that $X(w)$ is a spherical $L_w$-variety if the decomposition of $\mathbb{C}[X(w)]$ into irreducible $L_w$-submodules is multiplicity-free. Using this, and the results from section 3, we prove that many classes of Schubert varieties are spherical $L_w$-varieties.

\section*{Acknowledgements}
The authors thank Guido Pezzini for many helpful discussions during their visit to FAU Erlangen-N\"{u}rnberg in June 2015 and for his insights into spherical varieties in particular. The authors would also like to thank Friedrich Knop for some useful discussions on spherical varieties. Additionally, the authors would like to thank FAU Erlangen-N\"{u}rnberg for the hospitality extended to them during their visit in June 2015. Finally, the authors thank the referee for many helpful comments and suggestions. 

\section{Preliminaries}
\label{sec:prelim}
\subsection{The general linear group}
\label{subsec:glN}
\numberwithin{equation}{subsection}
In this section and the next we fix the notation that will be used throughout the paper. For a more in depth introduction to these topics see \cite{MR3408060} and \cite{MR2388163}.

Fix a positive integer $N$, and let $\{e_1,\ldots,e_N\}$ be the standard basis of $\mathbb{C}^{N}$. We will do all computations over the field $\mathbb{C}$. We denote by $\GL_N$ the invertible $N \times N$ matrices over $\mathbb{C}$. Let $T$ be the \emph{standard maximal torus} comprised of diagonal matrices, and $B$ the \emph{standard Borel subgroup} comprised of upper triangular matrices.

Let $\mathfrak{X}(T):={\mathrm{Hom}}_{{\mathrm{alg.gp}}}(T,{\mathbb{C}}^{*})$ be the character group of $T$; it is a free abelian group of rank $N$ with a basis $\{\epsilon_i, 1\le i\le N\}, \epsilon_i$ being the character which sends a diagonal matrix $diag(t_1,\ldots, t_N)$ to its $i$-th entry $t_i$. The elements of $\mathfrak{X}(T)$ will be referred to (formally) as {\emph weights}. We will often simplify our notation by referring to an element of $\mathfrak{X}(T)$ by the sequence $(a_1,\ldots,a_N)$, $a_i \in \mathbb{Z}$, which corresponds to the weight $\sum\,a_i\epsilon_i \in \mathfrak{X}(T)$ that sends $diag(t_1,\ldots, t_N)$ to $\prod_i t_i^{a_i}$. 

We will call a weight $(a_1,\ldots,a_N)$ such that $a_1\ge\cdots\ge a_N$ a {\emph weakly dominant weight}. If in addition, $a_N \geq 0$ then it is called a dominant weight (cf. \cite{MR1153249}). Recall that the set of all dominant weights gives an indexing of the set of all irreducible polynomial representations of $GL_N$, while the weakly dominant weights index the set of all irreducible rational representations of $GL_N$. 

Let $V$ be a finite-dimensional $T$-module. Then we have the decomposition 
\begin{center}
$V=\displaystyle \bigoplus_{\chi\in \mathfrak{X}(T)}\,V_\chi$
\end{center}
where $V_\chi$ is $T$-weight space consisting of all vectors $v\in V$ such that $t\cdot v=\chi(t)v$, for all $t\in T$. If $v \in V_\chi$ we say that $v$ has weight $\chi$, and write $wt(v)=\chi$. Let $m_\chi=dim\,V_\chi$. We define the {\emph character} of $V$, denoted char$\,(V)$, as the element in ${\mathbb Z}[\mathfrak{X}(T)]$, the group algebra of $\mathfrak{X}(T)$, given by $$char\,(V):=\sum\,m_\chi e^\chi.$$


\begin{remark}
\label{remark:dominantWeightProduct}
If $G$ is a product, say $G=GL_M \times GL_N$, by a weakly $G$-dominant weight of $G$, we shall mean a sequence $(a_1,\ldots,a_M,b_1,\ldots,b_N),a_i,b_j\in\mathbb{Z}$, with $a_1\ge\ldots\ge a_M$ and $b_1\ge \ldots\ge b_N$. If in addition, $a_M \geq 0$ and $b_N \geq 0$ then it is a $G$-dominant weight. Given a $GL_M$-module V and a $GL_N$-module $W$, consider the $GL_M\times GL_N$-module $V\otimes W$ given by the natural diagonal action; we say that $V\otimes W$ is a polynomial (rational) representation of $GL_M\times GL_N$, if $V,W$ are polynomial (rational) representations of $GL_M, GL_N$ respectively. By $char(\,V\otimes W)$, we shall mean the element $(char\,V, char\,W)$ of ${\mathbb Z}[\mathfrak{X}(T_M)]\times {\mathbb Z}[\mathfrak{X}(T_N)]$, where $T_M,T_N$ denote the maximal tori, consisting of diagonal matrices in $GL_M,GL_N$ respectively. More generally for $G=GL_{N_1} \times \cdots \times GL_{N_r}$ these notions extend in the obvious way.
\end{remark}
Let $\Phi$ be the \emph{root system} of $\GL_N$(cf. \cite[Chapter~3]{MR2388163}). It is the set $\{\epsilon_i - \epsilon_j \mid 1\leq i,j \leq N\}$, where $\epsilon_i - \epsilon_j$ is the element of $\mathfrak{X}(T)$ which sends diag$(t_1,\ldots,t_n)$ in $T$ to $t_it_j^{-1}$ in $\mathbb{C}$. Our choice of the torus $T$ and the Borel subgroup $B$ induces a set of \emph{positive roots} $\Phi^+=\{\epsilon_i - \epsilon_j \mid 1\leq i<j \leq N \}$ and \emph{simple roots} $S=\{\alpha_i:=\epsilon_i - \epsilon_{i+1} \mid 1\leq i \leq N-1 \}$. 

For every $1\leq d \leq N-1$ there is a \emph{maximal parabolic subgroup} $P_{d}$ of $\GL_N$ that corresponds to the subgroup of all matrices with a block of size $N-d \times d$ in the lower left corner with all entries equal to zero.
\begin{center}
$P_{d}=\left\{ \left[ \begin{array}{cc}
* & * \\
0_{N-d \times d} & * \\
\end{array} \right] \in \GL_N \right\}$
\end{center}
\begin{remark}
\label{remark:formofParabolics}
There is a bijection between the subsets $A\subset\{1,\ldots,N-1\}$ and the parabolic subgroups of $\GL_N$, given by 
\begin{center}
$P_A=\displaystyle \bigcap_{d \in \{1,\ldots,N-1\}\setminus A} P_{d}$.
\end{center}
\end{remark}

The Weyl group $W$ of $\GL_N$ is generated by the simple reflections $s_{\alpha_i}$ for $\alpha_i \in S$ and is isomorphic to the symmetric group $\mathfrak{S}_{N}$ of permutations on $N$ symbols under the map sending $s_{\alpha_i}$ to the transposition $(i,i+1)$. The one-line notation for elements of $W$ is a sequence $(x_1,\ldots,x_N)$ and this sequence corresponds to the permutation that sends $i \mapsto x_i$. For the parabolic subgroup $P_A$, $A\subset\{1,\ldots,N-1\}$, $W_{P_A}$ is the subgroup of $W$ generated by $\{s_{\alpha_i}\mid i \in A\}$. Since $P_{d} = P_{\{1,\ldots,d-1,d+1,\ldots,N\}}$, we have that $W_{P_{d}}=\mathfrak{S}_d \times \mathfrak{S}_{N-d}$.

\subsection{Standard monomial theory for the Grassmannian}
\label{subsec:stdMonThGrass} 
The \emph{Grassmannian} $\Grass_{d,N}$ is the set of all d-dimensional subspaces of $\mathbb{C}^{N}$. For $U\in \Grass_{d,N}$ fix a basis $\{u_1,\ldots,u_d\}$ of $U$ and define the map
\begin{center}
$\Grass_{d,N}\longrightarrow \mathbb{P}(\bigwedge^{d}\mathbb{C}^{N})$

$U \mapsto [u_1 \wedge \cdots \wedge u_d]$
\end{center}

This map does not depend on the choice of basis for $U$ and hence is well defined; this is the well-known Pl\"{u}cker embedding. Set $I_{d,N}:=\{(i_1,\ldots,i_d)\mid 1\leq i_1 < \cdots < i_d \leq N\}$. Then 
\begin{center}
$\{e_{\tau}:=e_{i_1} \wedge \cdots \wedge e_{i_d}\}_{\tau=(i_1,\ldots,i_d)\in I_{d,N}}$
\end{center}
is the standard basis for $\bigwedge^{d}\mathbb{C}^{N}$. Let $\{\pl_{\phi} \}_{\phi \in I_{d,N}}$ be the basis for $(\bigwedge^{d}\mathbb{C}^{N})^{*}$ dual to $\{e_{\tau}\}_{\tau\in I_{d,N}}$. This dual basis gives a set of projective coordinates for $\mathbb{P}(\bigwedge^{d}\mathbb{C}^{N})$, called the \emph{Pl\"{u}cker coordinates}. These coordinates have a particularly nice description for points in $\Grass_{d,N}$ in terms of determinants: for $U\in \Grass_{d,N}$ fix a basis $\{u_1,\ldots,u_d\}$ of $U$ as above and let $M$ be the $N \times d$ matrix with columns $u_1,\ldots,u_d$. Then for $\phi=(j_1,\ldots,j_d)$ the Pl\"{u}cker coordinate $\pl_{\phi}(U)$ is the $d$-minor with row indices $j_1,\ldots,j_d$ of $M$. The Pl\"{u}cker embedding equips $\Grass_{d,N}$ with a projective variety structure, realized as the zero set of the well known quadratic Pl\"{u}cker relations.

The Grassmannian is a homogeneous space for the action of $\GL_N$ induced on $\mathbb{P}(\bigwedge^{d}\mathbb{C}^{N})$ by the $\GL_N$ action on $\mathbb{C}^{N}$ and hence on $\bigwedge^{d}\mathbb{C}^{N}$. Let $e_{id}:=e_1\wedge \cdots \wedge e_d$, then $[e_{id}]\in \mathbb{P}(\bigwedge^{d}\mathbb{C}^{N})$ and the $\GL_N$ orbit through $[e_{id}]$ is $\Grass_{d,N}$, and the isotropy subgroup at $[e_{id}]$ is precisely $P_{d}$. Thus we obtain the identification  $\Grass_{d,N}=\GL_N/P_{d}$.

 The $T$-fixed points in $\GL_N/P_{d}$ are $\{wP_{d} \mid w \in W/W_{P_{d}}\}$. Since $W=\mathfrak{S}_{N}$ and $W_{P_{d}}=\mathfrak{S}_d \times \mathfrak{S}_{N-d}$ we have that $W/W_{P_{d}}$ may be identified with $I_{d,N}$. Under the identification of $\Grass_{d,N}$ with $\GL_N/P_{d}$ we see that the point $[e_{i_1}\wedge \cdots \wedge e_{i_d}]$ gets identified with $wP_{d}$ where $w$ is an element of $W$ with unordered first $d$ entries equal to $\{i_1,\ldots,i_d\}$. Thus the $T$-fixed points of $\Grass_{d,N}$ are $[e_{\tau}]$ for $\tau \in I_{d,N}$.

For $w \in I_{d,N}$ the \emph{Schubert variety} in $\Grass_{d,N}$ associated to $w$ is $X(w):=\overline{B [e_w]}$, the $B$-orbit closure of the $T$-fixed point $[e_w]$, equipped with the canonical reduced scheme structure.

There is a natural partial order on $I_{d,N}$, referred to as the \emph{Bruhat order}, induced by the partial order on the set of Schubert varieties given by inclusion. Explicitly, for $\tau:=(i_1,\ldots,i_d), w:=(\ell_1,\ldots,\ell_d) \in I_{d,N}$ we have $\tau \leq w$ if $X(\tau)\subseteq X(w)$. Since $[e_{\tau}] \in X(w)$ if and only if $X(\tau)\subseteq X(w)$, it can be shown that $\tau \leq w$ if and only if $i_1 \leq \ell_1,\ldots,i_d \leq \ell_d$.

Note that for the choice of $w=(N-d+1,\ldots,N)\in I_{d,N}$ we have that the Schubert variety $X(w)$ is in fact $\Grass_{d,N}$ itself. This follows from the \emph{Bruhat decomposition} 
\begin{center}
$X(w)=\displaystyle \bigcup_{\tau \leq w} B [e_{\tau}]$.
\end{center}

For a $d$-tuple $(x_1,\ldots,x_d)$ of integers, $(x_1,\ldots,x_d)\uparrow$ will denote the $d$-tuple obtained from $(x_1,\ldots,x_d)$ by arranging the entries in ascending order. Then for $(x_1,\ldots,x_N)$, $(y_1,\ldots,y_N) \in W$ we define
\begin{center}
$(x_1,\ldots,x_N) \leq (y_1,\ldots,y_N) \iff\; (x_1,\ldots,x_d)\uparrow \;\;\leq\; (y_1,\ldots,y_d)\uparrow \textrm{ for all } 1 \leq d \leq N-1$,
\end{center}
where the symbol $\leq$ on the right hand side is the Bruhat order defined above. This partial order on $W$ is also referred to as the Bruhat order. We define $W^{P_{d}}$ to be the set of minimal representatives of $W / W_{P_{d}}$ in $W$ under the Bruhat order. Recalling that $W_{P_{d}}=\mathfrak{S}_d \times \mathfrak{S}_{N-d}$ we see that the set of minimal representatives of $W / W_{P_{d}}$ in $W$ is
\begin{center}
$\{(x_1,\ldots,x_N) \in W \mid x_1 < \cdots < x_d\;,\;\;x_{d+1} < \ldots < x_{N} \  \}$
\end{center}
and hence $W^{P_{d}}$ can be identified with $I_{d,N}$. Thus, for the rest of the paper, elements of $W^{P_{d}}$ will be identified by their corresponding $d$-tuples in $I_{d,N}$.

\begin{remark}
\label{remark:weightPlucker}
Let $\tau\in W^{P_{d}}$, say $\tau=(i_1,\ldots,i_d)$. Denote the integers $\{1,\ldots,N\} \setminus \{i_1,\ldots,i_d\}$ by $j_1,\ldots,j_{N-d}$(arranged in ascending order). Let $det_N$ be the determinant representation of $\GL_N$ (cf. Definition \ref{definition:determinantRep}). The $\GL_N$-representation $(\bigwedge^{d}\mathbb{C}^{N})^{*}$ is isomorphic to the $\GL_N$-representation $\bigwedge^{N-d}\mathbb{C}^{N} \otimes (det_N)^{*}$, and under this identification the Pl\"{u}cker co-ordinate $\pl_\tau$ corresponds to the element $e_{j_1}\wedge\cdots\wedge e_{j_{N-d}} \otimes (e_1 \wedge \cdots \wedge e_N)^{*} \in \bigwedge^{N-d}\mathbb{C}^{N} \otimes (det_N)^{*}$. Thus, the weight of $\pl_\tau$ is $\epsilon_{j_{1}}+\cdots+ \epsilon_{j_{N-d}} + (-1,\ldots,-1)$ and is given by the sequence $\chi_{\tau}:=(\chi_1,\ldots,\chi_{N})$ where
\begin{center}
$\chi_i \coloneqq \left\{
\begin{array}{ll}
      -1 & i\in \tau\\
      0 & i\notin \tau 
\end{array} 
\right.$
\end{center}
for all $1\leq i \leq N$.
\end{remark}

Now consider the projective embedding 
\begin{center}
$X(w)\hookrightarrow \Grass_{d,N}\hookrightarrow \mathbb{P}(\bigwedge^{d}\mathbb{C}^{N})$.
\end{center}

Let $\mathbb{C}[X(w)]$ be the homogeneous coordinate ring of $X(w)$ for this projective embedding. As a $\mathbb{C}$-algebra it is generated by $\pl_{\tau}$, $\tau \leq w$. This follows from the fact that $\pl_{\tau}([e_w])=\delta_{\tau,w}$, which implies that $\pl_{\tau}|_{X(w)} \not\equiv 0$ if and only if $[e_{\tau}] \in X(w)$, which occurs if and only if $\tau \leq w$. Thus for $\tau_1,\ldots,\tau_r,w \in W^{P_{d}}$ with $\tau_i \leq w$ for all $1\leq i \leq r$, we have $\pl_{\tau_1} \cdots \pl_{\tau_r} \neq 0$ in $\mathbb{C}[X(w)]_r$.

\begin{definition} 
\label{definition:stdMonomials}
We define the monomial $\pl_{\tau_1} \cdots \pl_{\tau_r}$ to be $\emph{standard}$ if $\tau_1 \geq \cdots \geq \tau_r$. It is $\emph{standard on X(w)}$ if in addition $w\geq \tau_1$.
\end{definition}

\begin{theorem}
\label{theorem:stdMonTheoryFundamental}
(cf. \cite[Theorem~4.3.3.2]{MR2388163})Monomials of degree r standard on X(w) give a $\mathbb{C}$-basis for $\mathbb{C}[X(w)]_r$.
\end{theorem}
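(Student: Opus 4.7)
The plan is to establish two things: (a) that the standard monomials of degree $r$ on $X(w)$ span $\mathbb{C}[X(w)]_r$, and (b) that they are linearly independent. The architecture I would follow is first to prove everything on the Grassmannian $\Grass_{d,N}$ (i.e., when $w = (N-d+1,\ldots,N)$), and then specialize to arbitrary $X(w)$ by restriction.

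For spanning, I would first prove a straightening law on $\mathbb{C}[\Grass_{d,N}]$: for any $\alpha,\beta \in I_{d,N}$ with $\alpha$ and $\beta$ incomparable in the Bruhat order, the quadratic Pl\"ucker relations let us write $\pl_\alpha \pl_\beta = \sum_i c_i \pl_{\gamma_i} \pl_{\delta_i}$ where each $(\gamma_i,\delta_i)$ is a standard pair (i.e., $\gamma_i \geq \delta_i$) and is \emph{strictly larger} than $(\alpha,\beta)$ in the lexicographic order on sorted pairs, with in particular $\gamma_i \geq \max(\alpha,\beta)$ in Bruhat order. Applying this iteratively to any non-standard consecutive pair in a degree-$r$ monomial, and inducting on the lexicographic order of sorted $r$-tuples (which is well-founded because there are finitely many $r$-tuples in $I_{d,N}$), shows that every degree-$r$ monomial in Pl\"ucker coordinates is a $\mathbb{C}$-linear combination of standard monomials. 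To descend to $X(w)$, use that the restriction $\mathbb{C}[\Grass_{d,N}]_r \twoheadrightarrow \mathbb{C}[X(w)]_r$ is surjective, together with the fact (established in the excerpt from the Bruhat decomposition and $\pl_\tau([e_\sigma]) = \delta_{\tau\sigma}$) that $\pl_\tau|_{X(w)} \equiv 0$ iff $\tau \not\leq w$. Hence the straightened expression of any degree-$r$ monomial restricted to $X(w)$ is a combination of standard monomials on $X(w)$.

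For linear independence, suppose $\sum_{\underline{\tau}} c_{\underline{\tau}}\, \pl_{\tau_1}\cdots\pl_{\tau_r} = 0$ on $X(w)$ is a nontrivial relation among standard monomials on $X(w)$, and let $\underline{\tau}^{\,0} = (\tau^{0}_1 \geq \cdots \geq \tau^{0}_r)$ be lexicographically minimal among the indices with $c_{\underline{\tau}} \neq 0$. I would evaluate the relation on a suitable $T$-stable affine neighborhood of $[e_{\tau^{0}_r}]$ in $X(w)$: by $T$-equivariance it suffices to show that after extracting the unique $T$-weight component matching $\chi_{\tau^{0}_1}+\cdots+\chi_{\tau^{0}_r}$ (with $\chi_\tau$ as in Remark \ref{remark:weightPlucker}), only the term indexed by $\underline{\tau}^{\,0}$ survives. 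Standardness together with Bruhat-minimality of $\underline{\tau}^{\,0}$ forces the coincidence of weights to imply $\underline{\tau} = \underline{\tau}^{\,0}$ entrywise, so $c_{\underline{\tau}^{\,0}} = 0$, a contradiction.

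The main obstacle is the straightening law: one must show that the quadratic Pl\"ucker relations are rich enough to eliminate any incomparable pair $\pl_\alpha\pl_\beta$ in favor of strictly larger standard pairs, and that the resulting rewriting process terminates. This is a genuine combinatorial argument, typically carried out by choosing two Pl\"ucker indices whose symmetric difference is maximized in a specific way and verifying that every term produced by the relation is strictly larger in the chosen well-order. Once this is in hand, the remaining ingredients (vanishing of $\pl_\tau$ on $X(w)$ for $\tau \not\leq w$, surjectivity of restriction, and the weight-separation argument for independence) are essentially bookkeeping, and the full statement for arbitrary $X(w)$ follows.
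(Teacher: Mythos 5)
The paper does not prove this theorem; it cites it from the literature, and only the spanning half is sketched in Section~\ref{subsec:straighteningAlgorithm}. Your outline of the spanning argument --- the shuffle with its ordering property, termination of the rewriting process, and the descent from $\Grass_{d,N}$ to $X(w)$ via the vanishing $\pl_\tau|_{X(w)}\equiv 0$ for $\tau\not\leq w$ --- matches what the paper records and is sound modulo the combinatorics you correctly flag as the main labor.

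Your linear independence argument, however, has a genuine gap. Distinct standard degree-$r$ monomials can carry the same $T$-weight, so extracting a weight component does not isolate a single term, and Bruhat-lex-minimality of the surviving index does not force the $r$-tuples to coincide. Concretely, in $\Grass_{2,4}$ with $r=2$, both $\pl_{(3,4)}\pl_{(1,2)}$ and $\pl_{(2,4)}\pl_{(1,3)}$ are standard monomials of total weight $(-1,-1,-1,-1)$; the single quadratic Pl\"ucker relation $\pl_{(1,2)}\pl_{(3,4)} - \pl_{(1,3)}\pl_{(2,4)} + \pl_{(1,4)}\pl_{(2,3)} = 0$ lives entirely inside this one weight space, so weights alone cannot distinguish these monomials or detect which linear combinations vanish. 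Your assertion that ``standardness together with Bruhat-minimality of $\underline{\tau}^{\,0}$ forces the coincidence of weights to imply $\underline{\tau}=\underline{\tau}^{\,0}$ entrywise'' is therefore false, and the contradiction drawn from it does not follow. The ``$T$-stable affine neighborhood'' device you mention does not repair this: localizing at $[e_{\tau^0_r}]$ or restricting to the big cell does not refine the weight decomposition. Standard proofs of linear independence instead proceed by a dimension count that matches the number of standard monomials against an independently computed Hilbert function of $X(w)$, or by an inductive restriction argument along Schubert divisors $X(w') \subset X(w)$ that tracks precisely which standard monomials survive restriction; either of these would have to replace the weight-separation step before your proof is complete.
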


\subsection{The straightening algorithm}
\label{subsec:straighteningAlgorithm}

The generation portion of Theorem \ref{theorem:stdMonTheoryFundamental} usually relies on exhibiting an inductive process that takes a nonstandard monomial and writes it as a sum of standard monomials. This is called straightening the nonstandard monomial, and the entire process is referred to as the straightening process.

The straightening process on the Grassmannian is comprised of an inductive step usually referred to as a shuffle. Let $\tau:=(i_1,\ldots,i_d),\phi:=(j_1,\ldots,j_d) \in W^{P_{d}}$ with $\tau \ngeq \phi$, that is $\pl_{\tau}\pl_{\phi}$ is not standard. This implies there exists a $t$, $t\leq d$ such that $i_m \geq j_m$, for all $1\leq m \leq t-1$, and $i_{t} < j_{t}$. Let $[\tau,\phi]$ denote the set of permutations $\sigma$, other than the identity permutation, of the set $\{i_1,\ldots,i_{t},j_{t},\ldots,j_d\}$ such that $\sigma(i_1)< \cdots <\sigma(i_{t})$ and $\sigma(j_{t})< \cdots <\sigma(j_d)$. 

Define $\alpha^{\sigma} := (\sigma(i_1),\ldots,\sigma(i_t),i_{t+1},\ldots,i_d)\uparrow$ and $\beta^{\sigma} := (j_{1},\ldots,j_{t-1}, \sigma(j_t),\ldots,\sigma(j_d))\uparrow$ (recall, for a $d$-tuple $(x_1,\ldots,x_d)$, $(x_1,\ldots,x_d)\uparrow$ denotes the $d$-tuple obtained from $(x_1,\ldots,x_d)$ by arranging the entries in ascending order). Then
\begin{center}
$\pl_{\tau}\pl_{\phi} = \displaystyle \sum_{\sigma \in [\tau,\phi]} \pm \pl_{\alpha^{\sigma}}\pl_{\beta^{\sigma}}$.
\end{center}

Note that it is possible to keep track of the signs in the above summation but we omit this step since it is not needed for our consideration. It is not difficult to see that either $\pl_{\alpha^{\sigma}}=0$, due to a repeated entry, or $\alpha^{\sigma}>\tau$. For the same reasons either $\pl_{\beta^{\sigma}}=0$, due to a repeated entry, or $\beta^{\sigma}<\phi$. We will refer to this as the \emph{ordering property of the shuffle}.

A single shuffle is not always sufficient to straighten the monomial $\pl_{\tau}\pl_{\phi}$.  In this case, the shuffle is applied inductively to each monomial on the right hand side of the above sum.  This process will eventually terminate after a finite number of steps, guaranteed by the fact that there are only finitely many degree 2 monomials and the ordering property of the shuffles(cf. \cite[Chapter~4]{MR2388163}).

Finally a degree r nonstandard monomial $\pl_{\tau_1} \cdots \pl_{\tau_r} $ may be straightened by applying a shuffle to any pair $\pl_{\tau_k}, \pl_{\tau_{k+1}}$ such that $\pl_{\tau_k} \ngeq \pl_{\tau_{k+1}}$, and then continuing to apply shuffles to any resulting monomials until they are standard. Again this process will terminate in a finite number of steps.

To straighten a nonstandard monomial on $X(w)$ all that is required is to apply the straightening process for the Grassmannian and then to note that in the resulting sum of standard monomials, any that are standard but not standard on $X(w)$ are equal to zero on $X(w)$ (cf. Definition \ref{definition:stdMonomials}). 

\subsection{Young diagrams and tableaux}
\label{subsec:youngDiagramsSSYT}
This section for the most part follows the terminology of \cite{MR1464693} ,\cite{MR1153249}, and \cite{MR1676282}. Let $\lambda=(\lambda_1,\ldots,\lambda_k)$ be a collection of nonnegative integers with $\lambda_1 \geq \cdots \geq \lambda_k$, then for $|\lambda|:=\lambda_1+\cdots+\lambda_k$ we say that $\lambda$ is a \emph{partition} of $|\lambda|$. It will be useful at times to make this notation more succinct by rewriting $\lambda=(\lambda_1,\ldots,\lambda_k)$, replacing any maximal chain $\lambda_i,\ldots,\lambda_{i+j-1}$ where $\lambda_i = \cdots = \lambda_{i+j-1} = a$ with $a^j$. We identify a partition $\lambda$ with its \emph{Young diagram}, also denoted $\lambda$ for simplicity of notation, which is a collection of upper left justified boxes with $\lambda_i$ boxes in the $i$th row. These boxes are referred to by specifying row and column, with the leftmost column denoted column 1, and the topmost row denoted row 1.
\begin{example}
\label{example:youngDiagram}
The partition $(4,2,2,1)=(4,2^2,1)$ is identified with the Young diagram.
\begin{center}
\ytableausetup{smalltableaux}
\begin{ytableau}
\; & \; & \; & \; \\
\; & \; \\
\; & \; \\
\; \\
\end{ytableau}
\end{center}
\end{example}

The \emph{conjugate partition} $\lambda'$ is the partition whose diagram is the transpose of the diagram of $\lambda$, or equivalently, it is defined by setting $\lambda'_i:=\#\{j\mid \lambda_j \geq i\}$.  The conjugate partition of $(4,2,2,1)$ from Example \ref{example:youngDiagram} is $(4,3,1,1)$.

If we have a second partition $\mu$ we write $\mu \subseteq \lambda$ if the diagram for $\mu$ is contained in the diagram for $\lambda$, or equivalently, if $\mu_i \leq \lambda_i$ for $i \geq 1$. If $\mu \subseteq \lambda$ we may define the skew diagram $\lambda / \mu$ which is obtained by deleting the leftmost $\mu_i$ boxes from row $i$ of the diagram $\lambda$ for each row of $\lambda$. The number of boxes in the skew diagram is equal to $|\lambda / \mu|:=|\lambda| - |\mu|$. It is important to note here the fact that $\lambda = \lambda / \emptypart$, where $\emptypart$ is the empty partition, and so many of the definitions made for skew diagrams may be specialized to diagrams of partitions. 

\begin{example}
\label{example:skewYoungDiagram}
The skew diagram for $\lambda / \mu=(4,2,2,1)/(2,1)$ is
\begin{center}
\begin{ytableau}
\none & \none & \; & \; \\
\none & \; \\
\; & \; \\
\; \\
\end{ytableau}
\end{center}
\end{example}

Define $\tilde{\lambda} / \tilde{\mu}$ to be the skew diagram obtained by deleting all empty rows and columns from the skew diagram $\lambda / \mu$. The $\pi$-rotation of a skew diagram $\lambda / \mu$, written $(\lambda / \mu)^{\pi}$, is obtained by rotating $\lambda / \mu$ through $\pi$ radians. For example the $\pi$-rotation of $(4,2,2,1)/(2,1)$ is $(4,4,3,2)/(3,2,2)$.

\begin{remark}
\label{remark:pirotpartition}
If $\lambda \subseteq (m^n)$ for $m,n$ positive integers, then $(m^n)/\lambda$ is a skew diagram and $((m^n)/\lambda)^{\pi}$ is always a partition.
\end{remark}

A tableaux on $\lambda / \mu$ is an assignment of a positive integer to each box of $\lambda / \mu$. A semistandard (Young) tableaux, often abbreviated SSYT, is a tableaux where the values in each box increase weakly along each row, and increase strictly down each column. A standard (Young) tableaux is a semistandard tableaux where the values are distinct integers from 1 to the number of boxes.

\begin{example}
\label{example:skewTableauxSSYTST}
A tableaux, semistandard tableaux, and standard tableaux on $(4,2,2,1)/(2,1)$.
\begin{center}
\begin{ytableau}
\none & \none & 3 & 1 \\
\none & 3 \\
6 & 2 \\
1 \\
\end{ytableau}
\qquad \qquad \qquad
\begin{ytableau}
\none & \none & 1 & 1 \\
\none & 2 \\
1 & 4 \\
2 \\
\end{ytableau}
\qquad \qquad \qquad
\begin{ytableau}
\none & \none & 1 & 6 \\
\none & 4 \\
2 & 5 \\
3 \\
\end{ytableau}
\end{center}
\end{example}

If we fix a partition $\lambda$ and a bound $N$ on the maximum value that can be assigned to a box in a tableaux $\tab$ we may define the Schur function $s_{\lambda}$ by
\begin{center}
$s_{\lambda} = \displaystyle \sum_{\tab\textrm{ a SSYT on }\lambda} x_{1}^{\textrm{\# of 1's in }\tab}\cdots x_{N}^{\textrm{\# of N's in }\tab}$.
\end{center}
In the same way for a skew diagram $\lambda / \mu$ we may define the skew Schur function $s_{\lambda / \mu}$. Both the Schur functions and the skew Schur functions are symmetric functions, and the Schur functions form a vector space basis of the ring of symmetric functions in the variables $x_1,\ldots,x_N$. Thus the product of two Schur functions, which is itself a symmetric function, can be written as a sum of Schur functions
\begin{center}
$s_{\lambda}s_{\mu}=\displaystyle \sum_{\nu} c_{\lambda, \mu}^{\nu}s_{\nu}$
\end{center}
and this is one of many equivalent ways of defining the Littlewood-Richardson coefficients $c_{\lambda, \mu}^{\nu}$. Note that the above sum is over all partitions $\nu$ such that $|\nu|=|\lambda| + |\mu|$.

The Littlewood-Richardson coefficients are also critical in describing the expansion of the skew Schur functions $s_{\lambda / \mu}$ in terms of the Schur functions, namely
\begin{center}
$s_{\lambda / \mu}=\displaystyle \sum_{\nu} c_{\mu, \nu}^{\lambda}s_{\nu}$
\end{center}
and the above sum is over all partitions $\nu$ such that $|\nu|=|\lambda| - |\mu|$. In the special case where for a fixed skew Schur function $s_{\lambda / \mu}$ all the $c_{\mu, \nu}^{\lambda}$ are either 0 or 1 we say that $s_{\lambda / \mu}$ is multiplicity-free. The reason for this designation will become clear in Section \ref{subsec:schurWeylModules}.

We will need the following identity whose derivation can be found in \cite{MR1676282}.
\begin{equation}
\label{equation:skewSchurPi}s_{\lambda / \mu} = s_{(\lambda / \mu)^{\pi}}
\end{equation} 

\subsection{Schur and Weyl modules}
\label{subsec:schurWeylModules}
The skew Schur functions $s_{\lambda / \mu}$ are the characters of certain representations of $\GL_N$ for some $N \geq 1$, where the length of the first column of $\lambda$ does not exceed $N$ (cf. \cite{MR1153249}). Given a standard tableaux on the skew diagram $\lambda / \mu$ with the bound on the entries in the boxes equal to $d$, we may define two subgroups of the symmetric group $\mathfrak{S}_{d}$
\begin{center}
$Row^{\lambda / \mu}:=\{\sigma \in \mathfrak{S}_d \mid \sigma\textrm{ permutes the enties in each row among themselves}\}$
\end{center}
and
\begin{center}
$Col^{\lambda / \mu}:=\{\sigma \in \mathfrak{S}_d \mid \sigma\textrm{ permutes the entries in each column among themselves}\}$.
\end{center}

\noindent In the group algebra $\mathbb{C}[\mathfrak{S}_d]$ we introduce two elements, called the Young symmetrizers
\begin{center}
$\Upsilon_{W}^{\lambda / \mu}:=\displaystyle \sum_{\substack{\sigma \in Row^{\lambda / \mu} \\ \rho \in Col^{\lambda / \mu}}} sign(\rho)\sigma \rho$
\end{center}
and
\begin{center}
$\Upsilon_{S}^{\lambda / \mu}:=\displaystyle \sum_{\substack{\sigma \in Row^{\lambda / \mu} \\ \rho \in Col^{\lambda / \mu}}} sign(\rho)\rho \sigma$.
\end{center}

\noindent Let $V=\mathbb{C}^N$ with standard basis $\{e_1,\ldots,e_N\}$. The symmetric group $\mathfrak{S}_{d}$ acts on the $d$th tensor product $V^{\otimes d}$ on the right by permuting the factors, while $\GL_N$ acts on $V$ on the left and thus diagonally on $V^{\otimes d}$ on the left. The fact that this left action of $\GL_N$ commutes with the right action of $\mathfrak{S}_d$ is the source of Schur-Weyl duality and gives the relationship between the irreducible finite-dimensional representations of the general linear and symmetric groups.

The Schur Module $\mathbb{S}^{\lambda / \mu}(V)$ and Weyl Module $\mathbb{W}^{\lambda / \mu}(V)$ are defined to be
\begin{center}
$\mathbb{S}^{\lambda / \mu}(V):=(V^{\otimes d})\Upsilon_{S}^{\lambda / \mu}$
\end{center}
and
\begin{center}
$\mathbb{W}^{\lambda / \mu}(V):=(V^{\otimes d})\Upsilon_{W}^{\lambda / \mu}$.
\end{center}

These are $\GL_N$ representations spanned by all the young symmetrized tensors in $V^{\otimes d}$. In characteristic zero these representations are related by the identity $\mathbb{S}^{\lambda / \mu}(V)\cong \mathbb{W}^{\lambda' / \mu'}(V)$\cite[Prop. 2.1.18(c)]{MR1988690} and for our purposes it will prove to be convenient to focus on the Weyl Modules $\mathbb{W}^{\lambda / \mu}(V)$.

Given a tableaux $\tab$ of $\lambda / \mu$ numbered with $\{ 1,\ldots,N \}$ we can associate to $\tab$ a decomposable tensor 
\begin{center}
$e_{\tab}=\displaystyle \bigotimes_{i=1}^{\lambda_1}e_{\tab(-,i)}$
\end{center}
where $e_{\tab(-,i)}$ is the tensor product, in order, of those basis vectors whose indices appear in column $i$. We will write $\wmodel{\tab}{\lambda / \mu}$ for the element $e_{\tab} \Upsilon_{W}^{\lambda / \mu}$ of $\mathbb{W}^{\lambda / \mu}(V)$ and will omit the superscript and simply write $\wmodel{\tab}{}$ so long as no confusion will arise. Multilinearity implies that $\mathbb{W}^{\lambda / \mu}(V)$ is spanned by $\wmodel{\tab}{}$ as $\tab$ ranges over all tableau on $\lambda / \mu$. In fact we may do better, as the following theorem illustrates.

\begin{theorem}
\label{theorem:ssytBasisWeylModule}
The set $\{\wmodel{\tab}{} \mid \tab\textrm{ is a semistandard tableaux on }\lambda / \mu \}$ is a $\mathbb{C}$-basis for $\mathbb{W}^{\lambda / \mu}(V)$.
\end{theorem}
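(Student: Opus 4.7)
The plan is to combine a spanning argument, derived from the defining symmetries of the Young symmetrizer $\Upsilon_W^{\lambda/\mu}$, with a dimension count obtained from the character of $\mathbb{W}^{\lambda/\mu}(V)$. I would first note that each $\wmodel{\tab}{}$ is a $T$-weight vector: since $e_{\tab}$ is a product of basis vectors $e_i$ indexed by the entries of $\tab$, and $\Upsilon_W^{\lambda/\mu}$ is a $T$-equivariant operator, the weight of $\wmodel{\tab}{}$ is $\sum_i c_i(\tab)\epsilon_i$, where $c_i(\tab)$ is the number of $i$'s appearing in $\tab$.

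For the spanning step, I would start from the fact that $\mathbb{W}^{\lambda/\mu}(V)$ is, by definition, spanned by $\wmodel{\tab}{}$ as $\tab$ ranges over \emph{all} fillings of $\lambda/\mu$ by $\{1,\dots,N\}$. Two reductions cut this down to semistandard fillings. First, the column antisymmetrizer in $\Upsilon_W^{\lambda/\mu}$ forces $\wmodel{\tab}{} = 0$ whenever a column of $\tab$ has a repeated entry, and allows us to reorder columns so that entries strictly increase from top to bottom (at the cost of a sign). Second, to obtain weak increase along rows, I would invoke the Garnir-type relations: for any pair of adjacent columns and any choice of rows picking too many entries from the right column, one obtains a linear relation among $\wmodel{\tab}{}$'s that expresses a column-strict but non-semistandard filling as a $\mathbb{C}$-combination of fillings in which more entries have been ``pushed left.'' A straightening induction on a suitable ordering of fillings (for instance, the reverse lexicographic order on column-reading words) then shows that every column-strict $\wmodel{\tab}{}$ is a $\mathbb{C}$-linear combination of $\wmodel{\tab'}{}$ for semistandard $\tab'$.

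For linear independence, I would compare characters. Recall from Section~\ref{subsec:schurWeylModules} that the character of $\mathbb{W}^{\lambda/\mu}(V)$ as a $\GL_N$-module is the skew Schur function $s_{\lambda/\mu}$, and that by the combinatorial definition recalled in Section~\ref{subsec:youngDiagramsSSYT},
\begin{equation*}
s_{\lambda/\mu}(x_1,\dots,x_N) = \sum_{\tab \text{ SSYT on } \lambda/\mu} x_1^{c_1(\tab)}\cdots x_N^{c_N(\tab)}.
\end{equation*}
Fix a weight $\chi = \sum_i a_i \epsilon_i$. The spanning step shows that the weight-$\chi$ subspace $\mathbb{W}^{\lambda/\mu}(V)_\chi$ is spanned by those $\wmodel{\tab}{}$ with $\tab$ semistandard and $c_i(\tab) = a_i$, so its dimension is at most the number of such SSYT. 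But the character identity forces equality for every $\chi$. Consequently the spanning set $\{\wmodel{\tab}{} : \tab \text{ SSYT}\}$ is a basis weight space by weight space, hence a $\mathbb{C}$-basis of $\mathbb{W}^{\lambda/\mu}(V)$.

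The main obstacle is establishing the Garnir straightening in the skew setting and verifying that it terminates at semistandard fillings; once that combinatorial input is in place, the character argument is essentially automatic. An alternative route that would sidestep the direct Garnir bookkeeping is to invoke the conjugate identification $\mathbb{W}^{\lambda/\mu}(V) \cong \mathbb{S}^{\lambda'/\mu'}(V)$ cited in Section~\ref{subsec:schurWeylModules} and quote the analogous basis theorem for Schur modules, but this merely transports the same combinatorial content to column-strict tableaux on the conjugate shape.
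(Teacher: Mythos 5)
Your overall strategy --- spanning by Garnir straightening and a dimension count against $s_{\lambda/\mu}$ --- is the standard one outlined in the Fulton--Harris exercises that the paper cites, but the specific reductions you invoke in the spanning step are wrong for the Weyl module with the paper's conventions. Here $\Upsilon_W^{\lambda/\mu} = \bigl(\sum_{\sigma\in Row}\sigma\bigr)\bigl(\sum_{\rho\in Col}\mathrm{sign}(\rho)\,\rho\bigr)$ acts on $V^{\otimes d}$ on the right, so $\wmodel{\tab}{} = (e_{\tab}\cdot a)\cdot b$ with the row symmetrizer $a$ applied to $e_{\tab}$ \emph{first} and the column antisymmetrizer $b$ second. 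The relation actually available is $\sigma a = a$ for $\sigma\in Row$: hence $\wmodel{\tab}{}$ is invariant under row permutations of $\tab$, and one may assume rows are weakly increasing. Your two column claims --- that a repeated column entry forces $\wmodel{\tab}{}=0$, and that a column transposition produces a sign --- are false, because $b$ does not act directly on $e_{\tab}$. Concretely, for $\lambda=(2,1)$ and the filling $\tab$ with first column $(1,1)$ and first row $(1,2)$, a direct computation gives $\wmodel{\tab}{} = e_2\otimes e_1\otimes e_1 - e_1\otimes e_2\otimes e_1 \neq 0$; it equals $-\tfrac12\wmodel{\tab'}{}$ for the semistandard $\tab'$ with first row $(1,1)$ and second row $(2)$, so it straightens, but it is certainly not annihilated.

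The reductions you describe are the correct behaviour of the \emph{Schur} module $\mathbb{S}^{\lambda/\mu}(V) = V^{\otimes d}\Upsilon_S^{\lambda/\mu}$, where $b$ is applied first. So the clean fix is precisely the alternative you mention yourself: pass to $\mathbb{S}^{\lambda'/\mu'}(V)$ via the conjugation isomorphism and carry out the column-side reductions there, keeping track of the transposed shape and tableau type; or else rework the Garnir straightening in terms of the relations genuinely present in $\wmodel{\tab}{}$ (row-permutation invariance to force weak row increase, followed by the dual Garnir relations that impose column strictness, ordered by a suitable row-reading word). The character comparison you use for linear independence is sound once the spanning set is corrected, granting $\mathrm{char}\,\mathbb{W}^{\lambda/\mu}(V)=s_{\lambda/\mu}$ as the paper states with citation; that identity is established in Fulton--Harris independently of the tableau basis (via the Weyl character formula together with the Lindstr\"om--Gessel--Viennot identity equating the bialternant and tableau-generating-function forms of the Schur polynomial), so there is no circularity, but you should make that dependency explicit.
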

\begin{proof}
This is a well known result, and a sketch of the details may be found in \cite[Exercise~6.15~and~6.19]{MR1153249}
\end{proof}

Note that the above construction works for any Young diagram $\lambda$ merely by noting that $\lambda$ has the same diagram as the skew diagram $\lambda / \emptypart$. In fact the Weyl Module $\mathbb{W}^{\lambda}(V):=\mathbb{W}^{\lambda / \emptypart}(V)$ is an irreducible polynomial $\GL_N$ representation and any Weyl Module $\mathbb{W}^{\lambda / \mu}(V)$ can be written as a direct sum of these irreducible modules by
\begin{equation}
\label{equation:glnweylmoduledecomp}
\mathbb{W}^{\lambda / \mu}(V)\cong \displaystyle \bigoplus_{\nu} {\mathbb{W}^{\nu}(V)}^{\oplus c_{\mu, \nu}^{\lambda}}
\end{equation}
where the $c_{\lambda \mu}^{\nu}$ are the Littlewood-Richardson coefficients defined in Section \ref{subsec:youngDiagramsSSYT}, and the direct sum is over all partitions $\nu$ such that $|\nu|=|\lambda|-|\mu|$.

\begin{definition} 
\label{definition:determinantRep}
Let $r$ be a positive integer. Define the $\GL_N$-representation $det_N^{r}:\GL_N\rightarrow \mathbb{C}^{*}$, $det_N^{r}(g)=(det(g))^{r}$, $g\in \GL_N$. Then $det_N^{-r}$ is defined to be the dual of $det_{N}^{r}$. 
\end{definition}

Recall the following result due originally to Schur.
\begin{theorem}
\label{theorem:greenPolyCharIso}
Let $V$ and $W$ be two finite dimensional polynomial $\GL_N$-representations. Then $V$ and $W$ are isomorphic if and only if $char(V)=char(W)$.
\end{theorem}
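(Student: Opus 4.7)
The plan is to prove the two directions separately; the forward implication is immediate since $\mathrm{char}(V)$ is defined in terms of the multiplicities of the $T$-weight spaces of $V$, which depend only on the isomorphism class of $V$ as a $T$-module (and hence certainly only on its isomorphism class as a $\GL_N$-module).

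For the converse, the strategy is to reduce to the assertion that the characters of the irreducible polynomial representations are linearly independent. First I would invoke complete reducibility: since $\GL_N$ is reductive and we are working in characteristic zero, every finite-dimensional polynomial $\GL_N$-representation is a direct sum of irreducibles. By the classification recalled in Section \ref{subsec:schurWeylModules}, the irreducible polynomial $\GL_N$-representations are precisely the Weyl modules $\mathbb{W}^{\lambda}(\mathbb{C}^N)$ as $\lambda$ ranges over partitions with at most $N$ parts, and the character of $\mathbb{W}^{\lambda}(\mathbb{C}^N)$ with respect to the standard torus $T$ is the Schur polynomial $s_{\lambda}(x_1,\ldots,x_N)$ (here $x_i = e^{\epsilon_i}$). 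Thus we may write
\begin{equation*}
V \cong \bigoplus_{\lambda} \mathbb{W}^{\lambda}(\mathbb{C}^N)^{\oplus m_{\lambda}}, \qquad W \cong \bigoplus_{\lambda} \mathbb{W}^{\lambda}(\mathbb{C}^N)^{\oplus n_{\lambda}},
\end{equation*}
and consequently $\mathrm{char}(V) = \sum_{\lambda} m_{\lambda} s_{\lambda}$ and $\mathrm{char}(W) = \sum_{\lambda} n_{\lambda} s_{\lambda}$.

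The final step is to appeal to the fact that the Schur polynomials $\{s_{\lambda}(x_1,\ldots,x_N)\}$ indexed by partitions of length at most $N$ form a $\mathbb{Z}$-basis of the ring of symmetric polynomials in $x_1,\ldots,x_N$ (see \cite{MR1153249} or \cite{MR1676282}), so the hypothesis $\mathrm{char}(V) = \mathrm{char}(W)$ forces $m_{\lambda} = n_{\lambda}$ for every $\lambda$, from which $V \cong W$ follows.

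There is no real obstacle here: the result is classical and its proof is a straightforward combination of three ingredients — complete reducibility, the labeling of irreducible polynomial representations by partitions, and the linear independence of Schur polynomials — all of which are either standard in the references cited or implicit in Section \ref{subsec:schurWeylModules}. The cleanest presentation therefore consists of a brief sketch followed by a reference to, e.g., \cite{MR1153249}, rather than a self-contained argument.
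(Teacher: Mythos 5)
Your argument is correct and is the standard characteristic-zero proof. Note, however, that the paper does not actually give a proof of this statement: it simply cites \cite[Theorem 3.5 and the second remark following the proof]{MR2349209}, i.e.\ Green's monograph on polynomial representations of $\GL_n$. So there is no internal argument in the paper to compare against.

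It is worth flagging the difference in scope between your sketch and the cited source. Your route leans on complete reducibility, which is available because we are in characteristic zero; once you know every finite-dimensional polynomial representation is a direct sum of Weyl modules $\mathbb{W}^{\lambda}(\mathbb{C}^N)$ with characters $s_{\lambda}$, the linear independence of Schur polynomials does the rest. Green's proof, by contrast, is characteristic-free: it works over an arbitrary infinite field, where polynomial representations need not be semisimple, by passing through the Schur algebra $S(n,r)$ and arguing with composition factors rather than direct summands. For the purposes of this paper, which works exclusively over $\mathbb{C}$, your more elementary argument is entirely adequate and arguably preferable as a self-contained justification; Green's version buys generality that is not needed here. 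One small point of care: when you say the irreducible polynomial representations ``are precisely the Weyl modules $\mathbb{W}^{\lambda}(\mathbb{C}^N)$,'' you should restrict $\lambda$ to partitions of length at most $N$, as you do later, since $\mathbb{W}^{\lambda}(\mathbb{C}^N)=0$ when $\lambda$ has more than $N$ parts. With that caveat the proof is complete.
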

\begin{proof}
For a proof of this refer to \cite[Theorem 3.5 and the second remark following the proof]{MR2349209}.
\end{proof}
\begin{corollary}
\label{corollary:ratCharIso}
Let $V$ and $W$ be two finite dimensional rational $\GL_N$-representations. Then $V$ and $W$ are isomorphic if and only if $char(V)=char(W)$.
\end{corollary}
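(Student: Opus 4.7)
The plan is to reduce the rational case to the polynomial case established in Theorem \ref{theorem:greenPolyCharIso} by twisting both representations with a sufficiently high power of the determinant representation. The "only if" direction is immediate, since isomorphic representations have identical characters, so the work is entirely in the converse.

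For the converse, I would start with finite dimensional rational $\GL_N$-representations $V$ and $W$ satisfying $char(V)=char(W)$, and observe that since both are finite dimensional, only finitely many $T$-weights occur in $V$ (and hence the same finite set occurs in $W$). I would then choose a positive integer $r$ large enough that, for every weight $(a_1,\ldots,a_N)$ appearing in $V$, the shifted tuple $(a_1+r,\ldots,a_N+r)$ has all non-negative entries.

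Next I would pass to the twisted representations $V\otimes det_N^{r}$ and $W\otimes det_N^{r}$. Tensoring with $det_N^{r}$ shifts every $T$-weight uniformly by $(r,\ldots,r)$, so by the choice of $r$ both twisted representations have only weights with non-negative entries and are therefore polynomial $\GL_N$-representations. Multiplicativity of characters under tensor products immediately gives $char(V\otimes det_N^{r})=char(W\otimes det_N^{r})$, and Theorem \ref{theorem:greenPolyCharIso} then yields $V\otimes det_N^{r}\cong W\otimes det_N^{r}$ as $\GL_N$-modules. Tensoring both sides with the one-dimensional representation $det_N^{-r}$ recovers $V\cong W$, completing the argument.

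The only genuine obstacle is justifying that a finite dimensional rational $\GL_N$-representation whose $T$-weights all have non-negative entries is polynomial. This is a standard fact, following from the classification (recalled in Section \ref{subsec:glN}) of irreducible rational representations by weakly dominant weights together with the observation that an irreducible rational representation is polynomial precisely when its highest weight has last entry $\geq 0$, combined with complete reducibility of finite dimensional rational $\GL_N$-representations. Once this is granted, the twisting argument is routine, and no other subtlety arises.
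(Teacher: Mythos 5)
Your proposal is correct and follows the same strategy as the paper's proof, which simply says to tensor both representations by a sufficient power of the determinant to reduce to the polynomial case of Theorem \ref{theorem:greenPolyCharIso}. You have merely unpacked the details that the paper leaves implicit: the finiteness of the weight support, the choice of $r$, the multiplicativity of characters, and the justification (via the weight classification of irreducibles recalled in Section \ref{subsec:glN} together with complete reducibility) that the twisted representations are indeed polynomial.
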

\begin{proof}
Tensoring $V$ and $W$ by a sufficient power of the $\GL_N$ determinant representation reduces this to the polynomial $\GL_N$-representation case.
\end{proof}

We have the following isomorphism of $\GL_N$-modules by \eqref{equation:skewSchurPi} and Theorem \ref{theorem:greenPolyCharIso}
\begin{equation}
\label{equation:dualofweylmodule}
\mathbb{W}^{(m^n)/\lambda}(\mathbb{C}^N) \cong \mathbb{W}^{((m^n)/\lambda)^{\pi}}(\mathbb{C}^N)
\end{equation}
for $m$, $n$ two positive integers with $m < N$. By Remark \ref{remark:pirotpartition} these are irreducible $\GL_N$-modules.

\section{Decomposition results}
\label{sec:Main}

The ultimate goal of this section is a combinatorial description of the decomposition of the homogeneous coordinate ring of a Schubert variety into irreducible modules for the action of the Levi subgroups of certain parabolic subgroups of $\GL_N$.

\subsection{Blocks, heads, and partitions in degree 1}
\label{subsec:BlockHeadClassPartitionDeg1}
\numberwithin{equation}{subsection}

Let $P=P_{d}$ and $w:=(\ell_1,\ldots,\ell_{d})\in W^{P}$. Then $X(w)$ is a Schubert variety in $\Grass_{d,N}$. Let $Q_w$ be the stabilizer of $X(w)$ in $\GL_N$ for the action of left multiplication. Throughout this paper when we discuss the stabilizers of Schubert varieties in $\GL_N$ it will always be for the action of left multiplication. 
\begin{proposition}
\label{proposition:FormOfStabw}
Define 
\begin{center}
$\widehat{R}_{Q_w} \coloneqq \Set{n\in\left\{1,\ldots,N-1\right\}\mid \exists m\text{ with } n=\ell_m\text{ and }\ell_m + 1\neq \ell_{m+1}}$.
\end{center}
Then $Q_w=P_{R_{Q_w}}$ where $R_{Q_w}=\left\{1,\ldots,N-1\right\}\setminus \widehat{R}_{Q_w}$.
\end{proposition}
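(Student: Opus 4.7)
The plan is to identify $Q_w$ as a standard parabolic and then determine exactly which simple reflections it contains. Since $X(w)=\overline{B[e_w]}$ is $B$-stable by construction, $Q_w\supseteq B$, so by Remark \ref{remark:formofParabolics} we have $Q_w=P_A$ for a unique $A\subseteq\{1,\ldots,N-1\}$. Moreover $P_A$ is generated by $B$ together with the simple reflections $\{s_{\alpha_n}\mid n\in A\}$, so $A$ is precisely the set of indices $n$ for which $s_{\alpha_n}\in Q_w$. Thus the proposition reduces to showing that $s_{\alpha_n}$ stabilizes $X(w)$ if and only if $n\notin\widehat{R}_{Q_w}$.

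Using the Bruhat decomposition $X(w)=\bigcup_{\tau\leq w}B[e_\tau]$ together with the fact that $s_{\alpha_n}$ normalizes $T$ and permutes $T$-fixed points, this translates to the purely combinatorial condition that $s_{\alpha_n}(\tau)\leq w$ for every $\tau\leq w$, where $s_{\alpha_n}$ acts on a tuple $\tau=(i_1,\ldots,i_d)\in I_{d,N}$ by swapping $n$ and $n+1$ and re-sorting. This action is trivial unless exactly one of $n, n+1$ appears in $\tau$; and if $n+1\in\tau$, $n\notin\tau$ then the swap strictly decreases $\tau$ in Bruhat order, so it causes no trouble. The only real obligation is: for every $\tau\leq w$ with $n=i_{m_\tau}$ and $n+1\notin\tau$, one needs $n+1\leq\ell_{m_\tau}$.

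The forward direction is then immediate. If $n\in\widehat{R}_{Q_w}$, then $n=\ell_{m_w}$ for some $m_w$ and $n+1\notin w$, so taking $\tau=w$ the condition fails because $\ell_{m_\tau}=\ell_{m_w}=n<n+1$. For the reverse direction, assume $n\in R_{Q_w}$ and take any $\tau\leq w$ with $n\in\tau$, $n+1\notin\tau$. I will split into two cases: (i) $n\notin w$, in which case $\ell_{m_\tau}\geq n$ together with $\ell_{m_\tau}\neq n$ gives $\ell_{m_\tau}\geq n+1$; (ii) both $n$ and $n+1$ lie in $w$, necessarily at consecutive positions $m_w$ and $m_w+1$. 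In case (ii) the key move is to rule out $m_\tau=m_w$ via a gap argument: strict monotonicity of $\tau$ combined with $n+1\notin\tau$ forces $i_{m_\tau+1}\geq n+2$, while $\tau\leq w$ forces $i_{m_\tau+1}\leq\ell_{m_w+1}=n+1$, a contradiction. The remaining possibilities $m_\tau>m_w$ and $m_\tau<m_w$ then deliver respectively $\ell_{m_\tau}\geq\ell_{m_w+1}=n+1$ (done) and a direct violation of $\tau\leq w$ (impossible).

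The main obstacle will be organizing case (ii) cleanly: one must keep straight the position of $n$ inside $\tau$ versus its position inside $w$, and exploit both the Bruhat inequality $\tau\leq w$ and the strict monotonicity of tuples (together with the hypothesis $n+1\notin\tau$) to squeeze out the needed estimates. All other steps, including the identification $Q_w=P_A$ and the reduction to checking $T$-fixed points, are formal.
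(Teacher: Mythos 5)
Your proof is correct but takes a genuinely different route from the paper's. The paper disposes of this in one line by citing the known result that the stabilizer of a Schubert variety is the standard parabolic $P_R$ with $R=\{m : s_{\alpha_m}w\leq w\}$, and the explicit description of $\widehat{R}_{Q_w}$ is then just an unwinding of that condition. You instead give a self-contained argument: identify $Q_w$ as a standard parabolic via $B\subseteq Q_w$, reduce to determining which simple reflections stabilize $X(w)$, translate that via the Bruhat decomposition and the $T$-fixed-point calculus into the condition $s_{\alpha_n}\tau\leq w$ for all $\tau\leq w$, and then verify that condition combinatorially. Your combinatorial verification is sound (the gap argument ruling out $m_\tau=m_w$ and the monotonicity contradiction when $m_\tau<m_w$ both check out), and it effectively re-proves in the Grassmannian setting the key fact underlying the cited theorem, namely that $s_{\alpha_n}w\leq w$ forces $s_{\alpha_n}\tau\leq w$ for every $\tau\leq w$. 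The paper's approach is shorter by deferring to the literature; yours is more transparent and yields the combinatorics of $\widehat{R}_{Q_w}$ directly. Two points worth making explicit so the reduction is watertight: first, the backward direction (``all $s_{\alpha_n}\tau\leq w$ implies $s_{\alpha_n}$ stabilizes $X(w)$'') is not purely formal but relies on the BN-pair inclusion $s_{\alpha_n}B\tau P\subseteq B s_{\alpha_n}\tau P\cup B\tau P$, giving $s_{\alpha_n}C_\tau\subseteq C_{s_{\alpha_n}\tau}\cup C_\tau\subseteq X(w)$; second, your case split in the reverse direction omits stating that the third possibility $n\in w$, $n+1\notin w$ is excluded precisely because $n\in R_{Q_w}$ rules it out, which is what makes cases (i) and (ii) exhaustive. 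Neither is a real gap, but spelling them out completes the argument.
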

\begin{proof}
This is immediate from the fact that $R_{Q_w}=\{ m \in \{1,\ldots,N-1\} \mid s_{\alpha_m} w \leq w \}$\cite{MR0354698}.
\end{proof}

Let $Q$ be a parabolic subgroup of $\GL_N$ that is a subgroup of $Q_w$, then we have $Q=P_{R_Q}$ for some $R_Q\subseteq R_{Q_w}$(cf. Remark \ref{remark:formofParabolics}). Our main group of interest will be the reductive group $L$, defined as the Levi part of $Q$, and its Lie algebra $\mathfrak{l}=Lie(L)$. The group $L$ acts on $X(w)$ by left multiplication and this induces an action on the coordinate ring $\mathbb{C}[X(w)]$. This in turn induces an action of $\mathfrak{l}:=Lie(L)$ on $\mathbb{C}[X(w)]$. We explore this induced action in greater depth in Section \ref{subsec:PartialOrderLieLaction}. 

\begin{notation} 
\label{notation:nBlocksPart}
We establish some notation that will be used for the rest of the paper. Define $H_w\coloneqq \left\{\tau \in W^{P}\mid\tau \leq w\right\}$.  Let $\widehat{R}_Q:=\{1,\ldots,N-1\}\setminus R_Q$, and then let $\dprl{L}:=|\widehat{R}_Q|+1$. The set $\widehat{R}_Q$ can be written uniquely as the ascending sequence $(a_1,\ldots,a_{\dprl{L}-1})$. Set $a_{0}=0$ and $a_{\dprl{L}}=N$.
\end{notation}

\begin{definition}
\label{definition:Block} 
We may partition $\{1,\ldots,N\}$ into subsets $\Bl_{L,k} \coloneqq \left\{a_{k-1}+1,\ldots,a_{k}\right\}$ for $1\leq k\leq \dprl{L}$. We will refer to these as the \emph{blocks} of $L$. Let $N_k:=|\Bl_{L,k}|=a_{k}-a_{k-1}$. Thus $\dprl{L}$ is the number of blocks of $L$.
\end{definition}

\begin{remark}
\label{remark:formofL}
These blocks are closely related to the form of $L$ and $\mathfrak{l}$. In particular, $L=GL_{N_1}\times \cdots \times GL_{N_{\dprl{L}}}$ and $\mathfrak{l}=\mathfrak{gl}_{N_1}\times \cdots \times \mathfrak{gl}_{N_{\dprl{L}}}$. Thus our decomposition of the $L$-module $\mathbb{C}[X(w)]$ into irreducible $L$-modules will be in terms of tensor products of Weyl modules associated to the $\GL_{N_{i}}$.
\end{remark}

\begin{definition}
\label{definition:Class} 
Given an element $\tau=(i_1,\ldots,i_d)\in W^{P}$, we define the \emph{class} of $\tau$ as the sequence $\Cl_{\tau}:=(u_1,\ldots,u_{d})$ where each $u_n$ is equal to the unique $k$ such that $i_n \in \Bl_{L,k}$.
\end{definition}

\begin{proposition}
\label{proposition:HeadTypeW} 
Let $\tau \in H_w$. The following properties of $\tau$ are equivalent.
\begin{enumerate}[label=(\roman*)]
\item The subvariety $X(\tau)$ is $L$-stable.
\item The $wt(\pl_{\tau})$ is weakly $L$-dominant.
\item For all $1\leq k\leq \dprl{L}$ we have $\tau\bigcap \Bl_{L,k}$ is either empty or is a maximal collection of elements from $\Bl_{L,k}$; explicitly for all $m \in \tau\bigcap \Bl_{L,k}$ and all $n \in \Bl_{L,k} \setminus \left\{\tau\bigcap \Bl_{L,k}\right\}$ we have $m>n$.
\end{enumerate}
\end{proposition}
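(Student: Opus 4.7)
The plan is to prove the three-way equivalence by establishing (ii)$\iff$(iii) directly from the explicit weight formula, then (i)$\iff$(iii) by reducing $L$-stability to a parabolic containment and translating it into the combinatorial condition.

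For (ii)$\iff$(iii), I would simply unfold Remark 2.3.2. The weight $\chi_{\tau}=wt(\pl_{\tau})$ is the $\{0,-1\}$-valued sequence taking the value $-1$ at positions in $\tau$ and $0$ elsewhere. Since $L=GL_{N_{1}}\times\cdots\times GL_{N_{\dprl{L}}}$ by Remark 3.1.3, weak $L$-dominance of $\chi_{\tau}$ is by definition the assertion that, for each block $\Bl_{L,k}$, the restriction of $\chi_{\tau}$ to $\Bl_{L,k}$ is weakly decreasing. For a $\{0,-1\}$-sequence this is equivalent to having all $0$'s precede all $-1$'s within the block, i.e., whenever $\tau\cap\Bl_{L,k}$ is nonempty, every $m\in\tau\cap\Bl_{L,k}$ exceeds every $n\in\Bl_{L,k}\setminus\tau$. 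That is exactly (iii).

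For (i)$\iff$(iii), I begin with the observation that (i) is equivalent to the inclusion $L\subseteq Q_{\tau}$, since $Q_{\tau}$ is by definition the stabilizer of $X(\tau)$. Using Proposition 3.1.1 together with Remark 2.1.4, $L$ sits in $\GL_{N}$ as block-diagonal matrices with block boundaries at $\widehat{R}_{Q}=\{a_{1},\ldots,a_{\dprl{L}-1}\}$, while $Q_{\tau}$ consists of block-upper-triangular matrices with boundaries at $\widehat{R}_{Q_{\tau}}$. A block-diagonal subgroup is contained in a block-upper-triangular one precisely when every boundary of the latter is also a boundary of the former, so $L\subseteq Q_{\tau}\iff \widehat{R}_{Q_{\tau}}\subseteq\widehat{R}_{Q}$.

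It then remains to translate this inclusion into (iii). By Proposition 3.1.1, $n\in\widehat{R}_{Q_{\tau}}$ precisely when $n\in\tau$, $n+1\notin\tau$, and $n$ lies below the top entry of $\tau$ (equivalently $n<N$ under the natural convention $\ell_{d+1}=N+1$). Given (iii), any such $n$ lies in some $\Bl_{L,k}$; if $n<a_{k}$, then $n+1\in\Bl_{L,k}$ and the upper-tail property of $\tau\cap\Bl_{L,k}$ would force $n+1\in\tau$, a contradiction. Hence $n=a_{k}\in\widehat{R}_{Q}$. Conversely, if (iii) fails for some block $\Bl_{L,k}$, there exist $m\in\tau\cap\Bl_{L,k}$ and $m'\in\Bl_{L,k}\setminus\tau$ with $m<m'$, and walking from $m$ upward to $m'$ along $\Bl_{L,k}$ produces some $n\in\tau\cap\Bl_{L,k}$ with $n+1\in\Bl_{L,k}\setminus\tau$; this $n$ satisfies $n\in\widehat{R}_{Q_{\tau}}$ but $n\in\{a_{k-1}+1,\ldots,a_{k}-1\}$, which is disjoint from $\widehat{R}_{Q}$. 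The main obstacle, though minor, is the boundary bookkeeping in Proposition 3.1.1: one must handle the top entry of $\tau$ carefully, and verify that the witness $n$ found in the converse direction genuinely lies in the interior of its block rather than at a boundary.
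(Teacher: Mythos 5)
Your proof is correct and follows essentially the same route as the paper. For (ii)$\iff$(iii) both arguments just unfold the explicit $\{0,-1\}$ weight $\chi_\tau$ of $\pl_\tau$ and read off weak $L$-dominance blockwise. For (i)$\iff$(iii) the paper reduces $L$-stability to $Q\subseteq Q_\tau$, equivalently $R_Q\subseteq R_{Q_\tau}$, and then argues directly with simple reflections; you reduce to $L\subseteq Q_\tau$, equivalently $\widehat{R}_{Q_\tau}\subseteq\widehat{R}_Q$, which is the same containment stated in complemented form. The small differences are that you spell out the ``walking'' step (finding an adjacent pair $n\in\tau$, $n+1\notin\tau$ interior to the block) which the paper leaves implicit when it jumps from failure of (iii) to the existence of such a consecutive pair, and you flag the boundary convention in Proposition \ref{proposition:FormOfStabw} for $m=d$, which the paper handles silently. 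These are points of explicitness rather than of substance; the two proofs are the same.
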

\begin{proof}
Let $Q_{\tau}$ be the stabilizer of $X(\tau)$.  The $L$-stability of $X(\tau)$ is equivalent to $Q \subseteq Q_{\tau}$, or that $R_Q \subseteq R_{Q_{\tau}}$. 

\noindent $\textit{(iii)} \Rightarrow \textit{(i)}:$ Suppose $\textit{(iii)}$ holds for $\tau$. We have $Q_{\tau}=P_{R_{Q_{\tau}}}$ where 
\begin{center}
$R_{Q_{\tau}}=\{ m \in \left\{1,\ldots,N-1\right\} \mid s_{\alpha_m} \tau \leq \tau \}$.
\end{center}
Suppose $m \in \widehat{R}_{Q_{\tau}}$, then $s_{\alpha_m} \tau > \tau$. The description of $\tau$ given in $\textit{(iii)}$ implies that $m=a_k$ for some $1\leq k < \dprl{L}$. Thus $m \in \widehat{R}_Q$, which implies $\widehat{R}_{Q_{\tau}} \subseteq \widehat{R}_Q$. It follows that $R_Q \subseteq R_{Q_{\tau}}$.

\noindent $\textit{(i)} \Rightarrow \textit{(iii)}:$ Suppose $\textit{(iii)}$ does not hold for $\tau$. This implies there is a block, say $\Bl_{L,k}$, such that there is a $m \in \tau \bigcap \Bl_{L,k}$ with $m+1 \in \Bl_{L,k} \setminus \left\{\tau\bigcap \Bl_{L,k}\right\}$. Then $m \in \tau$ and $m+1 \notin \tau$. This implies that $s_{\alpha_m} \tau > \tau$ and thus $m \notin  R_{Q_{\tau}}$.  Since $m$ and $m+1$ are both in $\Bl_{L,k}$ we have that $m$ is not the maximal element in the block and thus $m \in R_Q$. Thus $R_Q \not\subseteq R_{Q_{\tau}}$ and hence $Q \not\subseteq Q_{\tau}$.

\noindent $\textit{(iii)} \Rightarrow \textit{(ii)}:$ As discussed in Remark \ref{remark:weightPlucker} we have that the weight of $\pl_{\tau}$ is given in the $\epsilon $-basis by the sequence $\chi_{\tau}:=(\chi_1,\ldots,\chi_{N})$ where
\begin{center}
$\chi_i \coloneqq \left\{
\begin{array}{ll}
      -1 & i\in \tau\\
      0 & i\notin \tau 
\end{array} 
\right.$
\end{center}
for all $1\leq i \leq N$.

Then by Remark \ref{remark:formofL} and Remark \ref{remark:dominantWeightProduct} we have that  $\chi_{\tau}$ is weakly $L$-dominant if and only if when we partition the sequence $(\chi_1,\ldots,\chi_{N})$ into subsequences $\chi_{\tau}^{(k)} \coloneqq (\chi_{a_{k-1}+1},\ldots,\chi_{a_{k}})$ for $1\leq k\leq \dprl{L}$, each sequence $\chi_{\tau}^{(k)}$ is non-increasing. Since $\textit{(iii)}$ holds for $\tau$ we have that each $\chi_{\tau}^{(k)}$ is of the form $(0,\ldots,0,-1,\ldots,-1)$ which is non-increasing. Thus $\chi_{\tau}$ is weakly $L$-dominant.

\noindent $\textit{(ii)} \Rightarrow \textit{(iii)}:$ Suppose $\textit{(iii)}$ does not hold for $\tau$. Then there is a block, say $\Bl_{L,k}$, such that there is a $m \in \tau\bigcap \Bl_{L,k}$ with $m+1 \in \Bl_{L,k} \setminus \left\{\tau\bigcap \Bl_{L,k}\right\}$. But then $\chi_{\tau}^{(k)}$ is not non-increasing since $m,m+1 \in \left\{a_{k-1}+1,\ldots,a_{k}\right\}$ and $\chi_{m}=-1$ and $\chi_{m+1}=0$. Hence $\chi_{\tau}$ is not weakly $L$-dominant.
\end{proof}
\begin{definition}
\label{definition:HeadTypeW}
Let $\tau \in H_w$. If any of the three equivalent properties from Proposition \ref{proposition:HeadTypeW} hold for $\tau$ we call $\tau$ a \emph{head of type} $L$. And we define 
\begin{center}
$\He_L \coloneqq \left\{\tau \in H_w\mid \tau \text{ is a head of type }L \right\}$.
\end{center}
\end{definition}

\begin{example}
\label{example:blocks}
Let $d=3$ and $N=9$. Consider $w=(3,6,9)\in W^{P_{3}}$. Then $X(w)$ is a Schubert variety in $\Grass_{3,9}$. In this case $\widehat{R}_{Q_w} = \{3,6\}$ and $R_{Q_w}=\{1,2,4,5,7,8\}$. Choose $R_Q=R_{Q_w}$ for the parabolic subgroup $Q=P_{R_Q}$. Then $(a_0,a_1,a_2,a_3)=\{0,3,6,9\}$. So $\Bl_{L,1}=(1,2,3)$, $\Bl_{L,2}=(4,5,6)$, and $\Bl_{L,3}=(7,8,9)$. Then
\begin{center} 
$\He_{L}=\{(1,2,3),(2,3,6),(2,3,9),(3,5,6),(3,6,9)\}$.\end{center}
The head $(2,3,6)$ has $\Cl_{(2,3,6)}=(1,1,2)$.
\end{example}

We now prove a handful of technical lemmas relating to heads of type $L$, blocks of $L$, and classes that will prove useful throughout this section. Our first goal will be to describe a particular partition, that depends on $L$, of the Hasse diagram for $H_w$ into disjoint subdiagrams. This partition will turn out to influence the decomposition of $\mathbb{C}[X(w)]$ into irreducible $L$-modules. 

\begin{lemma}
\label{lemma:classGeqHeadGeq}
Let $\theta_1:=(x_1,\ldots,x_d), \theta_2:=(y_1,\ldots,y_d) \in \He_L$. Let $\Cl_{\theta_1}:=(t_1,\ldots,t_d)$ and $\Cl_{\theta_2}:=(u_1,\ldots,u_d)$. If $t_j \geq u_j$ for all $1 \leq j \leq d$, then $\theta_1 \geq \theta_2$.
\end{lemma}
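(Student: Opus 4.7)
The plan is to verify the explicit Bruhat criterion $y_j \leq x_j$ for every $1 \leq j \leq d$, which by the identification of $W^{P}$ with $I_{d,N}$ immediately yields $\theta_1 \geq \theta_2$.

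First I would establish a closed formula for the entries of any head of type $L$: if $\theta=(z_1,\ldots,z_d)\in\He_L$ and $z_j\in\Bl_{L,k}$, then
\[
z_j \;=\; a_k - J_\theta(k) + j,
\]
where $J_\theta(k)$ denotes the total number of entries of $\theta$ belonging to blocks of index at most $k$. This follows because, by the maximality condition of Proposition \ref{proposition:HeadTypeW}(iii), the entries of $\theta$ lying in $\Bl_{L,k}$ are exactly the top $m_k = J_\theta(k)-J_\theta(k-1)$ elements $\{a_k-m_k+1,\ldots,a_k\}$ of $\Bl_{L,k}$; since the $z_{j'}$ are strictly increasing, these occupy the consecutive positions $J_\theta(k-1)+1,\ldots,J_\theta(k)$, and direct substitution gives the formula.

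With the formula in hand I would split into two cases. If $t_j > u_j$, then $x_j\in\Bl_{L,t_j}$ while $y_j\in\Bl_{L,u_j}$; since $t_j\geq u_j+1$ and the blocks are the consecutive intervals $\{a_{k-1}+1,\ldots,a_k\}$, every element of $\Bl_{L,t_j}$ strictly exceeds every element of $\Bl_{L,u_j}$, so $x_j > y_j$. If instead $t_j = u_j =: k$, the formula gives $x_j - y_j = J_{\theta_2}(k) - J_{\theta_1}(k)$, so it suffices to show $J_{\theta_1}(k)\leq J_{\theta_2}(k)$. This is an immediate set inclusion: any $j'$ with $t_{j'}\leq k$ satisfies $u_{j'}\leq t_{j'}\leq k$ by hypothesis, so $\{j':t_{j'}\leq k\}\subseteq\{j':u_{j'}\leq k\}$, and the inequality follows by taking cardinalities.

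The only substantive step is deriving the closed formula for a head's entries from its class sequence; once that is set up, the case analysis is routine bookkeeping and I anticipate no genuine obstacle.
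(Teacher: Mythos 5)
Your proof is correct, and it takes a genuinely different route from the paper's. The paper argues by contradiction: it assumes $\theta_1 \not\geq \theta_2$, picks an index $n$ with $y_n > x_n$, shows that $t_n$ must equal $u_n$, and then compares the lengths of the maximal runs of indices sharing the block value $t_n$ in $\Cl_{\theta_1}$ and $\Cl_{\theta_2}$ to contradict $y_n > x_n$ via the maximality condition of Proposition \ref{proposition:HeadTypeW}(iii). Your proof instead derives a clean closed formula $z_j = a_k - J_\theta(k) + j$ for the $j$th entry of any head $\theta$ in terms of the cumulative block counts $J_\theta(k)$, and then verifies $x_j \geq y_j$ directly in two easy cases. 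Both proofs rest on the same structural fact from Proposition \ref{proposition:HeadTypeW}(iii), but yours packages it into a reusable formula and replaces the contradiction argument with a straightforward monotone set inclusion $\{j' : t_{j'} \leq k\} \subseteq \{j' : u_{j'} \leq k\}$. The payoff of your approach is that the formula makes the bijection between heads and their class sequences completely explicit (which could streamline the proof of Lemma \ref{lemma:HeadClassUnique} as well); the payoff of the paper's approach is that it stays purely combinatorial on the index level without needing the arithmetic of $a_k$ and $J_\theta$.
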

\begin{proof}
Suppose that $\theta_1 \not\geq \theta_2$. This implies there must be an index $n$ such that $y_{n}>x_{n}$. Our hypothesis implies $t_{n} \geq u_{n}$, however, if $t_{n}> u_{n}$ then $x_{n}>y_{n}$, hence $t_{n}= u_{n}$.  

Let $p$ be the maximum integer such that $t_{p}= t_{n}$, and $q$ the maximum integer such that $u_{q}= u_{n}$. We know $q \geq p$, otherwise $p > q$ would imply $u_{p} > t_{p}$, which is a contradiction of our hypothesis.

But then $x_{n},\ldots,x_{p}$ and $y_{n},\ldots,y_{q}$ are both maximal sequences in $\Bl_{L,t_{n}}$ by Proposition \ref{proposition:HeadTypeW}(iii). However, the length of the sequence $y_{n},\ldots,y_{q}$ is longer or equal to $x_{n},\ldots,x_{p}$, so $x_{n}\geq y_{n}$. This is a contradiction as $n$ was chosen to be the index where $y_{n}>x_{n}$. Thus it must be the case that $\theta_1 \geq \theta_2$.
\end{proof}

\begin{lemma}
\label{lemma:HeadClassUnique}
Let $\theta_1$, $\theta_2 \in \He_L$. Then $\theta_1 = \theta_2$ if and only if $\Cl_{\theta_1} = \Cl_{\theta_2}$.
\end{lemma}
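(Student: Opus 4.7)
The forward direction is immediate from the definitions: if $\theta_1 = \theta_2$ as $d$-tuples, then in particular the index $k$ such that the $n$-th entry lies in $\Bl_{L,k}$ agrees for both, so $\Cl_{\theta_1} = \Cl_{\theta_2}$.

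For the reverse direction, the plan is simply to invoke the preceding Lemma \ref{lemma:classGeqHeadGeq} twice. Suppose $\Cl_{\theta_1} = \Cl_{\theta_2}$. Writing these classes as $(t_1,\ldots,t_d)$ and $(u_1,\ldots,u_d)$ respectively, we have $t_j = u_j$ for all $1\leq j\leq d$. In particular both $t_j \geq u_j$ and $u_j \geq t_j$ hold for all $j$. Applying Lemma \ref{lemma:classGeqHeadGeq} to the first set of inequalities yields $\theta_1 \geq \theta_2$, and applying the lemma to the second yields $\theta_2 \geq \theta_1$. Since the Bruhat order on $W^P$ is a partial order and thus antisymmetric, we conclude $\theta_1 = \theta_2$.

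There is no real obstacle here; the statement is a direct corollary of Lemma \ref{lemma:classGeqHeadGeq}, which already contains the essential combinatorial content (that the class of a head of type $L$ determines, via the maximality property in Proposition \ref{proposition:HeadTypeW}(iii), exactly which entries from each block $\Bl_{L,k}$ appear). The role of this lemma in the paper is to guarantee that the map $\He_L \to $ (sequences of block indices), $\theta \mapsto \Cl_\theta$, is injective, which will later allow $\He_L$ to index the partition of the Hasse diagram of $H_w$ cleanly.
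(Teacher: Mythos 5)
Your proof is correct, and it takes a genuinely different route from the paper's. The paper proves the non-trivial direction directly: from $\Cl_{\theta_1} = \Cl_{\theta_2}$ it deduces $|\theta_1 \cap \Bl_{L,k}| = |\theta_2 \cap \Bl_{L,k}|$ for each block, then invokes the maximality characterization of heads (Proposition~\ref{proposition:HeadTypeW}(iii)) to conclude $\theta_1 \cap \Bl_{L,k} = \theta_2 \cap \Bl_{L,k}$ for every $k$, and hence $\theta_1 = \theta_2$ because the blocks partition $\{1,\ldots,N\}$. You instead treat the statement as a corollary of the preceding Lemma~\ref{lemma:classGeqHeadGeq}: equality of classes gives componentwise $\geq$ in both directions, and two applications of that lemma together with antisymmetry of the Bruhat order yield $\theta_1 = \theta_2$. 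Both arguments are short and correct. Your route is slightly slicker in that it reuses an already-proven lemma and leans on a general order-theoretic fact (antisymmetry), whereas the paper's argument is self-contained at the level of block combinatorics and does not depend on Lemma~\ref{lemma:classGeqHeadGeq} at all --- a modest advantage if one wanted to reorder the lemmas or extract the statement independently. Neither approach buys anything substantive over the other; the underlying combinatorial content (that the maximality property in each block makes the class determine the head) is the same in both.
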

\begin{proof}
$(\Leftarrow )$ Suppose $\Cl_{\theta_1} = \Cl_{\theta_2}$. This would imply that $| \theta_1 \bigcap \Bl_{L,k} |=| \theta_2 \bigcap \Bl_{L,k} |$ for all $1\leq k\leq \dprl{L}$. Since each of these is a head, $\theta_i \bigcap \Bl_{L,k}$ for $i=1,2$ must be a maximal collection of elements from $\Bl_{L,k}$. These two results imply that $\theta_1 \bigcap \Bl_{L,k} = \theta_2 \bigcap \Bl_{L,k}$ for all $1\leq k\leq \dprl{L}$. But this, combined with the fact that the blocks of type $L$ partition $\left\{1,\ldots,N\right\}$, implies that $\theta_1 = \theta_2$. 

\noindent $(\Rightarrow )$ Suppose $\Cl_{\theta_1} \neq \Cl_{\theta_2}$. Then for some block, say $\Bl_{L,k}$, we have $| \theta_1 \bigcap \Bl_{L,k} | \neq | \theta_2 \bigcap \Bl_{L,k} |$. But this implies $\theta_1 \neq \theta_2$.
\end{proof}
\begin{lemma}
\label{lemma:ClassUnchangedR_Q}
Let $m\in R_Q$ and $\tau=(i_1,\ldots,i_d) \in H_w$. Then $\Cl_{\tau}=\Cl_{s_{\alpha_m}\tau}$.
\end{lemma}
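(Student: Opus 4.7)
The plan is to unpack the definition of $\Cl_\tau$ and observe that $\Cl_\tau$ depends only on which block each entry of $\tau$ lies in, together with the key fact that the hypothesis $m\in R_Q$ forces $m$ and $m+1$ to lie in the \emph{same} block of $L$.

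First I would verify the block claim. By Notation \ref{notation:nBlocksPart}, $\widehat{R}_Q=\{a_1,\ldots,a_{\dprl{L}-1}\}$ is exactly the set of block boundaries, and $\Bl_{L,k}=\{a_{k-1}+1,\ldots,a_k\}$. Since $m\in R_Q=\{1,\ldots,N-1\}\setminus\widehat{R}_Q$, $m$ is not any $a_k$; equivalently $m$ is not the last element of any block, so the unique $k$ with $m\in\Bl_{L,k}$ also satisfies $m+1\in\Bl_{L,k}$.

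Next I would analyze the effect of $s_{\alpha_m}$ on the $d$-tuple $\tau=(i_1,\ldots,i_d)\in W^P$. The reflection $s_{\alpha_m}$, viewed as the transposition $(m,m+1)$, acts on $\tau$ by swapping the values $m$ and $m+1$ wherever they appear and then taking the minimal coset representative in $W^P$. There are three cases: (a) if $\{m,m+1\}\subseteq\tau$ or $\{m,m+1\}\cap\tau=\emptyset$, then $s_{\alpha_m}\tau=\tau$ and there is nothing to prove; (b) if $m\in\tau$ and $m+1\notin\tau$, say $i_n=m$, then $s_{\alpha_m}\tau$ is obtained from $\tau$ by replacing the entry $i_n=m$ with $m+1$, and the ascending order is preserved because the adjacent entries $i_{n-1}<m$ and (if it exists) $i_{n+1}>m+1$; (c) the symmetric case $m+1\in\tau$, $m\notin\tau$ is handled identically.

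Finally I would conclude using Definition \ref{definition:Class}. Writing $\Cl_\tau=(u_1,\ldots,u_d)$ with $u_n$ the unique index of the block containing $i_n$, in cases (a) the tuple is literally unchanged. In cases (b) and (c), the only entry that changes is the one at position $n$, and there the entry either $m$ or $m+1$ is replaced by the other; by the first paragraph, both $m$ and $m+1$ lie in the same block $\Bl_{L,k}$, so $u_n$ is unaffected. Hence $\Cl_\tau=\Cl_{s_{\alpha_m}\tau}$. There is no real obstacle here; the only subtle point is justifying that $s_{\alpha_m}\tau$ remains in $W^P$ with the correct tuple shape, which is automatic because $m$ and $m+1$ are consecutive integers.
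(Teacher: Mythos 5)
Your proof is correct and follows essentially the same route as the paper: first deduce from $m\in R_Q$ that $m$ and $m+1$ lie in the same block, then case-split on how the transposition $s_{\alpha_m}$ acts on the $d$-tuple (the paper uses four cases where you merge the two fixed-point cases into one), and conclude that the class is unchanged because only a single entry can change and it stays within its block.
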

\begin{proof}
We have $m \in \Bl_{L,k}$ for some $1 \leq k \leq \dprl{L}$. Suppose $m+1\notin \Bl_{L,k}$. This would mean that $m$ is the maximal element in $\Bl_{L,k}$, so $m=a_{k}$. Thus $m\in \widehat{R}_Q$ or $m=N$. In either case this means $m\notin R_Q$. This is a contradiction and thus $m+1\in \Bl_{L,k}$.
Now $s_{\alpha_m}$ acts on $\tau$ in one of the following ways.

\textbf{Case 1:} $\exists n$ such that $i_n = m$, with $i_{n+1} \neq m+1$. Then $s_{\alpha_m}\tau = (i_1,\ldots,i_{n-1},m+1,i_{n+1},\ldots,i_d)$. 

\textbf{Case 2:} $\exists n$ such that $i_n = m+1$, with $i_{n-1} \neq m$. Then $s_{\alpha_m}\tau = (i_1,\ldots,i_{n-1},m,i_{n+1},\ldots,i_d)$.

\textbf{Case 3:} $\exists n$ such that $i_n = m$, with $i_{n+1} = m+1$. Then $s_{\alpha_m}\tau = \tau$.

\textbf{Case 4:} $\nexists n$ such that $i_n = m$ or $i_n = m+1$. Then $s_{\alpha_m}\tau = \tau$.

Thus in all four possible cases it can be seen that $\Cl_{\tau}=\Cl_{s_{\alpha_m}\tau}$.
\end{proof}

\begin{lemma}
\label{lemma:EachElementHasHead}
Let $\tau:=(i_1,\ldots,i_d) \in H_w$. Then $\tau = s_{\alpha_{m_1}} \cdots s_{\alpha_{m_t}}\theta$ for some $m_1,\ldots,m_t \in R_Q$ and $\theta \in \He_L$.
\end{lemma}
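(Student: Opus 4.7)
The plan is to identify $\theta \in \He_L$ via its class, verify $\theta \in H_w$, and then produce the required word in the reflections $s_{\alpha_m}$, $m \in R_Q$, block by block.

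First I would define $\theta$ as follows. Let $\Cl_\tau = (u_1,\ldots,u_d)$ and, for each $1\le k\le \dprl{L}$, set $n_k := |\tau \cap \Bl_{L,k}|$. Take $\theta$ to be the element of $W^P$ whose intersection with each block $\Bl_{L,k}$ is the top segment $\{a_k-n_k+1,\ldots,a_k\}$. By construction $\theta$ satisfies condition (iii) of Proposition \ref{proposition:HeadTypeW}, so $\theta \in \He_L$, and $\Cl_\theta = \Cl_\tau$; uniqueness of the head having this class is given by Lemma \ref{lemma:HeadClassUnique}.

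Next, I would verify $\theta \le w$. The hypothesis $Q \subseteq Q_w$ together with Proposition \ref{proposition:HeadTypeW} applied to $w$ gives $w \in \He_L$. Since $\tau \le w$ in the Bruhat order, $i_j \le \ell_j$ for every $j$, which forces each entry of $\Cl_\tau$ to be no greater than the corresponding entry of $\Cl_w$ (the blocks partition $\{1,\ldots,N\}$ into increasing consecutive intervals). Applying Lemma \ref{lemma:classGeqHeadGeq} to the heads $w$ and $\theta$ with $\Cl_w \ge \Cl_\theta$ yields $\theta \le w$, hence $\theta \in H_w$.

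It remains to exhibit $m_1,\ldots,m_t \in R_Q$ with $\tau = s_{\alpha_{m_1}} \cdots s_{\alpha_{m_t}} \theta$. As observed in the proof of Lemma \ref{lemma:ClassUnchangedR_Q}, any $m \in R_Q$ has $m$ and $m+1$ in the same block, and conversely $\{a_{k-1}+1,\ldots,a_k-1\} \subseteq R_Q$ for each block. The corresponding reflections act on a subset of $\{1,\ldots,N\}$ as adjacent swaps inside a single block (swapping membership of $m$ and $m+1$ when exactly one of them is present) and never cross block boundaries. Since $|\tau \cap \Bl_{L,k}| = |\theta \cap \Bl_{L,k}| = n_k$ for every $k$, the task reduces, block by block, to the elementary combinatorial assertion that any size-$n_k$ subset of $\Bl_{L,k}$ can be reached from the top segment by a sequence of such adjacent swaps. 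Concatenating the words obtained for each block produces the desired expression.

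The only substantive step is the last combinatorial reachability claim, but it is completely standard (e.g., by induction on the number of positions in which the current subset differs from the target, moving one element at a time). The conceptual content, already implicit in Lemmas \ref{lemma:HeadClassUnique} and \ref{lemma:ClassUnchangedR_Q}, is that the orbits on $H_w$ of the subgroup generated by $\{s_{\alpha_m} : m \in R_Q\}$ are precisely indexed by classes, each orbit containing a unique head.
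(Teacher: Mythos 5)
Your proof is correct and follows essentially the same block-by-block construction as the paper: define $\theta$ so that each block intersection is pushed to the top segment of that block, then assemble the word in $\{s_{\alpha_m}\mid m\in R_Q\}$ by rearranging each block's entries independently via adjacent swaps that stay inside the block. You also add a step the paper leaves implicit, namely the explicit verification that $\theta\le w$; this is actually needed, since $\He_L$ is by definition a subset of $H_w$, and the paper simply asserts $\theta\in\He_L$ after the incrementing process without checking it.

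One small caution on your verification step. When you invoke Lemma~\ref{lemma:classGeqHeadGeq} with $\theta_2=\theta$, you have not yet established $\theta\in H_w$, so the stated hypothesis $\theta_2\in\He_L$ is not formally available and the application is circular as written. This is harmless in substance: the proof of Lemma~\ref{lemma:classGeqHeadGeq} uses only the structural condition of Proposition~\ref{proposition:HeadTypeW}(iii) and never invokes $\theta_2\le w$. You should either say explicitly that you are applying the lemma's \emph{argument} to the element $\theta\in W^P$ satisfying property (iii), or note that the lemma extends verbatim to all elements of $W^P$ with that property. (A naive replacement such as ``$|\tau\cap\Bl_{L,k}|\le|w\cap\Bl_{L,k}|$ for all $k$'' is false in general, so the pointwise class comparison really is the right tool here.)
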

\begin{proof}
To see this fix a $k$ with $1\leq k \leq \dprl{L}$ and consider $\Bl_{L,k}$. If $\tau \bigcap \Bl_{L,k}$ is empty we are done with this block. Otherwise $\tau \bigcap \Bl_{L,k} = (i_{n},i_{n+1},\ldots,i_{n+z})$ for some $1\leq n,z \leq d $ with $n+z\leq d$. If this is a maximal collection of elements in $\Bl_{L,k}$ we are done with this block, otherwise we may make it maximal. 

As we saw in Lemma \ref{lemma:ClassUnchangedR_Q}, case 1, we may act on $\tau$ by certain $s_{\alpha_m}$ to increment $(i_{n},\ldots,i_{n+z})$ to $(a_{k}-z,\ldots,a_k)$ (cf. Notation \ref{notation:nBlocksPart}). These $s_{\alpha_m}$ will all have $m \in R_Q$, since otherwise, if $m\in \widehat{R}_Q$, then acting by $s_{\alpha_m}$ would increment an entry to a value in $\Bl_{L,k+1}$. Further, all these $s_{\alpha_m}$ will have $m \in \Bl_{L,k}$. Thus these actions only affect the entries of $\tau$ that intersect with $\Bl_{L,k}$ and so can be performed independently for each block.

After performing the incrementing process for each block we have $s_{\alpha_{m_t}} \cdots s_{\alpha_{m_1}} \tau$ is a head of type $L$ for some $m_1,\ldots,m_t \in R_Q$. That is $s_{\alpha_{m_t}} \cdots s_{\alpha_{m_1}} \tau = \theta$ for some $\theta \in \He_L$. Rewriting we get our desired result $\tau = s_{\alpha_{m_1}} \cdots s_{\alpha_{m_t}}\theta$.
\end{proof}

Let $\mathcal{H}_w$ be the Hasse diagram for the Bruhat order on $H_w$. We may label the edges of $\mathcal{H}_w$ in the following way. Given an edge connecting $\tau_1$ to $\tau_2$ with $\tau_1\leq \tau_2$ we know that $\tau_1 = s_{\beta}\tau_2$ for a unique $\beta \in \Phi^+$. However, in the case of the Grassmannian, we know that $\beta$ is a simple root. This is because a divisor of $X(\tau)$, for $\tau=(i_1,\ldots,i_d)$, is obtained by reducing a single entry, say $i_n$, to $i_n - 1$, in which case $\beta$ is simply $\alpha_{i_n - 1}$. Thus we may label the edge connecting $\tau_1$ to $\tau_2$ by the unique $s_{\alpha_m}$ such that $\tau_1 = s_{\alpha_m}\tau_2$, $\alpha_m \in S$.

\begin{proposition}
\label{proposition:PartitionofHasse}
(Partition in Degree 1) Let $\widehat{\mathcal{H}}_w$ be the diagram formed by removing all edges of $\mathcal{H}_w$ labeled by  $s_{\alpha_m}$ with $m\in \widehat{R}_Q$. Then $\widehat{\mathcal{H}}_w$ is a disconnected diagram with $|\He_L|$ disjoint subdiagrams and each subdiagram has a unique maximal element under the Bruhat order given by a $\theta \in \He_L$. Further the class of each element in a fixed subdiagram is equal to the class of the head in that subdiagram.
\end{proposition}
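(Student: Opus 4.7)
The plan is to decompose the proposition into three claims about the connected components of $\widehat{\mathcal{H}}_w$: (a) each component contains at least one head of type $L$; (b) each component contains at most one head of type $L$; and (c) the head in each component is the unique Bruhat-maximal element of the component, and every vertex in the component shares the class of this head. From (a) and (b) the count $|\He_L|$ of components is automatic, since the map sending a component to its head then becomes a well-defined bijection onto $\He_L$.

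For (a), I would invoke Lemma \ref{lemma:EachElementHasHead} directly: any $\tau \in H_w$ can be written as $s_{\alpha_{m_1}} \cdots s_{\alpha_{m_t}}\theta$ with each $m_i \in R_Q$ and some $\theta \in \He_L$. This exhibits a path in $\mathcal{H}_w$ from $\theta$ down to $\tau$ whose edges are labelled only by elements of $R_Q$, and therefore none of them are removed when forming $\widehat{\mathcal{H}}_w$. Hence $\tau$ and $\theta$ lie in the same component. I would also record the fact, visible from the construction in the proof of that lemma, that the reflections are applied so as to increment entries; consequently $\theta \geq \tau$ in the Bruhat order. This monotonicity will be needed for (c).

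For the class statement in (c), Lemma \ref{lemma:ClassUnchangedR_Q} does all the work: the class is preserved along every edge of $\widehat{\mathcal{H}}_w$, so all vertices of a given component share one common class, which must coincide with the class of any head sitting in that component. Claim (b) then follows immediately from Lemma \ref{lemma:HeadClassUnique}, since two heads in the same component would share a class and hence be equal. For the Bruhat maximality portion of (c), I would argue by contradiction: if some $\tau' \neq \theta$ in the component containing $\theta$ were also maximal, then applying Lemma \ref{lemma:EachElementHasHead} to $\tau'$ produces a head $\theta' \geq \tau'$ lying in the same component; uniqueness from (b) forces $\theta' = \theta$, which gives $\theta \geq \tau'$ and contradicts maximality of $\tau'$ unless $\tau' = \theta$.

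I do not expect a genuine obstacle here; the four preceding lemmas are essentially tailored to this statement. The one point requiring a little care is the Bruhat-monotonicity noted above — verifying that the sequence of $s_{\alpha_m}$ constructed in Lemma \ref{lemma:EachElementHasHead} really is a sequence of simple reflections that only go up in the Bruhat order, so that the head dominates every element of its component. Once that is in hand, the bijection between components and $\He_L$, the unique-maximum property, and the class-invariance claim all assemble cleanly.
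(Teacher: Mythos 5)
Your proof is correct and follows essentially the same route as the paper: it uses Lemma \ref{lemma:EachElementHasHead} to connect every $\tau$ to a head via $R_Q$-labelled edges (and to get the Bruhat-monotonicity $\theta_\tau \geq \tau$), Lemma \ref{lemma:ClassUnchangedR_Q} for class-invariance along the remaining edges, and Lemma \ref{lemma:HeadClassUnique} for uniqueness of the head in each component. Your organization into claims (a)/(b)/(c) is a bit more explicit than the paper's, but the ingredients and their deployment are the same.
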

\begin{proof}
By Lemma \ref{lemma:EachElementHasHead} we have that for any $\tau \in H_w$, $\tau = s_{\alpha_{m_1}} \cdots s_{\alpha_{m_t}}\theta$ for some $m_1,\ldots,m_t \in R_Q$ and $\theta \in \He_L$. Thus there is a path of edges in $\widehat{\mathcal{H}}_w$ connecting $\tau$ and $\theta$. By Lemma \ref{lemma:ClassUnchangedR_Q} this also means $\Cl_{\tau} = \Cl_{\theta}$. Combining this with Lemma \ref{lemma:HeadClassUnique} we get that $\theta$ is in fact the unique head connected to $\tau$ in $\widehat{\mathcal{H}}_w$. And thus removing all edges of $\mathcal{H}_w$ labeled by  $s_{\alpha_m}$ with $m\in \widehat{R}_Q$ to form $\widehat{\mathcal{H}}_w$ we get a disconnected diagram with $|\He_L|$ disjoint subdiagrams.

It remains to show that the unique maximal element in each subdiagram is in fact the head. But this is clear by the proof of Lemma \ref{lemma:EachElementHasHead}. For every $\tau \in H_w$ we found $s_{\alpha_{m_t}} \cdots s_{\alpha_{m_1}} \tau = \theta$ for some $m_1,\ldots,m_t \in R_Q$ and $\theta \in \He_L$. Each subsequent action by the $s_{\alpha_{m_n}}$ resulted in an element that was greater in the Bruhat order, and thus for all $\tau$ connected to $\theta$ in $\widehat{\mathcal{H}}_w$ we have $\theta\geq \tau$.  
\end{proof}
\begin{definition}
\label{definition:HeadofTau} 
Let $\tau \in H_w$. Then define the \emph{head of $\tau$}, which we will denote $\theta_{\tau}$, to be the unique head in $\He_L$ connected to $\tau$ in $\widehat{\mathcal{H}}_w$. This is well defined and unique by Proposition \ref{proposition:PartitionofHasse}.

\end{definition}

\begin{example}
\label{example:degree1Partition}
In Example \ref{example:blocks} we saw that for $w=(3,6,9)$ and $Q=P_{R_Q}$ with $R_Q=R_{Q_w}=\left\{ 1,2,4,5,7,8 \right\}$ we had $\He_{L}=\{(1,2,3),(2,3,6),(2,3,9),(3,5,6),(3,6,9)\}$. If we draw the Hasse diagram for $w$ and remove the edges labeled by $s_{\alpha_m}$ for $m \in \widehat{R}_{Q}=\left\{ 3,6 \right\}$ we have the following disjoint diagram. The diagram has $\mid \He_{L} \mid = 5$ disjoint subdiagrams, with the unique maximal element in each subdiagram a head of type $L$.

\end{example}
\begin{tikzpicture}[scale=.7]
  \def\tikzhassevert{1.3}
  \def\tikzhassehor{2.75}
  \def\tikzhassevertoffset{0}
  \node (3-6-9) at (0,4*\tikzhassevert + \tikzhassevertoffset) {$\textbf{(3,6,9)}$};
  
  \node (2-6-9) at (-1*\tikzhassehor,3*\tikzhassevert + \tikzhassevertoffset) {$(2,6,9)$};
  \node (3-5-9) at (0,3*\tikzhassevert + \tikzhassevertoffset) {$(3,5,9)$};
  \node (3-6-8) at (1*\tikzhassehor,3*\tikzhassevert + \tikzhassevertoffset) {$(3,6,8)$};
  
  \node (1-6-9) at (-2.5*\tikzhassehor,2*\tikzhassevert + \tikzhassevertoffset) {$(1,6,9)$};
  \node (2-5-9) at (-1.5*\tikzhassehor,2*\tikzhassevert + \tikzhassevertoffset) {$(2,5,9)$};
  \node (2-6-8) at (-0.5*\tikzhassehor,2*\tikzhassevert + \tikzhassevertoffset) {$(2,6,8)$};
  \node (3-4-9) at (0.5*\tikzhassehor,2*\tikzhassevert + \tikzhassevertoffset) {$(3,4,9)$};
  \node (3-5-8) at (1.5*\tikzhassehor,2*\tikzhassevert + \tikzhassevertoffset) {$(3,5,8)$};
  \node (3-6-7) at (2.5*\tikzhassehor,2*\tikzhassevert + \tikzhassevertoffset) {$(3,6,7)$};
  
  \node (1-5-9) at (-3*\tikzhassehor,1*\tikzhassevert + \tikzhassevertoffset) {$(1,5,9)$};
  \node (1-6-8) at (-2*\tikzhassehor,1*\tikzhassevert + \tikzhassevertoffset) {$(1,6,8)$};
  \node (2-4-9) at (-1*\tikzhassehor,1*\tikzhassevert + \tikzhassevertoffset) {$(2,4,9)$};
  \node (2-5-8) at (0,1*\tikzhassevert + \tikzhassevertoffset) {$(2,5,8)$};
  \node (2-6-7) at (1*\tikzhassehor,1*\tikzhassevert + \tikzhassevertoffset) {$(2,6,7)$};
  \node (3-4-8) at (2*\tikzhassehor,1*\tikzhassevert + \tikzhassevertoffset) {$(3,4,8)$};
  \node (3-5-7) at (3*\tikzhassehor,1*\tikzhassevert + \tikzhassevertoffset) {$(3,5,7)$};
  
  \node (2-3-9) at (-3.5*\tikzhassehor - 0.5,0) {$\textbf{(2,3,9)}$};
  \node (1-4-9) at (-2.5*\tikzhassehor,0 + \tikzhassevertoffset) {$(1,4,9)$};
  \node (1-5-8) at (-1.5*\tikzhassehor,0 + \tikzhassevertoffset) {$(1,5,8)$};
  \node (1-6-7) at (-0.5*\tikzhassehor,0 + \tikzhassevertoffset) {$(1,6,7)$};
  \node (2-4-8) at (0.5*\tikzhassehor,0 + \tikzhassevertoffset) {$(2,4,8)$};
  \node (2-5-7) at (1.5*\tikzhassehor,0 + \tikzhassevertoffset) {$(2,5,7)$};
  \node (3-4-7) at (2.5*\tikzhassehor,0 + \tikzhassevertoffset) {$(3,4,7)$};
  \node (3-5-6) at (3.5*\tikzhassehor+0.5,0) {$\textbf{(3,5,6)}$};
  
  \node (1-3-9) at (-3*\tikzhassehor - 0.75,-1*\tikzhassevert) {$(1,3,9)$};
  \node (2-3-8) at (-2*\tikzhassehor - 0.75,-1*\tikzhassevert) {$(2,3,8)$};  
  \node (1-4-8) at (-1*\tikzhassehor,-1*\tikzhassevert + \tikzhassevertoffset) {$(1,4,8)$};
  \node (1-5-7) at (0,-1*\tikzhassevert + \tikzhassevertoffset) {$(1,5,7)$};
  \node (2-4-7) at (1*\tikzhassehor,-1*\tikzhassevert + \tikzhassevertoffset) {$(2,4,7)$};
  \node (2-5-6) at (2*\tikzhassehor +0.75,-1*\tikzhassevert) {$(2,5,6)$};
  \node (3-4-6) at (3*\tikzhassehor + 0.75,-1*\tikzhassevert) {$(3,4,6)$};
  
  \node (1-2-9) at (-3*\tikzhassehor -0.75,-2*\tikzhassevert) {$(1,2,9)$};
  \node (1-3-8) at (-2*\tikzhassehor-0.75,-2*\tikzhassevert) {$(1,3,8)$};
  \node (2-3-7) at (-1*\tikzhassehor-0.75,-2*\tikzhassevert) {$(2,3,7)$};
  \node (1-4-7) at (0,-2*\tikzhassevert + \tikzhassevertoffset) {$(1,4,7)$};
  \node (1-5-6) at (1*\tikzhassehor+0.75,-2*\tikzhassevert) {$(1,5,6)$};
  \node (2-4-6) at (2*\tikzhassehor+0.75,-2*\tikzhassevert) {$(2,4,6)$};
  \node (3-4-5) at (3*\tikzhassehor+0.75,-2*\tikzhassevert) {$(3,4,5)$};
  
  \node (1-2-8) at (-2*\tikzhassehor,-3*\tikzhassevert) {$(1,2,8)$};
  \node (1-3-7) at (-1*\tikzhassehor,-3*\tikzhassevert) {$(1,3,7)$};
  \node (2-3-6) at (0,-3*\tikzhassevert) {$\textbf{(2,3,6)}$};
  \node (1-4-6) at (1*\tikzhassehor,-3*\tikzhassevert) {$(1,4,6)$};
  \node (2-4-5) at (2*\tikzhassehor,-3*\tikzhassevert) {$(2,4,5)$};
  
  \node (1-2-7) at (-1.5*\tikzhassehor,-4*\tikzhassevert) {$(1,2,7)$};
  \node (1-3-6) at (-0.5*\tikzhassehor,-4*\tikzhassevert) {$(1,3,6)$};
  \node (2-3-5) at (0.5*\tikzhassehor,-4*\tikzhassevert) {$(2,3,5)$};
  \node (1-4-5) at (1.5*\tikzhassehor,-4*\tikzhassevert) {$(1,4,5)$};
  
  \node (1-2-6) at (-1*\tikzhassehor,-5*\tikzhassevert) {$(1,2,6)$};
  \node (1-3-5) at (0,-5*\tikzhassevert) {$(1,3,5)$};
  \node (2-3-4) at (1*\tikzhassehor,-5*\tikzhassevert) {$(2,3,4)$};
  
  \node (1-2-5) at (-0.5*\tikzhassehor,-6*\tikzhassevert) {$(1,2,5)$};
  \node (1-3-4) at (0.5*\tikzhassehor,-6*\tikzhassevert) {$(1,3,4)$};
  
  \node (1-2-4) at (0,-7*\tikzhassevert) {$(1,2,4)$};
  
  \node (1-2-3) at (0,-8*\tikzhassevert) {$\textbf{(1,2,3)}$};
  
  \draw (3-6-9) -- (2-6-9);
  \draw (3-6-9) -- (3-5-9);
  \draw (3-6-9) -- (3-6-8);
  
  \draw (2-6-9) -- (1-6-9);
  \draw (2-6-9) -- (2-5-9);
  \draw (2-6-9) -- (2-6-8);
  \draw (3-5-9) -- (2-5-9);
  \draw (3-5-9) -- (3-4-9);
  \draw (3-5-9) -- (3-5-8);
  \draw (3-6-8) -- (2-6-8);
  \draw (3-6-8) -- (3-5-8);
  \draw (3-6-8) -- (3-6-7);
  
  \draw (1-6-9) -- (1-5-9);
  \draw (1-6-9) -- (1-6-8);
  \draw (2-5-9) -- (1-5-9);
  \draw (2-5-9) -- (2-4-9);
  \draw (2-5-9) -- (2-5-8);
  \draw (2-6-8) -- (1-6-8);
  \draw (2-6-8) -- (2-5-8);
  \draw (2-6-8) -- (2-6-7);
  \draw (3-4-9) -- (2-4-9);
  \draw (3-4-9) -- (3-4-8);
  \draw (3-5-8) -- (2-5-8);
  \draw (3-5-8) -- (3-4-8);
  \draw (3-5-8) -- (3-5-7);
  \draw (3-6-7) -- (2-6-7);
  \draw (3-6-7) -- (3-5-7);
  
  \draw (1-5-9) -- (1-4-9);
  \draw (1-5-9) -- (1-5-8);
  \draw (1-6-8) -- (1-5-8);
  \draw (1-6-8) -- (1-6-7);
  \draw (2-4-9) -- (1-4-9);
  \draw (2-4-9) -- (2-4-8);
  \draw (2-5-8) -- (1-5-8);
  \draw (2-5-8) -- (2-4-8);
  \draw (2-5-8) -- (2-5-7);
  \draw (2-6-7) -- (1-6-7);
  \draw (2-6-7) -- (2-5-7);
  \draw (3-4-8) -- (2-4-8);
  \draw (3-4-8) -- (3-4-7);
  \draw (3-5-7) -- (2-5-7);
  \draw (3-5-7) -- (3-4-7);
  
  \draw (1-4-9) -- (1-4-8);
  \draw (1-5-8) -- (1-4-8);
  \draw (1-5-8) -- (1-5-7);
  \draw (1-6-7) -- (1-5-7);
  \draw (2-3-9) -- (1-3-9);
  \draw (2-3-9) -- (2-3-8);
  \draw (2-4-8) -- (1-4-8);
  \draw (2-4-8) -- (2-4-7);
  \draw (2-5-7) -- (1-5-7);
  \draw (2-5-7) -- (2-4-7);
  \draw (3-4-7) -- (2-4-7);
  \draw (3-5-6) -- (2-5-6);
  \draw (3-5-6) -- (3-4-6);
  
  \draw (1-3-9) -- (1-2-9);
  \draw (1-3-9) -- (1-3-8);
  \draw (1-4-8) -- (1-4-7);
  \draw (1-5-7) -- (1-4-7);
  \draw (2-3-8) -- (1-3-8);
  \draw (2-3-8) -- (2-3-7);
  \draw (2-4-7) -- (1-4-7);
  \draw (2-5-6) -- (1-5-6);
  \draw (2-5-6) -- (2-4-6);
  \draw (3-4-6) -- (2-4-6);
  \draw (3-4-6) -- (3-4-5);
    
  \draw (1-2-9) -- (1-2-8);
  \draw (1-3-8) -- (1-2-8);
  \draw (1-3-8) -- (1-3-7);
  \draw (2-3-7) -- (1-3-7);
  \draw (1-5-6) -- (1-4-6);
  \draw (2-4-6) -- (1-4-6);
  \draw (2-4-6) -- (2-4-5);
  \draw (3-4-5) -- (2-4-5);
  
  \draw (1-2-8) -- (1-2-7);
  \draw (1-3-7) -- (1-2-7);
  \draw (1-4-6) -- (1-4-5);
  \draw (2-3-6) -- (2-3-5);
  \draw (2-3-6) -- (1-3-6);
  \draw (2-4-5) -- (1-4-5);
  
  \draw (1-3-6) -- (1-2-6);
  \draw (1-3-6) -- (1-3-5);
  \draw (2-3-5) -- (1-3-5);
  \draw (2-3-5) -- (2-3-4);
  
  \draw (1-2-6) -- (1-2-5);
  \draw (1-3-5) -- (1-2-5);
  \draw (1-3-5) -- (1-3-4);
  \draw (2-3-4) -- (1-3-4);
  
  \draw (1-2-5) -- (1-2-4);
  \draw (1-3-4) -- (1-2-4);
  
\end{tikzpicture}

\begin{remark} 
It is not difficult to check that each of the subdiagrams in $\widehat{\mathcal{H}}_w$ will always be an interval in $H_w$.
\end{remark}

\begin{corollary}
\label{corollary:classDeterminesHeadEquality}
Let $\tau_1, \tau_2 \in H_w$. Then $\theta_{\tau_1}=\theta_{\tau_2}$ if and only if  $\Cl_{\tau_1}=\Cl_{\tau_2}$.
\end{corollary}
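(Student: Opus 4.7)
The corollary is essentially a direct consequence of results already established, so the plan is to combine Proposition \ref{proposition:PartitionofHasse} with Lemma \ref{lemma:HeadClassUnique} and verify both implications in a short argument.

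For the forward direction, suppose $\theta_{\tau_1} = \theta_{\tau_2}$. The last sentence of Proposition \ref{proposition:PartitionofHasse} asserts that the class of any element of a subdiagram of $\widehat{\mathcal{H}}_w$ equals the class of the unique head of that subdiagram. Since by Definition \ref{definition:HeadofTau} the element $\tau_i$ lies in the subdiagram whose unique head is $\theta_{\tau_i}$, this gives $\Cl_{\tau_i} = \Cl_{\theta_{\tau_i}}$ for $i=1,2$, and the equality $\theta_{\tau_1} = \theta_{\tau_2}$ then forces $\Cl_{\tau_1} = \Cl_{\tau_2}$.

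For the converse, assume $\Cl_{\tau_1} = \Cl_{\tau_2}$. Applying the same observation from Proposition \ref{proposition:PartitionofHasse} in reverse, we obtain $\Cl_{\theta_{\tau_1}} = \Cl_{\tau_1} = \Cl_{\tau_2} = \Cl_{\theta_{\tau_2}}$. Since both $\theta_{\tau_1}$ and $\theta_{\tau_2}$ are heads of type $L$, Lemma \ref{lemma:HeadClassUnique} (equality of heads is detected by equality of classes) immediately yields $\theta_{\tau_1} = \theta_{\tau_2}$.

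There is no real obstacle here: the work was already done in proving the partition statement of Proposition \ref{proposition:PartitionofHasse} (which itself relied on Lemmas \ref{lemma:ClassUnchangedR_Q} and \ref{lemma:EachElementHasHead}) together with the injectivity of $\theta \mapsto \Cl_\theta$ on $\He_L$ from Lemma \ref{lemma:HeadClassUnique}. The corollary just packages these two facts into a single equivalence, and I expect the written proof to be only two or three lines invoking these two references in each direction.
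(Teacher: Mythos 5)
Your proof is correct and follows essentially the same route as the paper: both directions reduce to the chain of equivalences $\theta_{\tau_1}=\theta_{\tau_2} \iff \Cl_{\theta_{\tau_1}}=\Cl_{\theta_{\tau_2}} \iff \Cl_{\tau_1}=\Cl_{\tau_2}$, using Lemma \ref{lemma:HeadClassUnique} for the first equivalence and the identity $\Cl_{\tau}=\Cl_{\theta_{\tau}}$ from Proposition \ref{proposition:PartitionofHasse} for the second. The paper simply states both implications at once as a biconditional rather than splitting into forward and converse, but the content is identical.
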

\begin{proof}
By Lemma \ref{lemma:HeadClassUnique} we have $\theta_{\tau_1}=\theta_{\tau_2}$ if and only if $\Cl_{\theta_{\tau_1}}=\Cl_{\theta_{\tau_2}}$. But then as seen in the proof of Proposition \ref{proposition:PartitionofHasse} we have $\Cl_{\tau_1}=\Cl_{\theta_{\tau_1}}$ and $\Cl_{\tau_2}=\Cl_{\theta_{\tau_2}}$. Thus $\Cl_{\theta_{\tau_1}}=\Cl_{\theta_{\tau_2}}$ if and only if $\Cl_{\tau_1} = \Cl_{\tau_2}$.
\end{proof}

\subsection{A partial order on the set of degree r heads}
\label{subsec:DegreeRHeadsPartialOrder}
We may extend the definition of a head in the following way. Let $\theta_1,\ldots,\theta_r \in \He_{L}$. Then we define the sequence $(\theta_1,\ldots,\theta_r)$ to be a\emph{ degree r head}. If, in addition, $\theta_1 \geq \cdots \geq \theta_r$, then $(\theta_1,\ldots,\theta_r)$ is a \emph{standard degree r head}.
\begin{definition}
\label{definition:stdr}
Define
\begin{center}
$\He_{L,r}:=\{(\theta_1,\ldots,\theta_r) \mid \theta_i \in \He_L\}$,
\end{center}
and
\begin{center}
$\He_{L,r}^{std}:=\{(\theta_1,\ldots,\theta_r)\in \He_{L,r} \mid \theta_1 \geq \cdots \geq \theta_r \}$,
\end{center}
the set of all degree r heads and all standard degree r heads, respectively.
\label{definition:HeadofTauDegR} 

Finally, for $\tau_1,\ldots,\tau_r \in H_w$, define the \emph{degree $r$ head of $(\tau_1,\ldots,\tau_r)$} to be $(\theta_{\tau_1},\ldots,\theta_{\tau_r})$. This degree $r$ head is clearly unique since each individual head is unique.
\end{definition}

To each $\theta \in \He_{L,1}$ $(=\He_L)$ we may associate a collection of Pl\"{u}cker coordinates $\pl_{\tau}$ such that $\tau$ has head $\theta$. This gives us a partition of the degree 1 standard monomials by Proposition \ref{proposition:PartitionofHasse}. The next step is to describe a partition of the degree r standard monomials in terms of degree r heads(cf. Corollary \ref{corollary:decompositionStdr}). The fact that this is possible is due to a remarkable property of the degree 1 heads: given two elements $\tau_1, \tau_2 \in H_w$ which satisfy $\tau_1 \geq \tau_2$, their respective degree 1 heads $\theta_{\tau_1}$, $\theta_{\tau_2}$ satisfy $\theta_{\tau_1} \geq \theta_{\tau_2}$ (as we shall see in Proposition \ref{proposition:PartitionofDegRStandard} for the case $r=1$). Note that this property does not hold for any partition of the Hasse diagram, or even any partition with each subdiagram containing a unique maximal element. 


\begin{proposition}
\label{proposition:PartitionofDegRStandard}
Let $\tau_1,\ldots,\tau_r \in H_w$, $\tau_1\geq \cdots \geq \tau_r$, with degree $r$ head $(\theta_{\tau_1},\ldots,\theta_{\tau_r})$. Then $\theta_{\tau_1}\geq \cdots\geq \theta_{\tau_r}$.
\end{proposition}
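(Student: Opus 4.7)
The plan is to reduce immediately to the case $r=2$: if we can show that for any pair $\sigma \geq \sigma'$ in $H_w$ one has $\theta_\sigma \geq \theta_{\sigma'}$, then applying this to each consecutive pair $(\tau_i, \tau_{i+1})$ gives the full chain $\theta_{\tau_1} \geq \cdots \geq \theta_{\tau_r}$. So the entire content of the proposition is the two-element version.

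For $r=2$, the strategy is to argue via \emph{classes} rather than directly with the heads. The key observation is the following elementary fact about the blocks of $L$: since $\Bl_{L,1}, \ldots, \Bl_{L,\dprl{L}}$ partition $\{1,\ldots,N\}$ into consecutive intervals arranged in increasing order, if $m \geq n$ with $m \in \Bl_{L,q}$ and $n \in \Bl_{L,p}$, then automatically $q \geq p$. Applied entrywise to $\tau_1 = (i_1,\ldots,i_d) \geq \tau_2 = (j_1,\ldots,j_d)$, which by definition of the Bruhat order means $i_k \geq j_k$ for all $k$, this yields $\Cl_{\tau_1}(k) \geq \Cl_{\tau_2}(k)$ for every $1 \leq k \leq d$.

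Now I invoke Corollary \ref{corollary:classDeterminesHeadEquality} (more precisely, the observation used in its proof, also established directly in Lemma \ref{lemma:ClassUnchangedR_Q} and Proposition \ref{proposition:PartitionofHasse}): a vector in $H_w$ has the same class as its head. Thus $\Cl_{\theta_{\tau_1}} = \Cl_{\tau_1}$ and $\Cl_{\theta_{\tau_2}} = \Cl_{\tau_2}$, so the pointwise inequality of classes transfers to $\Cl_{\theta_{\tau_1}}(k) \geq \Cl_{\theta_{\tau_2}}(k)$ for all $k$. Finally, Lemma \ref{lemma:classGeqHeadGeq} is precisely the statement that pointwise domination of classes for two heads of type $L$ implies Bruhat domination of the heads themselves, yielding $\theta_{\tau_1} \geq \theta_{\tau_2}$.

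There is no real obstacle here; the work has already been done in the preceding lemmas. The only subtle point worth emphasizing in the write-up is the role of Lemma \ref{lemma:classGeqHeadGeq}: componentwise comparison of classes does not in general imply Bruhat comparison of arbitrary elements of $W^P$ (one can easily construct counterexamples), but it \emph{does} imply Bruhat comparison once both elements are known to be heads of type $L$, because the maximality condition in Proposition \ref{proposition:HeadTypeW}(iii) forces the entries in each block to be the top of that block, so the class uniquely determines the head. This is exactly why the proposition can be reduced to a purely combinatorial statement about the class sequences.
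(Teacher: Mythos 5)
Your proof is correct and follows essentially the same route as the paper's: reduce to consecutive pairs, deduce componentwise domination of classes from Bruhat domination of the $\tau_i$ (using that the blocks are consecutive increasing intervals), transfer to the heads via $\Cl_{\theta_\tau} = \Cl_\tau$, and close with Lemma \ref{lemma:classGeqHeadGeq}. The paper phrases the class-domination step as a proof by contradiction while you argue it directly, and your closing remark on why Lemma \ref{lemma:classGeqHeadGeq} requires both elements to be heads is a useful clarification, but these are cosmetic differences rather than a different argument.
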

\begin{proof}
Let $i$ be an integer with $1\leq i < r$. We have that $\tau_{i}:=(x_1,\ldots,x_d) \geq \tau_{i+1}:=(y_1,\ldots,y_d)$. Let $\Cl_{\tau_i} \coloneqq (t_{1},\ldots,t_{d})$ and $\Cl_{\tau_{i+1}} \coloneqq (u_1,\ldots,u_{d})$. Suppose that $u_{j}>t_{j}$ for some $1\leq j \leq d$, then this would imply that $y_{j}>x_{j}$ which is a contradiction of $\tau_{i} \geq \tau_{i+1}$. Thus 
\begin{equation}
\label{equation:partdegr}
t_j \geq u_j \textrm{ for all }1 \leq j \leq d.
\end{equation}

By Proposition \ref{proposition:PartitionofHasse} $\Cl_{\theta_{\tau_i}}=\Cl_{\tau_i}=(t_{1},\ldots,t_{d})$ and $\Cl_{\theta_{\tau_{i+1}}}=\Cl_{\tau_{i+1}}=(u_{1},\ldots,u_{d})$.

This together with \eqref{equation:partdegr} implies, by Lemma \ref{lemma:classGeqHeadGeq}, that $\theta_{\tau_i} \geq \theta_{\tau_{i+1}}$. As our choice of $i$ was arbitrary we are done.
\end{proof}

\begin{definition}
Let 

\begin{center}
$\Std_r:=\left\{ \pl_{\tau_1} \cdots \pl_{\tau_r} \mid \tau_i \in H_w \textrm{ for } 1\leq i \leq r \textrm{ and }\tau_1 \geq \cdots \geq \tau_r \right\}$.
\end{center}
This is the set of degree r standard monomials. For $\underline{\theta}:=(\theta_1,\ldots,\theta_r)\in\He_{L,r}^{std}$ define 
\begin{center}
$\Std_{\underline{\theta}} := \left\{\pl_{\tau_1} \cdots \pl_{\tau_r} \in \Std_r \mid \tau_i \text{ has head } \theta_i \text{ for } 1\leq i \leq r \right\}$
\end{center}

We will often want to refer to the subspace of $\mathbb{C}[X(w)]_r$ generated by these subsets; for $X\subseteq Std_r$ let $\spann{X}$ denote the span of the elements in $X$.
\end{definition}
With these definitions in hand we may now state two important corollaries of Proposition \ref{proposition:PartitionofDegRStandard}.
\begin{corollary}
Let $\pl_{\tau_1} \cdots \pl_{\tau_r} \in \Std_r$. Then $\pl_{\theta_{\tau_1}} \cdots \pl_{\theta_{\tau_r}}$ is standard.
\end{corollary}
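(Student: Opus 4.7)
The plan is to observe that this corollary is essentially a direct reformulation of Proposition \ref{proposition:PartitionofDegRStandard}, so the proof reduces to unwinding definitions and applying that proposition.

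Concretely, I would proceed as follows. By assumption $\pl_{\tau_1}\cdots\pl_{\tau_r} \in \Std_r$, so by Definition \ref{definition:stdMonomials} we have $\tau_1,\ldots,\tau_r \in H_w$ with $\tau_1 \geq \cdots \geq \tau_r$ in the Bruhat order. For each $i$, the head $\theta_{\tau_i} \in \He_L$ is well-defined by Proposition \ref{proposition:PartitionofHasse} (and Definition \ref{definition:HeadofTau}), and in particular lies in $H_w$. Thus each Pl\"ucker coordinate $\pl_{\theta_{\tau_i}}$ is a well-defined element of $\mathbb{C}[X(w)]_1$.

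The only remaining thing to check is the standardness chain $\theta_{\tau_1} \geq \cdots \geq \theta_{\tau_r}$; but this is precisely the content of Proposition \ref{proposition:PartitionofDegRStandard} applied to $(\tau_1,\ldots,\tau_r)$. Hence $\pl_{\theta_{\tau_1}}\cdots\pl_{\theta_{\tau_r}}$ is a standard monomial, as required. There is no real obstacle here: the work of showing that heads respect the Bruhat order under a descending chain has already been done in Proposition \ref{proposition:PartitionofDegRStandard} (via Lemma \ref{lemma:classGeqHeadGeq} and the fact that $\Cl_{\tau_i} = \Cl_{\theta_{\tau_i}}$), so this corollary is a one-line deduction.
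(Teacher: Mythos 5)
Your proof is correct and matches the paper's approach exactly: the corollary is stated without proof in the paper precisely because it is the immediate one-line deduction from Proposition \ref{proposition:PartitionofDegRStandard} and Definition \ref{definition:stdMonomials} that you spell out.
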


\begin{corollary}
\label{corollary:decompositionStdr}
(Partition in Degree r) The set $\Std_r$ is partitioned into disjoint subsets labeled by $\underline{\theta} \in \He_{L,r}^{std}$. 
Explicitly 
\begin{center}
$\displaystyle \Std_r=\bigsqcup_{\underline{\theta} \in \He_{L,r}^{std}} \Std_{\underline{\theta}}$.
\end{center}
And this implies
\begin{center}
$\displaystyle \spann{\Std_r}\;\;=\bigoplus_{\underline{\theta} \in \He_{L,r}^{std}} \spann{\Std_{\underline{\theta}}}$.
\end{center}
\end{corollary}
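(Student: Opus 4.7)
The statement has two parts: a set-level disjoint union $\Std_r = \bigsqcup_{\underline\theta \in \He_{L,r}^{std}} \Std_{\underline\theta}$ and a vector-space direct sum decomposition $\spann{\Std_r} = \bigoplus_{\underline\theta \in \He_{L,r}^{std}} \spann{\Std_{\underline\theta}}$. My plan is to deduce both from Proposition \ref{proposition:PartitionofDegRStandard} together with the standard monomial basis theorem (Theorem \ref{theorem:stdMonTheoryFundamental}).

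For the first part, I would start with an arbitrary $\pl_{\tau_1}\cdots \pl_{\tau_r} \in \Std_r$, so that $\tau_1 \geq \cdots \geq \tau_r$ in the Bruhat order, with each $\tau_i \in H_w$. Since the partition of $\widehat{\mathcal H}_w$ in Proposition \ref{proposition:PartitionofHasse} gives each element of $H_w$ a unique head, each $\tau_i$ has a well-defined $\theta_{\tau_i}\in\He_L$, so the assignment
\[
\pl_{\tau_1}\cdots \pl_{\tau_r} \;\longmapsto\; \underline\theta := (\theta_{\tau_1},\ldots,\theta_{\tau_r})
\]
is unambiguous on standard monomials. Applying Proposition \ref{proposition:PartitionofDegRStandard} to the chain $\tau_1 \geq \cdots \geq \tau_r$ yields $\theta_{\tau_1} \geq \cdots \geq \theta_{\tau_r}$, so in fact $\underline\theta \in \He_{L,r}^{std}$. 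By definition of $\Std_{\underline\theta}$, our monomial then belongs to $\Std_{\underline\theta}$. This shows every element of $\Std_r$ lies in some $\Std_{\underline\theta}$ with $\underline\theta \in \He_{L,r}^{std}$; conversely each $\Std_{\underline\theta}$ is by definition a subset of $\Std_r$. Disjointness is immediate: if $\pl_{\tau_1}\cdots \pl_{\tau_r} \in \Std_{\underline\theta}\cap \Std_{\underline\theta'}$, then both $\underline\theta$ and $\underline\theta'$ equal $(\theta_{\tau_1},\ldots,\theta_{\tau_r})$, forcing $\underline\theta = \underline\theta'$.

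For the vector-space statement, I would invoke Theorem \ref{theorem:stdMonTheoryFundamental}, which tells us that $\Std_r$ is a $\mathbb C$-basis of $\mathbb C[X(w)]_r = \spann{\Std_r}$. The set-theoretic partition from the previous paragraph expresses this basis as a disjoint union of the subsets $\Std_{\underline\theta}$, so the subspaces $\spann{\Std_{\underline\theta}}$ are spanned by disjoint subsets of a linearly independent set. Consequently their sum is direct and equals $\spann{\Std_r}$, giving the required decomposition.

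I do not anticipate any real obstacle here: the nontrivial content is entirely absorbed into Proposition \ref{proposition:PartitionofDegRStandard} (which ensures that the tuple of heads of a standard monomial is again standard) and into the basis theorem. What might superficially look like a subtlety — that $\Std_{\underline\theta}$ could overlap for different $\underline\theta$, or that the partition might miss some standard monomials — is ruled out directly by the uniqueness of heads from Proposition \ref{proposition:PartitionofHasse}.
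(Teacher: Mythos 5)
Your argument is correct and matches the paper's own (very brief) proof, which likewise reduces everything to Proposition \ref{proposition:PartitionofDegRStandard} (ensuring the tuple of heads is standard) together with the uniqueness of heads and the standard monomial basis theorem. You have simply spelled out the details that the paper leaves implicit.
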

\begin{proof}
This is immediate by Proposition \ref{proposition:PartitionofDegRStandard} since each degree r standard monomial $\pl_{\tau_1} \cdots \pl_{\tau_r} \in \Std_r$ has a unique standard monomial  $\pl_{\theta_{\tau_1}} \cdots \pl_{\theta_{\tau_r}}$ such that $(\theta_{\tau_1},\ldots,\theta_{\tau_r}) \in \He_{L,r}^{std}$ is the degree r head of $(\tau_1,\ldots,\tau_r)$.
\end{proof}

When the degree is equal to 1 the $\spann{\Std_{\underline{\theta}}}$ with $\underline{\theta} \in \He_{L,1}(=\He_L)$ are $L$-stable, and in fact are irreducible $L$-modules(cf. Remark \ref{remark:decompDegree1}). Our initial hope was that this might extend to higher degrees. Unfortunately, when $r>1$, it is no longer the case that the $\spann{\Std_{\underline{\theta}}}$ are $L$-stable for all $\underline{\theta} \in \He_{L,r}^{std}$. This is due to the interaction between the $L$-action and the standard monomial straightening process. To correct for this lack of $L$-stability we introduce a partial order on the set of degree r heads, inspired by the straightening process, which will allow us to introduce new subspaces of $\spann{\Std_r}$ that are $L$-stable.

\begin{definition}
\label{definition:straighteningPartialOrder}
We now define a partial order on the set of degree $r$ heads $\He_{L,r}$ that, as we will see in Theorem \ref{theorem:degrStraightenPartialOrder}, is closely related to the straightening rule and hence shall denote it $\geq_{str}$. Define $(\theta_1,\ldots,\theta_r) >_{str} (\theta'_1,\ldots,\theta'_r)$ if there exists a $q\leq r$ such that $\theta_n = \theta'_n$ for all $n<q$ and $\theta_q > \theta'_q$ . Equality occurs if $\theta_n = \theta'_n$ for all $1\leq n \leq r$. This is just the lexicographic order on these sequences, with the order on each individual entry being the Bruhat order.
\end{definition}

\begin{definition}
\label{definition:StdTheta}
Let $\underline{\theta} \in \He_{L,r}^{std}$. Define 
\begin{center}
$\Std_{\underline{\theta}}^{\geq_{str}} = \left\{\pl_{\tau_1} \cdots \pl_{\tau_r} \in \Std_r \mid  (\theta_{\tau_1},\ldots,\theta_{\tau_r}) \geq_{str} \underline{\theta} \right\}$
\end{center}
and 
\begin{center}
$\Std_{\underline{\theta}}^{>_{str}} = \left\{\pl_{\tau_1} \cdots \pl_{\tau_r} \in \Std_r \mid  (\theta_{\tau_1},\ldots,\theta_{\tau_r}) >_{str} \underline{\theta} \right\}$.
\end{center}
\end{definition}

\begin{remark}
\label{remark:relationStdTheta}
Note that 
\begin{center}
$\Std_{\underline{\theta}} = \Std_{\underline{\theta}}^{\geq_{str}} \setminus \Std_{\underline{\theta}}^{>_{str}}$
\end{center}
which implies that we have the following isomorphism of vector spaces
\begin{center}
$\spann{\Std_{\underline{\theta}}} \cong \spann{\Std_{\underline{\theta}}^{\geq_{str}}} / \spann{\Std_{\underline{\theta}}^{>_{str}}}$.
\end{center}
\end{remark}

\noindent The goal of the next section is to show that $\spann{\Std_{\underline{\theta}}^{\geq_{str}}}$ and $\spann{\Std_{\underline{\theta}}^{>_{str}}}$ are both $L$-stable.

\subsection{The relation between \texorpdfstring{$\geq_{str}$}{the partial order} and the \texorpdfstring{$\mathfrak{l}$}{l}-action}
\label{subsec:PartialOrderLieLaction}

Let $E_{ij}$ be the $N\times N$ matrix with a $1$ in the $(i,j)$th entry, and zero in all other entries. The action of $\mathfrak{l} \subset \mathfrak{gl}_{N}$ on $\mathbb{C}[X(w)]$ is induced by the action of $\mathfrak{gl}_{N}$ on $\mathbb{C}[X(w)]$. The generators of $\mathfrak{l}$ as a Lie algebra are
\begin{center}
$X_{i}=E_{i i+1}$, $\quad X_{-i}=E_{i+1 i}$, $\quad H_{i}=E_{ii}$ for $i\in R_Q$
\end{center}
satisfying the relations
\begin{center}
$\begin{array}{rl}
[H_{i},H_{j}] & = 0 \\
\left[H_{i}, X_{\pm j}\right] & = \pm (\delta_{i j} - \delta_{i j+1}) X_{\pm j} \\
\left[X_{i}, X_{-j}\right] & = \delta_{ij}(H_{i} - H_{i+1}) \\
(adX_{\pm j})^{1-a_{ij}}X_{\pm i} & =0
\end{array}$
\end{center}
where $\delta_{ij}$ is the Kronecker delta and $a_{ij}=2\delta_{ij} - \delta_{i-1j}-\delta_{i+1j}$. Let $\tau=(i_1,\ldots,i_d) \in H_w$ and denote the integers $\{1,\ldots,N\} \setminus \{i_1,\ldots,i_d\}$ by $j_1,\ldots,j_{N-d}$(arranged in ascending order). As in Remark \ref{remark:weightPlucker} we identify the Pl\"{u}cker coordinate $\pl_{\tau} \in \mathbb{C}[X(w)]$ with the element $e_{j_1}\wedge\cdots\wedge e_{j_{N-d}} \otimes (e_1 \wedge \cdots \wedge e_N)^{*} \in \bigwedge^{N-d}\mathbb{C}^{N} \otimes (det^{1}_N)^{*}$. Using this identification and the fact that 
\begin{center}
$E_{ij} e_k  = \delta_{jk}e_{i}$
\end{center}
we may calculate the action of the algebra generators of $\mathfrak{l}$ on a Pl\"{u}cker coordinate.

Let $\pl_{\tau_1} \cdots \pl_{\tau_r} \in \Std_r$. Then for $i\in R_Q$ the action on a degree r standard monomial is given by
\begin{center}
$X_{\pm i}(\pl_{\tau_1} \cdots \pl_{\tau_r}) = \displaystyle \sum_{j=1}^{r} \pl_{\tau_1} \cdots X_{\pm i}(\pl_{\tau_j}) \cdots \pl_{\tau_r}$.
\end{center}
\begin{center}
$H_{i}(\pl_{\tau_1} \cdots \pl_{\tau_r}) = \displaystyle \sum_{j=1}^{r} \pl_{\tau_1} \cdots H_{i}(\pl_{\tau_j}) \cdots \pl_{\tau_r}$.
\end{center}

\noindent Then 
\begin{center}
\[
X_{i}(\pl_{\tau_j}) = 
\begin{cases}
        \pl_{s_{\alpha_i}\tau_j} & \textrm{if } \tau_j\textrm{ has an entry equal to }i\textrm{ and no entry equal to }i+1 \\
        0 & \text{otherwise }\\
\end{cases}
\]
\end{center}
where in the single nonzero case $s_{\alpha_i}\tau_j$ is obtained from $\tau_j$ by replacing $i$ with $i+1$. And 
\begin{center}
\[
X_{-i}(\pl_{\tau_j}) = 
\begin{cases}
        \pl_{s_{\alpha_i}\tau_j} & \textrm{if } \tau_j\textrm{ has an entry equal to }i+1\textrm{ and no entry equal to }i \\
        0 & \text{otherwise }\\
\end{cases}
\]
\end{center}
where in the single nonzero case $s_{\alpha_i}\tau_j$ is obtained from $\tau_j$ by replacing $i+1$ with $i$.

\noindent Finally
\begin{center}
\[
H_{i}(\pl_{\tau_j}) = 
\begin{cases}        
        A_{i,j} \pl_{\tau_j} & \textrm{if }\tau_j\textrm{ has no entry equal to }i \\
        0 & \textrm{otherwise} \\
\end{cases}
\]
\end{center}
where $A_{i,j}$ is some constant in $\mathbb{C}$ whose value depends on $i$ and $\tau_j$.

\begin{remark}
\label{remark:LStability}
We are primarily interested in these results for checking the $L$-stability of subspaces of $\mathbb{C}[X(w)]$, and since such a subspace is $L$-stable if and only if it is $\mathfrak{l}$-stable for the induced action, we may reduce to checking stability under the Lie algebra action. The benefit of this is that the Lie algebra action is easier to calculate. Note also that the action of the $H_i$ on a Pl\"{u}cker coordinate always results in either zero or a constant times the Pl\"{u}cker coordinate itself, and thus a subspace that has a basis of Pl\"{u}cker coordinates will always be stable under the action of the $H_i$.
\end{remark}

We would now like to investigate the interplay between the above action, the straightening algorithm, and the partial order described in Section \ref{subsec:DegreeRHeadsPartialOrder}. 

\begin{remark}
\label{remark:lactionSameHead}
Let $i\in R_Q$, $\tau \in H_w$. Suppose $X_{\pm i}(\pl_{\tau})$ is nonzero. Then $\pl_{s_{\alpha_i}\tau}=X_{\pm i}(\pl_{\tau})$. In the case of $X_{i}$ we saw that $s_{\alpha_i}\tau$ is obtained from $\tau$ by replacing $i$ with $i+1$, and in the case of $X_{-i}$ we saw that $s_{\alpha_i}\tau$ is obtained from $\tau$ by replacing $i+1$ with $i$. In both cases $i$ and $i+1$ are in the same block since $i \in R_Q$ which implies that $\Cl_{\tau}=\Cl_{s_{\alpha_i}\tau}$. Thus $\tau$ and $s_{\alpha_i}\tau$ have the same head.
\end{remark}

\begin{lemma}
\label{lemma:deg2StraightenPartialOrder}
Let $\tau =(i_1,\ldots,i_d),\phi=(j_1,\ldots,j_d) \in H_w$ with $\tau \ngeq \phi$ in the Bruhat order. If
\begin{center}
$\pl_{\tau}\pl_{\phi} = \displaystyle \sum_{\sigma \in [\tau,\phi]} \pm \pl_{\alpha^\sigma}\pl_{\beta^\sigma}$
\end{center}
is the expression for $\pl_{\tau}\pl_{\phi}$ after a single shuffle, then for every $\sigma \in [\tau,\phi]$ such that $\pl_{\alpha^\sigma} \neq 0$ and $\pl_{\beta^\sigma} \neq 0$ we have:
\begin{enumerate}
\item $\alpha^\sigma > \tau$ and $\beta^\sigma < \phi$.
\item The heads $\theta_{\beta^\sigma}=\theta_{\phi}$ if and only if $\theta_{\alpha^\sigma}=\theta_{\tau}$. Otherwise $\theta_{\alpha^\sigma}>\theta_{\tau}$ and $\theta_{\beta^\sigma}<\theta_{\phi}$.
\end{enumerate}
\end{lemma}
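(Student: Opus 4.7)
The plan is to dispose of (1) quickly as an instance of the ordering property of the shuffle recorded in Section~\ref{subsec:straighteningAlgorithm}, and then derive (2) by tracking how block multiplicities (equivalently, classes) evolve under a single shuffle.

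For (2), the pivotal observation is that a shuffle redistributes the entries of $\tau$ and $\phi$ without altering the total multiset: since $\sigma$ is a permutation of $\{i_1,\ldots,i_t,j_t,\ldots,j_d\}$, the concatenation $\alpha^\sigma \cup \beta^\sigma$ (as multisets) equals $\tau \cup \phi$. Intersecting with each block $\Bl_{L,k}$ then yields, for every $k$,
\[
|\alpha^\sigma \cap \Bl_{L,k}| + |\beta^\sigma \cap \Bl_{L,k}| = |\tau \cap \Bl_{L,k}| + |\phi \cap \Bl_{L,k}|.
\]
This block-count conservation law is the heart of the argument.

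Next, I would apply Proposition~\ref{proposition:PartitionofDegRStandard} with $r=2$ to the relations $\alpha^\sigma \geq \tau$ and $\phi \geq \beta^\sigma$ given by (1), obtaining the weak inequalities $\theta_{\alpha^\sigma} \geq \theta_\tau$ and $\theta_\phi \geq \theta_{\beta^\sigma}$. Then, invoking Corollary~\ref{corollary:classDeterminesHeadEquality}, each head equality $\theta_{\alpha^\sigma} = \theta_\tau$ becomes an equality of classes; since elements of $W^{P}$ are strictly increasing and the blocks are intervals, this is in turn equivalent to $|\alpha^\sigma \cap \Bl_{L,k}| = |\tau \cap \Bl_{L,k}|$ for every $k$. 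By the displayed identity, this is equivalent to the analogous block-count equality between $\beta^\sigma$ and $\phi$, which, again by Corollary~\ref{corollary:classDeterminesHeadEquality}, is equivalent to $\theta_{\beta^\sigma} = \theta_\phi$. Combining this equivalence with the weak inequalities supplies exactly the dichotomy in (2): either both pairs of heads coincide, or both inequalities are strict.

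The main obstacle I anticipate is not a hard theorem but careful bookkeeping: writing $\alpha^\sigma$ and $\beta^\sigma$ explicitly from the shuffle, checking that the ``$\uparrow$''\nobreakdash-reordering and the cases with $\pl_{\alpha^\sigma}=0$ or $\pl_{\beta^\sigma}=0$ do not obstruct the multiset identity for nonzero terms, and verifying the block-count conservation in a way that reads cleanly. Once that is in place, the translation from classes to heads is immediate from Corollary~\ref{corollary:classDeterminesHeadEquality}, and Proposition~\ref{proposition:PartitionofDegRStandard} delivers the weak half for free.
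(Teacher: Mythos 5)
Your proof is correct and follows essentially the same route as the paper: both rest on the fact that a shuffle permutes the multiset $\{i_1,\ldots,i_t,j_t,\ldots,j_d\}$, so block membership counts are conserved, and then translate between heads, classes, and block counts via Corollary~\ref{corollary:classDeterminesHeadEquality}, finishing with the weak inequalities from Proposition~\ref{proposition:PartitionofDegRStandard}. Your packaging as an explicit ``block-count conservation law'' $|\alpha^\sigma \cap \Bl_{L,k}| + |\beta^\sigma \cap \Bl_{L,k}| = |\tau \cap \Bl_{L,k}| + |\phi \cap \Bl_{L,k}|$ is a somewhat cleaner statement of the same observation that the paper reaches by contradiction, and it makes the symmetry between the two directions of the ``if and only if'' transparent rather than leaving the converse to ``an analogous argument.''
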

\begin{proof}
(a) The fact that $\alpha^\sigma > \tau$ and $\beta^\sigma < \phi$ is implied by the details of the straightening process discussed in Section \ref{subsec:straighteningAlgorithm}. 

\noindent(b) Fix a $\sigma \in [\tau,\phi]$. Then we have that 
\begin{center}
$\alpha^\sigma = (\sigma(i_1),\ldots,\sigma(i_t),i_{t+1},\ldots,i_d)\uparrow$

$\beta^\sigma = (j_{1},\ldots,j_{t-1}, \sigma(j_t),\ldots,\sigma(j_d))\uparrow$
\end{center}

Suppose that $\theta_{\beta^\sigma}=\theta_{\phi}$. By Corollary \ref{corollary:classDeterminesHeadEquality} this implies that $\Cl_{\beta^{\sigma}}=\Cl_{\phi}$, that is the class of $(j_1,\ldots,j_d)$ equals the class of $(j_{1},\ldots,j_{t-1}, \sigma(j_t),\ldots,\sigma(j_d))\uparrow$. But this implies that the class of $(j_t,\ldots,j_d)$ equals the class of $(\sigma(j_t),\ldots,\sigma_{1}(j_d))$. Thus these two sequences have the same number of entries in $\Bl_{L,k}$ for $1 \leq k \leq \dprl{L}$.

Suppose now that the class of $(\sigma(i_1),\ldots,\sigma(i_t))\uparrow$ does not equal the class of $(i_1,\ldots,i_t)$. This implies that these two sequences have a different number of entries in $\Bl_{L,j}$ for some $1 \leq j \leq \dprl{L}$. But this, combined with the fact that $(j_t,\ldots,j_d)$ and $(\sigma(j_t),\ldots,\sigma(j_d))$ have the same number of entries in $\Bl_{L,j}$, means that $\{\sigma(i_1),\ldots,\sigma(i_t),\sigma(j_t),\ldots,\sigma(j_d)\}$ and $\{i_1,\ldots,i_t,j_t,\ldots,j_d\}$ have a different number of entries in $\Bl_{L,j}$. This is a contradiction of the fact that the set $\{\sigma(i_1),\ldots,\sigma(i_t),\sigma(j_t),\ldots,\sigma(j_d)\}$ is a permutation of the set $\{i_1,\ldots,i_t,j_t,\ldots,j_d\}$.

Thus the class of $(\sigma(i_1),\ldots,\sigma(i_t))\uparrow$ equals the class of $(i_1,\ldots,i_t)$. This implies the class of $\alpha^{\sigma}=(i_{1},\ldots,i_{t-1}, \sigma(i_t),\ldots,\sigma(i_d))\uparrow$ equals the class of $\tau = (i_1,\ldots,i_d)$. But by Corollary \ref{corollary:classDeterminesHeadEquality} this implies $\theta_{\alpha^{\sigma}}=\theta_{\tau}$.

The converse follows by an analogous argument.

To finish the proof of this lemma we note that by Proposition \ref{proposition:PartitionofDegRStandard}, $\beta^\sigma < \phi$ implies that $\theta_{\beta^\sigma}\leq \theta_{\phi}$ and $\alpha^\sigma > \tau$ implies that $\theta_{\alpha^\sigma} \geq \theta_{\tau}$. So if $\theta_{\beta^\sigma} \neq\theta_{\phi}$ and $\theta_{\alpha^\sigma} \neq\theta_{\tau}$ we must have $\theta_{\beta^\sigma} < \theta_{\phi}$ and $\theta_{\alpha^\sigma} > \theta_{\tau}$.
\end{proof}

\begin{theorem}
\label{theorem:degrStraightenPartialOrder}
Let $\tau_1,\ldots,\tau_r \in H_w$  and suppose
\begin{center}
$\pl_{\tau_1} \cdots \pl_{\tau_r} = \displaystyle \sum_{\pl_{\gamma_1} \cdots \pl_{\gamma_r} \in \Std_r} A_{\gamma_1,\ldots,\gamma_r}\pl_{\gamma_1} \cdots \pl_{\gamma_r}$ with $A_{\gamma_1,\ldots,\gamma_r}\in \mathbb{C}$
\end{center}
is the expression for $\pl_{\tau_1} \cdots \pl_{\tau_r}$ as a sum of standard monomials, then $(\theta_{\gamma_1},\ldots,\theta_{\gamma_r}) \geq_{str} (\theta_{\tau_1},\ldots,\theta_{\tau_r})$ for all $\pl_{\gamma_1} \cdots \pl_{\gamma_r} \in \Std_r$ such that $A_{\gamma_1,\ldots,\gamma_r}\neq 0$.
\end{theorem}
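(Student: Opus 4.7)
The plan is to prove the result by tracking a natural invariant through the iterative straightening process of Section \ref{subsec:straighteningAlgorithm}. Specifically, I will show that at every stage of the algorithm, each (possibly non-standard) intermediate monomial $\pl_{\mu_1}\cdots\pl_{\mu_r}$ appearing with nonzero coefficient in the expansion of $\pl_{\tau_1}\cdots\pl_{\tau_r}$ satisfies $(\theta_{\mu_1},\ldots,\theta_{\mu_r}) \geq_{str} (\theta_{\tau_1},\ldots,\theta_{\tau_r})$. The invariant holds trivially at the outset (the initial monomial is its own starting point), and once it is verified to be preserved by a single shuffle, the finiteness of the straightening process forces it to hold for every standard monomial in the final expansion, yielding the theorem.

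To verify preservation under a single shuffle, suppose the intermediate monomial $\pl_{\mu_1}\cdots\pl_{\mu_r}$ satisfies the invariant, and consider a shuffle applied at position $k$ where $\mu_k \not\geq \mu_{k+1}$. Writing $\pl_{\mu_k}\pl_{\mu_{k+1}} = \sum_\sigma \pm \pl_{\alpha^\sigma}\pl_{\beta^\sigma}$, each resulting term has the form $\pl_{\mu_1}\cdots\pl_{\mu_{k-1}}\pl_{\alpha^\sigma}\pl_{\beta^\sigma}\pl_{\mu_{k+2}}\cdots\pl_{\mu_r}$, whose head sequence differs from $(\theta_{\mu_1},\ldots,\theta_{\mu_r})$ only in positions $k$ and $k+1$. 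Lemma \ref{lemma:deg2StraightenPartialOrder}(b) now supplies the crucial dichotomy: either $\theta_{\alpha^\sigma}=\theta_{\mu_k}$ and $\theta_{\beta^\sigma}=\theta_{\mu_{k+1}}$, in which case the head sequence is unchanged; or else $\theta_{\alpha^\sigma}>\theta_{\mu_k}$ strictly, in which case the first $k-1$ entries remain unchanged while the $k$-th entry strictly increases, so the new head sequence strictly dominates the old one under the lexicographic order $>_{str}$. In both cases the new head sequence is $\geq_{str} (\theta_{\mu_1},\ldots,\theta_{\mu_r})$, hence by transitivity of $\geq_{str}$ also $\geq_{str} (\theta_{\tau_1},\ldots,\theta_{\tau_r})$.

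Since the straightening process terminates in finitely many shuffles, the expansion eventually consists entirely of standard monomials, and the invariant yields the desired inequality for every $\pl_{\gamma_1}\cdots\pl_{\gamma_r}$ with $A_{\gamma_1,\ldots,\gamma_r}\neq 0$. The only nontrivial content is the per-shuffle analysis above, which reduces directly to Lemma \ref{lemma:deg2StraightenPartialOrder}(b); that lemma already encodes the essential compatibility between the shuffle operation and the block-based head assignment. Consequently, no new technical obstacle is anticipated beyond invoking that lemma together with the lexicographic structure of $\geq_{str}$.
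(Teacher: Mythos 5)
Your proof is correct and follows essentially the same route as the paper: both arguments reduce to the per-shuffle analysis supplied by Lemma \ref{lemma:deg2StraightenPartialOrder}(b) (unchanged heads vs.\ strict increase in position $k$), combine it with the lexicographic structure of $\geq_{str}$, and then conclude by induction over the finitely many shuffles via transitivity of $\geq_{str}$. You merely make the inductive invariant and the appeal to transitivity more explicit than the paper's terse phrasing.
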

\begin{proof}
The straightening of a degree r nonstandard monomial is the result of inductively applying shuffles. Thus we need only show that after applying a single shuffle to an arbitrary nonstandard monomial the inequality holds for all the heads of the monomials resulting from the shuffle.

Let $\pl_{\phi_{1}} \cdots \pl_{\phi_{r}}$ be a degree r nonstandard  monomial. Since $\pl_{\phi_{1}} \cdots \pl_{\phi_{r}}$ is not standard there is a $k$, such that $\phi_{k} \ngeq \phi_{k + 1}$. After a single shuffle we have
\begin{center}
$\pl_{\phi_{1}} \cdots \pl_{\phi_{r}} = \displaystyle \sum_{\sigma \in [\phi_k,\phi_{k+1}]} \pm \pl_{\phi_1} \cdots \pl_{\phi_{k-1}} \pl_{\alpha^\sigma} \pl_{\beta^\sigma} \pl_{\phi_{k+2}} \cdots \pl_{\phi_r}$.
\end{center}

By Lemma \ref{lemma:deg2StraightenPartialOrder} we know for all $\sigma \in [\phi_k,\phi_{k+1}]$ that either $\theta_{\beta^\sigma}=\theta_{\phi_{k+1}}$ and $\theta_{\alpha^\sigma}=\theta_{\phi_{k}}$ or $\theta_{\alpha^\sigma}>\theta_{\phi_k}$ and $\theta_{\beta^\sigma}<\theta_{\phi_{k+1}}$. In either case, we have $(\theta_{\phi_1},\ldots,\theta_{\phi_{k-1}}, \theta_{\alpha^\sigma}, \theta_{\beta^\sigma}, \theta_{\phi_{k+2}},\ldots, \theta_{\phi_r}) \geq_{str} (\theta_{\phi_1},\ldots,\theta_{\phi_r})$.
\end{proof}

\begin{corollary}
\label{corollary:StdThetaLStable}
Let $\underline{\theta} \in \He_{L,r}^{std}$. Then $\spann{\Std_{\underline{\theta}}^{\geq_{str}}}$ and $\spann{\Std_{\underline{\theta}}^{>_{str}}}$ are $L$-stable.
\end{corollary}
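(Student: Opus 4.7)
The plan is to reduce $L$-stability to $\mathfrak{l}$-stability via Remark \ref{remark:LStability}, and then to check stability under each Lie algebra generator $X_{\pm i}$ and $H_i$ with $i \in R_Q$. Since the $H_i$ act on any Plücker coordinate by either zero or a scalar, the subspaces $\spann{\Std_{\underline{\theta}}^{\geq_{str}}}$ and $\spann{\Std_{\underline{\theta}}^{>_{str}}}$, having bases of standard monomials, are automatically stable under $H_i$. So the real work is showing stability under the raising and lowering operators $X_{\pm i}$.

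Fix a standard monomial $\pl_{\tau_1} \cdots \pl_{\tau_r} \in \Std_{\underline{\theta}}^{\geq_{str}}$, i.e. $(\theta_{\tau_1},\ldots,\theta_{\tau_r}) \geq_{str} \underline{\theta}$. I will apply $X_{\pm i}$ using the Leibniz formula from Section \ref{subsec:PartialOrderLieLaction}, obtaining a sum
\[
X_{\pm i}(\pl_{\tau_1} \cdots \pl_{\tau_r}) = \sum_{j=1}^{r} \pl_{\tau_1} \cdots \pl_{s_{\alpha_i}\tau_j} \cdots \pl_{\tau_r},
\]
where some terms may vanish. Crucially, since $i \in R_Q$, Remark \ref{remark:lactionSameHead} says $\theta_{s_{\alpha_i}\tau_j} = \theta_{\tau_j}$, so every nonzero (but not necessarily standard) monomial on the right has the same tuple of heads $(\theta_{\tau_1},\ldots,\theta_{\tau_r})$ as the original, in particular still $\geq_{str} \underline{\theta}$.

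The next step is to straighten each nonstandard term $\pl_{\tau_1}\cdots \pl_{s_{\alpha_i}\tau_j}\cdots \pl_{\tau_r}$ into a $\mathbb{C}$-linear combination of standard monomials. By Theorem \ref{theorem:degrStraightenPartialOrder}, every standard monomial appearing in the straightened expression has degree $r$ head $\geq_{str}$ the head of the term being straightened, which is $\geq_{str} \underline{\theta}$ by the previous paragraph. By transitivity of $\geq_{str}$, every standard monomial appearing in $X_{\pm i}(\pl_{\tau_1} \cdots \pl_{\tau_r})$ lies in $\Std_{\underline{\theta}}^{\geq_{str}}$. Thus $X_{\pm i}$ preserves $\spann{\Std_{\underline{\theta}}^{\geq_{str}}}$. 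The identical argument, starting instead from a monomial in $\Std_{\underline{\theta}}^{>_{str}}$ and using that $\geq_{str}$ composed with $>_{str}$ gives $>_{str}$, shows that $X_{\pm i}$ also preserves $\spann{\Std_{\underline{\theta}}^{>_{str}}}$. This completes the proof.

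The main (already done) obstacle in this argument is Theorem \ref{theorem:degrStraightenPartialOrder}, which packages the compatibility between the straightening algorithm and the lexicographic order $\geq_{str}$ on head tuples; once that theorem is in hand, together with the crucial fact from Remark \ref{remark:lactionSameHead} that $X_{\pm i}$ for $i \in R_Q$ does not change heads of individual Plücker coordinates, the $L$-stability follows almost formally.
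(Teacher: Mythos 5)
Your proof is correct and follows essentially the same route as the paper's: reduce $L$-stability to $\mathfrak{l}$-stability, observe that $X_{\pm i}$ for $i\in R_Q$ preserves the head tuple of each Pl\"{u}cker factor (Remark \ref{remark:lactionSameHead}), and then invoke Theorem \ref{theorem:degrStraightenPartialOrder} to control the heads after straightening. If anything you are slightly more careful than the paper at one step, explicitly invoking transitivity of $\geq_{str}$ (and $\geq_{str}$ composed with $>_{str}$) rather than treating the starting head tuple as equal to $\underline{\theta}$, which tightens the argument.
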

\begin{proof}
Let $\pl_{\tau_1} \cdots \pl_{\tau_r} \in \Std_{\underline{\theta}}^{\geq_{str}}$. Let $i \in R_Q$, then by definition
\begin{center}
$X_{\pm i}(\pl_{\tau_1} \cdots \pl_{\tau_r}) = \displaystyle \sum_{j=1}^{r} \pl_{\tau_1} \cdots X_{\pm i}(\pl_{\tau_j}) \cdots \pl_{\tau_r}$.
\end{center}

By Remark \ref{remark:lactionSameHead}, all the monomials on the right hand side that are not equal to zero have heads equal to $(\theta_{\tau_1},\ldots,\theta_{\tau_r}) =_{str} \underline{\theta}$. By Theorem \ref{theorem:degrStraightenPartialOrder}, after straightening the monomials on the right hand side, all the resulting terms will be sums of standard monomials with heads that are greater than or equal, in the partial order $\geq_{str}$, to $(\theta_{\tau_1},\ldots,\theta_{\tau_r})$, which equals $\underline{\theta}$. Thus they will be in $\spann{\Std_{\underline{\theta}}^{\geq_{str}}}$. Thus $\spann{\Std_{\underline{\theta}}^{\geq_{str}}}$ is $\mathfrak{l}$-stable, which implies it is $L$-stable. The same argument shows that $\spann{\Std_{\underline{\theta}}^{>_{str}}}$ is $L$-stable.

\end{proof}

\subsection{The skew semistandard tableau associated to a degree r standard monomial}
\label{subsec:SkewTableauxofStdMon}

Given a degree r standard monomial we would like to associate it to a collection of skew semistandard Young tableau.

Let $\underline{\tau}=(\tau_1,\ldots,\tau_r)$ such that $\pl_{\tau_1} \cdots \pl_{\tau_r} \in \Std_r$. Define the semistandard tableaux $\tab_{\underline{\tau}}$ on the diagram $(r^{d})$ by letting the columns of $\tab_{\underline{\tau}}$ correspond to the $\tau_i$ in $\underline{\tau}$, but with their order reversed. Thus the standardness of $\underline{\tau}$ implies that this tableaux is semistandard. 

\begin{example}
\label{example:skewtableauxFirst}
Let $d=3$ and $N=9$. Consider $w=(3,6,9)\in W^{P_{3}}$. Suppose we have $\underline{\tau}=((3,5,9),(2,3,8),(1,2,4))$. Then
\begin{center}
\ytableausetup{centertableaux}
$\tab_{\underline{\tau}}=$\begin{ytableau}
1 & 2 & 3 \\
2 & 3 & 5 \\
4 & 8 & 9
\end{ytableau}
\end{center}
\end{example}

\begin{definition}
\label{definition:skewsubtab}
We would now like to associate a skew semistandard tableaux $\tab_{\underline{\tau}}^{(k)}$ to $\underline{\tau}$ for each $\Bl_{L,k}$. To do this we start by fixing a $k$, $1\leq k \leq \dprl{L}$. Then, define $\tab_{\underline{\tau}}^{(k)}$ to be the skew semistandard tableaux created by deleting all boxes with values not in $\Bl_{L,k}$, subtracting $a_{k-1}$ from every remaining box, and then deleting all empty rows and columns from the tableaux. Then $\tab_{\underline{\tau}}^{(k)}$ will have boxes with values ranging from 1 to $a_k - a_{k-1}=N_k$. 
\end{definition}

It is not immediately apparent that such an operation will result in $\tab_{\underline{\tau}}^{(k)}$ having a shape that is a skew diagram. However the only way that the shape could fail to be a skew diagram is if one of two possibilities occur:
\begin{enumerate}[label=(\roman*)]
\item For some $i<j$, the maximum column index containing a value in $\Bl_{L,k}$ in row i is less than the maximum column index containing a value in $\Bl_{L,k}$ in row j.

\item For some $i<j$, the minimum column index containing a value in $\Bl_{L,k}$ in row i is less than the minimum column index containing a value in $\Bl_{L,k}$ in row j.

\end{enumerate}

Verifying that neither of these can occur is a simple exercise.

\begin{definition}
\label{definition:skewsubtabpartitions}
Thus the shape of $\tab_{\underline{\tau}}^{(k)}$ is of the form $\nicefrac{\lambda_{\underline{\tau}}^{(k)}}{\mu_{\underline{\tau}}^{(k)}}$ for some partitions $\mu_{\underline{\tau}}^{(k)},$ $\lambda_{\underline{\tau}}^{(k)}$ with $\mu_{\underline{\tau}}^{(k)} \subseteq \lambda_{\underline{\tau}}^{(k)}$. We will always require that $\mu_{\underline{\tau}}^{(k)},$ $\lambda_{\underline{\tau}}^{(k)}$ are the unique choice of partitions so that the skew diagram $\nicefrac{\lambda_{\underline{\tau}}^{(k)}}{\mu_{\underline{\tau}}^{(k)}}$ has no empty rows or columns.
\end{definition}

The total number of boxes in the skew partitions $\nicefrac{\lambda_{\underline{\tau}}^{(k)}}{\mu_{\underline{\tau}}^{(k)}}$ for $1\leq k \leq \dprl{L}$ is equal to the number of boxes in the tableaux $\tab_{\underline{\tau}}$. Thus we have
\begin{equation}
\label{equation:sumofboxesallpartitions}
(|\lambda_{\underline{\tau}}^{(1)}|-|\mu_{\underline{\tau}}^{(1)}|)+\cdots + (|\lambda_{\underline{\tau}}^{(\dprl{L})}|-|\mu_{\underline{\tau}}^{(\dprl{L})}|)=rd
\end{equation}

\begin{example}
\label{example:skewtableaux}
Let us construct the partitions and tableaux associated to the standard monomial $\pl_{\tau_1}\pl_{\tau_2}\pl_{\tau_3}$ with $\underline{\tau}:=(\tau_1,\tau_2,\tau_3)=((3,5,9),(2,3,8),(1,2,4))$ as in Example \ref{example:skewtableauxFirst}. In Example \ref{example:blocks}, we saw that choosing $Q=Q_w$ gave us $\Bl_{L,1}=(1,2,3)$, $\Bl_{L,2}=(4,5,6)$, and $\Bl_{L,3}=(7,8,9)$. Then 
\begin{center}
\ytableausetup{centertableaux}
$\tab_{\underline{\tau}}=$\begin{ytableau}
1 & 2 & 3 \\
2 & 3 & 5 \\
4 & 8 & 9
\end{ytableau}
\;
\end{center}
and deleting boxes from different blocks gives the three tableau

\begin{center}
\;
\begin{ytableau}
1 & 2 & 3\\
2 & 3
\end{ytableau}
$\qquad$
\begin{ytableau}
\none & \none & \none \\
\none & \none & 5 \\
4 & \none & \none
\end{ytableau}
$\qquad$
\begin{ytableau}
\none & \none & \none \\
\none & \none & \none \\
\none & 8 & 9
\end{ytableau}
\;
\end{center}

\noindent subtracting $a_{k-1}$ from each, respectively, gives

\begin{center}
\;
\begin{ytableau}
1 & 2 & 3\\
2 & 3
\end{ytableau}
$\qquad$
\begin{ytableau}
\none & \none & \none \\
\none & \none & 2 \\
1 & \none & \none
\end{ytableau}
$\qquad$
\begin{ytableau}
\none & \none & \none \\
\none & \none & \none \\
\none & 2 & 3
\end{ytableau}
\;
\end{center}

\noindent and finally deleting empty rows and columns gives

\begin{center}
\;
$\tab_{\underline{\tau}}^{(1)}=$\begin{ytableau}
1 & 2 & 3\\
2 & 3
\end{ytableau}
$\qquad \tab_{\underline{\tau}}^{(2)}=$
\begin{ytableau}
\none & 2 \\
1 & \none 
\end{ytableau}
$\qquad \tab_{\underline{\tau}}^{(3)}=$
\begin{ytableau}
2 & 3
\end{ytableau}
\end{center}

\noindent with $\nicefrac{\lambda_{\underline{\tau}}^{(1)}}{\mu_{\underline{\tau}}^{(1)}}=(3,2)/\emptypart$, $\nicefrac{\lambda_{\underline{\tau}}^{(2)}}{\mu_{\underline{\tau}}^{(2)}}=(2,1)/(1)$, and $\nicefrac{\lambda_{\underline{\tau}}^{(3)}}{\mu_{\underline{\tau}}^{(3)}}=(2)/\emptypart$.

%
\end{example}

\begin{lemma}
\label{lemma:skewTableauxSameHead}
Let $\underline{\theta} := (\theta_1,\ldots,\theta_r)\in \He_{L,r}^{std}$ be a standard degree r head. If $\underline{\tau} := (\tau_1,\ldots,\tau_r),\underline{\gamma}:= (\gamma_1,\ldots,\gamma_r)$ are two sequence such that $(\theta_{\tau_1},\ldots,\theta_{\tau_r})=(\theta_{\gamma_1},\ldots,\theta_{\gamma_r})=\underline{\theta}$, then $\lambda_{\underline{\tau}}^{(k)}=\lambda_{\underline{\gamma}}^{(k)}=\lambda_{\underline{\theta}}^{(k)}$ and $\mu_{\underline{\tau}}^{(k)}=\mu_{\underline{\gamma}}^{(k)}=\mu_{\underline{\theta}}^{(k)}$, for $1\leq k \leq \dprl{L}$ 
\end{lemma}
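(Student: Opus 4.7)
The plan is to show that the shape of $\tab_{\underline{\tau}}^{(k)}$ depends on the sequence $\underline{\tau}$ only through the tuple of classes $(\Cl_{\tau_1},\ldots,\Cl_{\tau_r})$, and then invoke Corollary \ref{corollary:classDeterminesHeadEquality} to conclude that this tuple is in turn determined by the degree $r$ head $\underline{\theta}$. The key observation is that the construction of $\tab_{\underline{\tau}}^{(k)}$ does not care about the actual numerical entries of the $\tau_i$; it only cares about which positions inside $\tab_{\underline{\tau}}$ host an entry lying in the block $\Bl_{L,k}$, and this data is exactly what the classes record.

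To make this precise, I would write $\tau_i = (i_1,\ldots,i_d)$ and $\Cl_{\tau_i} = (u_1,\ldots,u_d)$, so that by Definition \ref{definition:Class} the condition $u_m = k$ is equivalent to $i_m \in \Bl_{L,k}$. Since the box in row $m$ and column $r-i+1$ of $\tab_{\underline{\tau}}$ contains the $m$-th smallest entry of $\tau_i$, the set of boxes surviving the first deletion step in the formation of $\tab_{\underline{\tau}}^{(k)}$ is
\[
S_{\underline{\tau}}^{(k)}\ =\ \bigl\{(m,\,r-i+1)\ \big|\ 1\le i\le r,\ (\Cl_{\tau_i})_m = k\bigr\},
\]
which depends only on $(\Cl_{\tau_1},\ldots,\Cl_{\tau_r})$. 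The remaining two steps, subtracting $a_{k-1}$ from each surviving value and deleting empty rows and columns, do not affect the shape beyond this last deterministic compactification. Combined with the uniqueness clause in Definition \ref{definition:skewsubtabpartitions} (the skew shape with no empty rows or columns is unique), this forces $\lambda_{\underline{\tau}}^{(k)}/\mu_{\underline{\tau}}^{(k)}$ to be determined by $(\Cl_{\tau_1},\ldots,\Cl_{\tau_r})$.

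The hypothesis gives $\theta_{\tau_i}=\theta_{\gamma_i}=\theta_i$ for every $i$, so Corollary \ref{corollary:classDeterminesHeadEquality} yields $\Cl_{\tau_i}=\Cl_{\gamma_i}$ for every $i$. Hence the preceding paragraph implies $\lambda_{\underline{\tau}}^{(k)}=\lambda_{\underline{\gamma}}^{(k)}$ and $\mu_{\underline{\tau}}^{(k)}=\mu_{\underline{\gamma}}^{(k)}$ for each $1\le k\le \dprl{L}$. To identify these partitions with $\lambda_{\underline{\theta}}^{(k)}$ and $\mu_{\underline{\theta}}^{(k)}$, I would simply observe that each $\theta_i\in\He_L$ is its own head (by Proposition \ref{proposition:PartitionofHasse}), and that the standardness of $\underline{\theta}\in\He_{L,r}^{std}$ makes $\pl_{\theta_1}\cdots\pl_{\theta_r}$ an element of $\Std_r$, so that $\tab_{\underline{\theta}}^{(k)}$ is defined. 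The argument above applied with $\underline{\gamma}$ replaced by $\underline{\theta}$ (whose degree $r$ head is $\underline{\theta}$ itself) then yields the equalities $\lambda_{\underline{\tau}}^{(k)}=\lambda_{\underline{\theta}}^{(k)}$ and $\mu_{\underline{\tau}}^{(k)}=\mu_{\underline{\theta}}^{(k)}$.

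I do not expect a real obstacle here; the entire content of the lemma is the single observation that the deletion procedure defining $\tab_{\underline{\tau}}^{(k)}$ is governed by the classes alone. The only mildly delicate point is verifying that the compactification step (deletion of empty rows and columns) is canonical given the set $S_{\underline{\tau}}^{(k)}$, but this is built into Definition \ref{definition:skewsubtabpartitions}.
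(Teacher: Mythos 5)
Your proof is correct and follows essentially the same argument as the paper: both reduce the statement to the observation that the shape of $\tab_{\underline{\tau}}^{(k)}$ depends only on block membership of entries, hence only on $(\Cl_{\tau_1},\ldots,\Cl_{\tau_r})$, and then deduce that these classes are determined by the degree $r$ head. The paper cites Proposition \ref{proposition:PartitionofHasse} directly to get $\Cl_{\theta_i}=\Cl_{\tau_i}=\Cl_{\gamma_i}$, whereas you route through Corollary \ref{corollary:classDeterminesHeadEquality} (a consequence of that proposition) and then separately handle the equality with $\underline{\theta}$ by noting each $\theta_i$ is its own head; this is a cosmetic difference, and your write-up usefully spells out the determinism of the compactification step that the paper leaves implicit.
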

\begin{proof}
By Proposition \ref{proposition:PartitionofHasse} we know that $\Cl_{\theta_i}=\Cl_{\tau_i}=\Cl_{\gamma_i}$ for all $1\leq i \leq r$. Thus for all $1\leq i \leq r$, $1\leq j \leq d$, and $1\leq k \leq \dprl{L}$ we have the $j$th entry of $\theta_{i}$ is in $\Bl_{L,k} \iff$  the $j$th entry of $\tau_{i}$ is in $\Bl_{L,k}\iff$  the $j$th entry of $\gamma_{i}$ is in $\Bl_{L,k}$. As the shape of the associated tableau depend only on the block membership of the entries we are done.
\end{proof}


\begin{definition}
\label{definition:isomorphismSkewTableaux}
Let $\underline{\theta} \in \He_{L,r}^{std}$ be a degree r head. Set $V_k := \mathbb{C}^{N_k}$, $1\leq k \leq \dprl{L}$ . Recall that $\wmodel{\tab}{}\coloneqq e_{\tab}\Upsilon_W^{\lambda / \mu} \in \mathbb{W}^{\lambda / \mu}(\mathbb{C}^{N})$. Define a vector space map on the basis elements of $\spann{\Std_{\underline{\theta}}}$ as follows:
\begin{center}
$\displaystyle \Psi_{\underline{\theta}}:\spann{\Std_{\underline{\theta}}}\longrightarrow \mathbb{W}_{\underline{\theta}} := \mathbb{W}^{\nicefrac{\lambda_{\underline{\theta}}^{(1)}}{\mu_{\underline{\theta}}^{(1)}}}(V_1) \otimes \cdots \otimes \mathbb{W}^{\nicefrac{\lambda_{\underline{\theta}}^{(\dprl{L})}}{\mu_{\underline{\theta}}^{(\dprl{L})}}}(V_{\dprl{L}})$

$\pl_{\tau_1}\cdots \pl_{\tau_r} \longmapsto \wmodel{\tab_{\underline{\tau}}^{(1)}}{}\otimes \cdots \otimes \wmodel{\tab_{\underline{\tau}}^{(\dprl{L})}}{}$

\end{center}
where $\underline{\tau} := (\tau_1,\ldots,\tau_r)$.

Lemma \ref{lemma:skewTableauxSameHead}, the definition of the semistandard tableaux $\tab_{\underline{\tau}}^{(k)}$ for $1\leq k \leq \dprl{L}$, and Theorem \ref{theorem:ssytBasisWeylModule} give that this map is well defined and takes basis vectors to basis vectors. 

Subsequently when we refer to the skew Weyl modules in the above tensor product we will write them as $\mathbb{W}^{\nicefrac{\lambda_{\underline{\theta}}^{(k)}}{\mu_{\underline{\theta}}^{(k)}}}$, omitting the $(V_{k})$, so long as no confusion may arise from doing so.
\end{definition}

\begin{proposition}
\label{proposition:isomorphismSkewTableaux}
Let $\underline{\theta} := (\theta_1,\ldots,\theta_r)\in \He_{L,r}^{std}$ be a degree r head. The map $\Psi_{\underline{\theta}}$ is a vector space isomorphism.
\end{proposition}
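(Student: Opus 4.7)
The plan is to leverage the fact, already noted in Definition \ref{definition:isomorphismSkewTableaux}, that $\Psi_{\underline{\theta}}$ carries the standard-monomial basis of $\spann{\Std_{\underline{\theta}}}$ (a basis by Theorem \ref{theorem:stdMonTheoryFundamental}) to basis vectors of $\mathbb{W}_{\underline{\theta}}$ (the tensor basis of Theorem \ref{theorem:ssytBasisWeylModule}). Hence it suffices to show that the induced map on these bases is a bijection, which I would establish by exhibiting an explicit inverse.

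Given a tuple of SSYT $(\tab^{(1)},\ldots,\tab^{(\dprl{L})})$ on the skew shapes $\lambda_{\underline{\theta}}^{(k)}/\mu_{\underline{\theta}}^{(k)}$, I would reverse the procedure of Definition \ref{definition:skewsubtab}: reinsert the deleted empty rows and columns so that each $\tab^{(k)}$ sits inside the $(r^d)$ grid at the positions predetermined by $\underline{\theta}$, namely box $(j,i)$ is declared to belong to block $k$ exactly when the $j$-th entry of $\Cl_{\theta_{r-i+1}}$ equals $k$. Then add $a_{k-1}$ back to each entry of $\tab^{(k)}$ (returning values to $\Bl_{L,k}$), and assemble the resulting blocks into a single filling $\tab$ of the $(r^d)$ shape. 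Finally, read off $\tau_{r-i+1}$ as the $i$-th column of $\tab$, producing $\underline{\tau} := (\tau_1,\ldots,\tau_r)$.

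The heart of the argument is verifying that $\pl_{\tau_1}\cdots \pl_{\tau_r} \in \Std_{\underline{\theta}}$. Within each column $i$, the boxes assigned to block $k$ form a consecutive row-range (because the strictly increasing sequence $\theta_{r-i+1}$ has its block-$k$ entries in consecutive positions), so the column-strictness of $\tab^{(k)}$ survives reinsertion; strict increase across block boundaries is automatic, yielding strictly increasing columns and hence $\tau_i \in W^{P_d}$. Dually, within each row $j$ the boxes assigned to block $k$ form a consecutive column-range, because $\theta_1 \geq \cdots \geq \theta_r$ forces the $j$-th entries---and hence the $j$-th classes---to be weakly decreasing as $i$ decreases from $r$ to $1$; this combined with the row-weak-increase inside each $\tab^{(k)}$ and the strict jumps across blocks yields weak row-increase in $\tab$, i.e.\ $\tau_1 \geq \cdots \geq \tau_r$. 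By construction $\Cl_{\tau_i} = \Cl_{\theta_i}$, so Corollary \ref{corollary:classDeterminesHeadEquality} gives that each $\tau_i$ has head $\theta_i$; and since $\tau_i$ lies in the subdiagram of $\widehat{\mathcal{H}}_w$ whose maximum is $\theta_i$, we get $\tau_i \leq \theta_i \leq w$.

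Inverseness of the two maps on basis vectors is immediate from the construction, since both directions are controlled by the same block-placement data (coming from $\underline{\theta}$) together with the values inside each block. The main technical obstacle is essentially combinatorial bookkeeping---checking that the class-based block pattern on the $(r^d)$ grid is compatible with the SSYT conditions on each skew piece---but this compatibility is a clean consequence of Proposition \ref{proposition:PartitionofDegRStandard} together with the way the blocks of $L$ interact with the Bruhat order on heads.
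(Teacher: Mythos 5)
Your approach is essentially identical to the paper's: both construct an explicit inverse $\Phi_{\underline{\theta}}$ by reversing the dissection of Definition~\ref{definition:skewsubtab} (reinserting empty rows and columns at the positions dictated by $\Cl_{\theta_i}$, adding back $a_{k-1}$, reassembling into a filling of $(r^d)$), then verify semistandardness of the assembled tableau and membership of the resulting monomial in $\Std_{\underline{\theta}}$ via the class/block bookkeeping. Your write-up is somewhat more explicit about the column-strictness and row-weak-increase checks, but this is the same argument as in the paper, not a different route.
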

\begin{proof}
We describe a map $\Phi_{\underline{\theta}}$ going from $\mathbb{W}_{\underline{\theta}}$ to $\spann{\Std_{\underline{\theta}}}$. Let $\wmodel{\tab^{(1)}}{}\otimes \cdots \otimes \wmodel{\tab^{(\dprl{L})}}{}$ be a basis vector of $\mathbb{W}_{\underline{\theta}}$. Then each $\tab^{(k)}$ is a SSYT on the skew diagram $\nicefrac{\lambda_{\underline{\theta}}^{(k)}}{\mu_{\underline{\theta}}^{(k)}}$.

We now perform the reverse of the process described in Definition \ref{definition:skewsubtab}. Firstly, if any empty rows or columns were deleted to form $\tab_{\underline{\theta}}^{(k)}$ we add those empty rows and columns back to $\tab^{(k)}$. We then add $a_{k-1}$ to each box of $\tab^{(k)}$. Finally, we combine the boxes from $\tab^{(1)},\ldots,\tab^{(\dprl{L})}$ into the rectangular tableaux $\tab$ of shape $(r^d)$. 

When comparing two boxes of $\tab$ that are in the same block, the requirements for semistandardness are fulfilled since the individual tableaux associated with each block is a SSYT. When comparing two boxes of $\tab$ that are not in the same block, if these two entries violated semistandardness then the same boxes in $\tab_{\underline{\theta}}$ would violate semistandardness. Thus $\tab$ is a SSYT.

Finally, we define $\underline{\tau}=(\tau_1,\ldots,\tau_r)$ by letting the columns of $\tab_{\underline{\tau}}$ correspond to the $\tau_i$ in $\underline{\tau}$, but with their order reversed. The fact that $\tab$ is semistandard implies that $\underline{\tau}$ is standard. The fact that $\tab(i,j) \in \Bl_{L,k} \iff \tab_{\underline{\theta}}(i,j) \in \Bl_{L,k}$ for all $k$ implies that the $j$th entry in $\theta_{i}$ is in $\Bl_{L,k} \iff$ the $j$th entry in $\tau_{i}$ is in $\Bl_{L,k}$, which implies that $\theta_i=\theta_{\tau_i}$. That is $\pl_{\tau_1}\cdots \pl_{\tau_r} \in \spann{\Std_{\underline{\theta}}}$.

But then it is clear that the map $\Phi_{\underline{\theta}}$ is well defined and in fact the inverse of  $\Psi_{\underline{\theta}}$. This can be trivially verified on the basis vectors. And thus $\Psi_{\underline{\theta}}$ is a vector space isomorphism.
\end{proof}

For $\pl_{\tau_1} \cdots \pl_{\tau_r}\in\Std_{\underline{\theta}}^{\geq_{str}}$ let $\overline{\pl_{\tau_1} \cdots \pl_{\tau_r}}$ denote its class in $\spann{\Std_{\underline{\theta}}^{\geq_{str}}} / \spann{\Std_{\underline{\theta}}^{>_{str}}}$ under the canonical quotient map 
\begin{center}
$\spann{\Std_{\underline{\theta}}^{\geq_{str}}} \rightarrow \spann{\Std_{\underline{\theta}}^{\geq_{str}}} / \spann{\Std_{\underline{\theta}}^{>_{str}}}$.
\end{center}
Then $\{ \overline{\pl_{\tau_1} \cdots \pl_{\tau_r}} \mid \pl_{\tau_1} \cdots \pl_{\tau_r}\in \Std_{\underline{\theta}}\}$ is a basis for $\spann{\Std_{\underline{\theta}}^{\geq_{str}}} / \spann{\Std_{\underline{\theta}}^{>_{str}}}$. 

\noindent Let $\{(\wmodel{\tab_{\underline{\tau}}^{(1)}}{}\otimes \cdots \otimes \wmodel{\tab_{\underline{\tau}}^{(\dprl{L})}}{})^{*} \mid \underline{\tau}=(\tau_1,\ldots,\tau_r) \textrm{ such that } \pl_{\tau_1} \cdots \pl_{\tau_r} \in \Std_{\underline{\theta}}\}$ be the basis of $\mathbb{W}^{*}_{\underline{\theta}}$ dual to the basis of $\mathbb{W}_{\underline{\theta}}$ given by $\{\wmodel{\tab_{\underline{\tau}}^{(1)}}{}\otimes \cdots \otimes \wmodel{\tab_{\underline{\tau}}^{(\dprl{L})}}{} \mid \underline{\tau}=(\tau_1,\ldots,\tau_r) \textrm{ such that } \pl_{\tau_1} \cdots \pl_{\tau_r} \in \Std_{\underline{\theta}}\}$.

\begin{definition}
\label{definition:isomorphismModuleSkewTableaux}
The isomorphism $\Psi_{\underline{\theta}}$ induces a vector space map 

\begin{center}
$\overline{\Psi}_{\underline{\theta}}:\spann{\Std_{\underline{\theta}}^{\geq_{str}}} / \spann{\Std_{\underline{\theta}}^{>_{str}}} \longrightarrow \mathbb{W}^{*}_{\underline{\theta}}$
\end{center}

\begin{center}
$\overline{\pl_{\tau_1} \cdots \pl_{\tau_r}}  \longmapsto \left( \wmodel{\tab_{\underline{\tau}}^{(1)}}{} \otimes \cdots \otimes \wmodel{\tab_{\underline{\tau}}^{(\dprl{L})}}{} \right)^{*}$ 
\end{center}
for $\pl_{\tau_1} \cdots \pl_{\tau_r} \in \Std_{\underline{\theta}}$. 
\end{definition}

\begin{proposition}
\label{proposition:isomorphismModuleSkewTableaux}
The map $\overline{\Psi}_{\underline{\theta}}$ is an isomorphism of vector spaces.
\end{proposition}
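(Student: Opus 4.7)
The plan is short, because almost all the necessary work has already been done. Both the domain $\spann{\Std_{\underline{\theta}}^{\geq_{str}}}/\spann{\Std_{\underline{\theta}}^{>_{str}}}$ and the codomain $\mathbb{W}^{*}_{\underline{\theta}}$ come equipped with distinguished bases indexed by the same finite set, namely $\Std_{\underline{\theta}}$, and $\overline{\Psi}_{\underline{\theta}}$ is defined to be precisely the bijection between these bases (extended by linearity). The entire argument therefore reduces to confirming this basis-to-basis correspondence.

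First I would observe that $\{\overline{\pl_{\tau_1}\cdots \pl_{\tau_r}} \mid \pl_{\tau_1}\cdots \pl_{\tau_r} \in \Std_{\underline{\theta}}\}$ forms a basis of $\spann{\Std_{\underline{\theta}}^{\geq_{str}}}/\spann{\Std_{\underline{\theta}}^{>_{str}}}$, as already asserted just before Definition \ref{definition:isomorphismModuleSkewTableaux}. The justification is immediate: Remark \ref{remark:relationStdTheta} exhibits $\Std_{\underline{\theta}}$ as the set-theoretic complement of $\Std_{\underline{\theta}}^{>_{str}}$ inside $\Std_{\underline{\theta}}^{\geq_{str}}$, while Theorem \ref{theorem:stdMonTheoryFundamental} guarantees that the standard monomials are linearly independent in $\mathbb{C}[X(w)]_r$; together these give that the classes of the elements of $\Std_{\underline{\theta}}$ both span and are linearly independent in the quotient.

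Second, by its very construction the indicated dual set $\{(\wmodel{\tab_{\underline{\tau}}^{(1)}}{}\otimes \cdots \otimes \wmodel{\tab_{\underline{\tau}}^{(\dprl{L})}}{})^{*}\}$ is, by definition, a basis of $\mathbb{W}^{*}_{\underline{\theta}}$, being dual to the basis of $\mathbb{W}_{\underline{\theta}}$ furnished by Proposition \ref{proposition:isomorphismSkewTableaux}. The formula defining $\overline{\Psi}_{\underline{\theta}}$ then prescribes the image of each element of the first basis to be the corresponding element of the second basis, and this assignment extends uniquely by linearity to a well-defined linear map between the two finite-dimensional vector spaces.

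Finally, a linear map that carries one basis bijectively onto another is automatically a vector space isomorphism, which completes the proof. There is no real obstacle here: the heavy lifting was carried out in Proposition \ref{proposition:isomorphismSkewTableaux}, and the present statement is essentially the bookkeeping required to transport the isomorphism $\Psi_{\underline{\theta}}$ through the natural vector-space identifications obtained by passing to the quotient on the left and dualizing on the right.
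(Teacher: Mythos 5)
Your argument and the paper's proof are essentially the same: the paper phrases $\overline{\Psi}_{\underline{\theta}}$ as a composition of three isomorphisms (quotient $\to \spann{\Std_{\underline{\theta}}}$, then $\Psi_{\underline{\theta}}$ from Proposition \ref{proposition:isomorphismSkewTableaux}, then the canonical map $\mathbb{W}_{\underline{\theta}} \to \mathbb{W}^{*}_{\underline{\theta}}$), which amounts precisely to your observation that the map carries the distinguished basis of the quotient bijectively onto the dual basis of $\mathbb{W}^{*}_{\underline{\theta}}$. Both arguments defer the substance to Proposition \ref{proposition:isomorphismSkewTableaux} and Remark \ref{remark:relationStdTheta}, so the proposal is correct and matches the paper's route.
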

\begin{proof}
That this map is an isomorphism follows from the fact that it is the composition of three isomorphisms. The first is from $\spann{\Std_{\underline{\theta}}^{\geq_{str}}} / \spann{\Std_{\underline{\theta}}^{>_{str}}}$ to $\spann{\Std_{\underline{\theta}}}$(cf. Remark \ref{remark:relationStdTheta}), the second is from $\spann{\Std_{\underline{\theta}}}$ to $W_{\underline{\theta}}$(cf. Proposition \ref{proposition:isomorphismSkewTableaux}), and the third is the canonical isomorphism from $W_{\underline{\theta}}$ to $W^{*}_{\underline{\theta}}$.
\end{proof}

Our goal now is to show that $\mathbb{W}^{*}_{\underline{\theta}}$ has a canonical $L$-module structure and then use the map $\overline{\Psi}_{\underline{\theta}}$ to relate its $L$-module structure to the $L$-module structure of $\spann{\Std_{\underline{\theta}}^{\geq_{str}}} / \spann{\Std_{\underline{\theta}}^{>_{str}}}$(cf. Corollary \ref{corollary:StdThetaLStable}).

\subsection{The \texorpdfstring{$\mathfrak{l}$}{l}-module structure of \texorpdfstring{$\mathbb{W}_{\underline{\theta}}$}{W} and the implications for our main theorem}
\label{subsec:lactionMainTheorem}

Recall that $L=\GL_{N_1}\times \cdots \times \GL_{N_{\dprl{L}}}$ and 

\begin{center}
$\mathbb{W}^{*}_{\underline{\theta}}=(\mathbb{W}^{\nicefrac{\lambda_{\underline{\theta}}^{(1)}}{\mu_{\underline{\theta}}^{(1)}}} \otimes \cdots \otimes \mathbb{W}^{\nicefrac{\lambda_{\underline{\theta}}^{(\dprl{L})}}{\mu_{\underline{\theta}}^{(\dprl{L})}}})^{*}\cong(\mathbb{W}^{\nicefrac{\lambda_{\underline{\theta}}^{(1)}}{\mu_{\underline{\theta}}^{(1)}}})^{*} \otimes \cdots \otimes (\mathbb{W}^{\nicefrac{\lambda_{\underline{\theta}}^{(\dprl{L})}}{\mu_{\underline{\theta}}^{(\dprl{L})}}})^{*}$.
\end{center}

Each $\mathbb{W}^{\nicefrac{\lambda_{\underline{\theta}}^{(k)}}{\mu_{\underline{\theta}}^{(k)}}}$ is a Weyl Module and thus has a canonical $\GL_{N_k}$-module structure. Thus $(\mathbb{W}^{\nicefrac{\lambda_{\underline{\theta}}^{(k)}}{\mu_{\underline{\theta}}^{(k)}}})^{*}$ has an induced $\GL_{N_k}$-module structure. The $L$-module structure for $\mathbb{W}^{*}_{\underline{\theta}}$ is simply given by the induced product structure. 

Let $T_L\subset T$ be the maximal torus in $L$. In Proposition \ref{proposition:isomorphismModuleSkewTableaux} we exhibited a vector space ismorphism from $\spann{\Std_{\underline{\theta}}^{\geq_{str}}} / \spann{\Std_{\underline{\theta}}^{>_{str}}}$ to $\mathbb{W}^{*}_{\underline{\theta}}$ that takes the $T_L$ weight vector $\overline{\pl_{\tau_1}\cdots \pl_{\tau_r}}$, $\pl_{\tau_1} \cdots \pl_{\tau_r} \in \Std_{\underline{\theta}}\;$, to the $T_L$ weight vector $(\wmodel{\tab_{\underline{\tau}}^{(1)}}{}\otimes \cdots \otimes \wmodel{\tab_{\underline{\tau}}^{(\dprl{L})}}{})^{*}$. We will use this to relate the characters of these two $L$-modules.

Since the map $\spann{\Std_{\underline{\theta}}^{\geq_{str}}}\rightarrow \spann{\Std_{\underline{\theta}}^{\geq_{str}}} / \spann{\Std_{\underline{\theta}}^{>_{str}}}$ is $L$-equivariant, and thus $T_L$-equivariant, we have that 
\begin{center}
$wt(\overline{\pl_{\tau_1}\cdots \pl_{\tau_r}})=wt(\pl_{\tau_1}\cdots \pl_{\tau_r})=wt(\pl_{\tau_1})+\cdots+wt(\pl_{\tau_r})$.
\end{center}

As discussed in Remark \ref{remark:weightPlucker} we have that the weight of $\pl_{\tau}$ is given by the sequence $\chi_{\tau}:=(\chi_1,\ldots,\chi_{N})$ where
\begin{center}
$\chi_i \coloneqq \left\{
\begin{array}{ll}
      -1 & i\in \tau\\
      0 & i\notin \tau 
\end{array} 
\right.$
\end{center}
for all $1\leq i \leq N$. Let $n_{\underline{\tau}}^{(i)}$ equal the number times the value $i$ appears in $\underline{\tau}$. Combining these results we have that $wt(\overline{\pl_{\tau_1}\cdots \pl_{\tau_r}})=(-n_{\underline{\tau}}^{(1)},\ldots,-n_{\underline{\tau}}^{(N)})$. And thus
\begin{equation}
\label{eq:charStdTheta}
char(\spann{\Std_{\underline{\theta}}^{\geq_{str}}} / \spann{\Std_{\underline{\theta}}^{>_{str}}})=\displaystyle \sum_{\substack{\underline{\tau}=(\tau_1,\ldots,\tau_r) \\ \textrm{ s.t. } \pl_{\tau_1} \cdots \pl_{\tau_r} \in \Std_{\underline{\theta}}}}e^{(-n_{\underline{\tau}}^{(1)},\ldots,-n_{\underline{\tau}}^{(N)})}\;(\textrm{cf. Section }\ref{subsec:glN})
\end{equation}

\noindent Regarding $char(\mathbb{W}^{*}_{\underline{\theta}})$ we have that
\begin{center}
$wt((\wmodel{\tab_{\underline{\tau}}^{(1)}}{}\otimes \cdots \otimes \wmodel{\tab_{\underline{\tau}}^{(\dprl{L})}}{})^{*})=-wt(\wmodel{\tab_{\underline{\tau}}^{(1)}}{}\otimes \cdots \otimes \wmodel{\tab_{\underline{\tau}}^{(\dprl{L})}}{})$
\end{center}
for all $\underline{\tau}=(\tau_1,\ldots,\tau_r) \textrm{ such that } \pl_{\tau_1} \cdots \pl_{\tau_r} \in \Std_{\underline{\theta}}$. Let $\xi_{\underline{\tau}}:=(\xi_1,\ldots,\xi_{N})$ be the weight of $T_L$ weight vector $\wmodel{\tab_{\underline{\tau}}^{(1)}}{}\otimes \cdots \otimes \wmodel{\tab_{\underline{\tau}}^{(\dprl{L})}}{}$. Then $\xi_i$ is equal to the number of entries in $\tab_{\underline{\tau}}^{(k)}$ equal to $i-a_{k-1}$ for the unique $k$ such that $i \in\left\{a_{k-1} + 1,\ldots, a_{k} \right\}$. But $n_{\underline{\tau}}^{(i)}$ is the number of entries in $\tab_{\underline{\tau}}^{(k)}$ equal to $i-a_{k-1}$. Thus we have that $wt((\wmodel{\tab_{\underline{\tau}}^{(1)}}{}\otimes \cdots \otimes \wmodel{\tab_{\underline{\tau}}^{(\dprl{L})}}{})^{*})=(-n_{\underline{\tau}}^{(1)},\ldots,-n_{\underline{\tau}}^{(N)})$ which implies
\begin{equation}
\label{eq:charWTheta}
char(\mathbb{W}^{*}_{\underline{\theta}}) = \displaystyle \sum_{\substack{\underline{\tau}=(\tau_1,\ldots,\tau_r) \\ \textrm{ s.t. } \pl_{\tau_1} \cdots \pl_{\tau_r} \in \Std_{\underline{\theta}}}}e^{(-n_{\underline{\tau}}^{(1)},\ldots,-n_{\underline{\tau}}^{(N)})}.
\end{equation}

\begin{proposition}
\label{proposition:psiLModuleIsomorphism}
The $L$-modules $\spann{\Std_{\underline{\theta}}^{\geq_{str}}} / \spann{\Std_{\underline{\theta}}^{>_{str}}}$ and $\mathbb{W}^{*}_{\underline{\theta}}$ are isomorphic.
\end{proposition}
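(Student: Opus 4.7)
The plan is to combine the work already done in Section 3.3 and Section 3.4: the vector space isomorphism $\overline{\Psi}_{\underline{\theta}}$ of Proposition \ref{proposition:isomorphismModuleSkewTableaux} is not manifestly $L$-equivariant, so rather than trying to show equivariance directly, I will invoke the character criterion of Corollary \ref{corollary:ratCharIso} (extended to the reductive group $L$) together with the character computations in equations \eqref{eq:charStdTheta} and \eqref{eq:charWTheta}.

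First I would verify the hypotheses. The quotient $\spann{\Std_{\underline{\theta}}^{\geq_{str}}}/\spann{\Std_{\underline{\theta}}^{>_{str}}}$ inherits an $L$-module structure from Corollary \ref{corollary:StdThetaLStable}, which tells us that both numerator and denominator are $L$-stable subspaces of $\mathbb{C}[X(w)]_r$. This is a rational representation since $\mathbb{C}[X(w)]_r$ itself is a rational $L$-module. On the other side, each skew Weyl module $\mathbb{W}^{\nicefrac{\lambda_{\underline{\theta}}^{(k)}}{\mu_{\underline{\theta}}^{(k)}}}(V_k)$ is a polynomial $\GL_{N_k}$-module, hence rational, so its dual is rational; the external tensor product $\mathbb{W}^{*}_{\underline{\theta}}$ is therefore a finite dimensional rational $L$-module for $L=\GL_{N_1}\times\cdots\times\GL_{N_{\dprl{L}}}$.

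Next I would invoke the product version of Corollary \ref{corollary:ratCharIso}: two finite dimensional rational $L$-modules are isomorphic if and only if they have the same character (in the sense of Remark \ref{remark:dominantWeightProduct}). This follows from Corollary \ref{corollary:ratCharIso} applied componentwise, together with the standard fact that irreducible rational representations of a product of reductive groups are external tensor products of irreducibles of the factors. The required character equality is immediate from \eqref{eq:charStdTheta} and \eqref{eq:charWTheta}, which both produce the same sum $\sum e^{(-n_{\underline{\tau}}^{(1)},\ldots,-n_{\underline{\tau}}^{(N)})}$ indexed by the same set of sequences $\underline{\tau}$ with $\pl_{\tau_1}\cdots\pl_{\tau_r}\in\Std_{\underline{\theta}}$. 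Combining these observations gives the desired $L$-module isomorphism.

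The only point requiring any care is the character computation for $\mathbb{W}^{*}_{\underline{\theta}}$: one must track that the weight of $\wmodel{\tab_{\underline{\tau}}^{(k)}}{}$ under the action of $T_L\cap \GL_{N_k}$ really is given by recording how many times each integer $1,\ldots,N_k$ appears in $\tab_{\underline{\tau}}^{(k)}$, and that after shifting indices by $a_{k-1}$ and assembling the $\dprl{L}$ blocks together one recovers exactly the sequence $(n_{\underline{\tau}}^{(1)},\ldots,n_{\underline{\tau}}^{(N)})$. This is essentially a bookkeeping exercise, which the excerpt has already carried out in the paragraphs preceding \eqref{eq:charWTheta}, so no real obstacle remains. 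Thus the proof reduces to the two-line argument: both modules are rational, they have identical characters, and so by the character criterion they are isomorphic as $L$-modules.
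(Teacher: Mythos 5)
Your proposal follows essentially the same route as the paper's own proof: verify that both sides are rational $L$-modules, compute that their characters agree via \eqref{eq:charStdTheta} and \eqref{eq:charWTheta}, and invoke the character criterion of Corollary \ref{corollary:ratCharIso} (extended to the product group $L$). The extra care you take in spelling out the product version of the character criterion and the bookkeeping behind the weight of $\wmodel{\tab_{\underline{\tau}}^{(k)}}{}$ is sound but matches what the paper already establishes in the paragraphs preceding \eqref{eq:charWTheta}.
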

\begin{proof}
We have that $char(\spann{\Std_{\underline{\theta}}^{\geq_{str}}} / \spann{\Std_{\underline{\theta}}^{>_{str}}}) = char(\mathbb{W}^{*}_{\underline{\theta}})$ by \eqref{eq:charStdTheta} and \eqref{eq:charWTheta}. Since $\mathbb{C}[X(w)]$ is a quotient of the rational $\GL_N$ representation $\mathbb{C}[\mathbb{P}(\bigwedge^d \mathbb{C}^N)]$ by an $L$-stable ideal, it is a rational $L$-representation, and thus any $L$-subrepresentation is a rational $L$-representation. And the quotient of two rational representations, $\spann{\Std_{\underline{\theta}}^{\geq_{str}}} / \spann{\Std_{\underline{\theta}}^{>_{str}}}$ is a rational $L$-representation. Since $\mathbb{W}^{*}_{\underline{\theta}}$ is the dual of the tensor product of polynomial representations, it is a rational $L$-representation. It follows from Corollary \ref{corollary:ratCharIso} that two rational $L$-representations are isomorphic if and only if their characters are equal, and hence we have our desired result.
\end{proof}

\begin{theorem}
\label{theorem:mainDecompositionTheorem}
Let $\underline{\theta} \in \He_{L,r}^{std}$. There exists a $L$-submodule $U_{\underline{\theta}} \subseteq \spann{\Std_r}$ such that we have the following $L$-module isomorphisms:
\begin{enumerate}
\item $\spann{\Std_{\underline{\theta}}^{\geq_{str}}} = U_{\underline{\theta}} \oplus \spann{\Std_{\underline{\theta}}^{>_{str}}}$.
\item $\displaystyle \spann{\Std_r} = \bigoplus_{\underline{\theta} \in \He_{L,r}^{std}} U_{\underline{\theta}}$.
\item $U_{\underline{\theta}} \cong \mathbb{W}^{*}_{\underline{\theta}}$
\end{enumerate}
\end{theorem}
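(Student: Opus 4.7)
The plan is to establish the three claims in the order (a), (c), (b). For (a), I would invoke complete reducibility: by Corollary \ref{corollary:StdThetaLStable} both $\spann{\Std_{\underline{\theta}}^{\geq_{str}}}$ and $\spann{\Std_{\underline{\theta}}^{>_{str}}}$ are $L$-submodules of $\mathbb{C}[X(w)]_r$, and as $\mathbb{C}[X(w)]$ is a rational $\GL_N$-representation, these are rational $L$-representations. Since $L = \GL_{N_1}\times \cdots \times \GL_{N_{\dprl{L}}}$ is reductive, every rational $L$-representation is completely reducible, so an $L$-stable complement $U_{\underline{\theta}}$ of $\spann{\Std_{\underline{\theta}}^{>_{str}}}$ inside $\spann{\Std_{\underline{\theta}}^{\geq_{str}}}$ exists; this gives (a).

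Claim (c) is immediate from (a): the direct sum in (a) provides an $L$-module isomorphism $U_{\underline{\theta}} \cong \spann{\Std_{\underline{\theta}}^{\geq_{str}}}/\spann{\Std_{\underline{\theta}}^{>_{str}}}$, and by Proposition \ref{proposition:psiLModuleIsomorphism} the quotient is isomorphic to $\mathbb{W}^{*}_{\underline{\theta}}$.

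For (b), I would proceed by downward induction on $\geq_{str}$ and finish with a dimension count. The claim to prove by induction is: for every $\underline{\theta}\in\He_{L,r}^{std}$, $\spann{\Std_{\underline{\theta}}^{\geq_{str}}} \subseteq \sum_{\underline{\theta}' \geq_{str} \underline{\theta}} U_{\underline{\theta}'}$. For $\underline{\theta}$ maximal, $\spann{\Std_{\underline{\theta}}^{>_{str}}} = 0$ and the claim reduces to $\spann{\Std_{\underline{\theta}}^{\geq_{str}}} = U_{\underline{\theta}}$. For the inductive step, given $x \in \spann{\Std_{\underline{\theta}}^{\geq_{str}}}$, apply (a) to write $x = u + v$ with $u \in U_{\underline{\theta}}$ and $v \in \spann{\Std_{\underline{\theta}}^{>_{str}}}$. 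The partition $\Std_{\underline{\theta}}^{>_{str}} = \bigsqcup_{\underline{\theta}'>_{str}\underline{\theta}} \Std_{\underline{\theta}'}$ together with Corollary \ref{corollary:decompositionStdr} lets us write $v = \sum v_{\underline{\theta}'}$ with $v_{\underline{\theta}'} \in \spann{\Std_{\underline{\theta}'}} \subseteq \spann{\Std_{\underline{\theta}'}^{\geq_{str}}}$; the induction hypothesis places each $v_{\underline{\theta}'}$ in $\sum_{\underline{\theta}'' \geq_{str} \underline{\theta}'} U_{\underline{\theta}''} \subseteq \sum_{\underline{\theta}'' >_{str} \underline{\theta}} U_{\underline{\theta}''}$, so that $x \in \sum_{\underline{\theta}'\geq_{str}\underline{\theta}} U_{\underline{\theta}'}$. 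Since every degree $r$ standard monomial lies in $\spann{\Std_{\underline{\theta}_0}^{\geq_{str}}}$ for $\underline{\theta}_0$ equal to its own head, the claim at each $\underline{\theta}$ assembles into $\spann{\Std_r} = \sum_{\underline{\theta}} U_{\underline{\theta}}$.

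To upgrade this sum to a direct sum, compare dimensions: from (a) and Remark \ref{remark:relationStdTheta}, $\dim U_{\underline{\theta}} = \dim\spann{\Std_{\underline{\theta}}^{\geq_{str}}} - \dim\spann{\Std_{\underline{\theta}}^{>_{str}}} = |\Std_{\underline{\theta}}|$, and summing over $\underline{\theta}$ using Corollary \ref{corollary:decompositionStdr} yields $\sum_{\underline{\theta}} \dim U_{\underline{\theta}} = |\Std_r| = \dim\spann{\Std_r}$, which forces the sum on the right of $\spann{\Std_r} = \sum_{\underline{\theta}} U_{\underline{\theta}}$ to be direct. The main subtlety I anticipate is that the complements $U_{\underline{\theta}}$ are chosen independently for different $\underline{\theta}$ with no a priori coherence across the $\geq_{str}$-filtration; the induction-plus-dimension-count strategy circumvents this by never demanding compatibility between the individual splittings.
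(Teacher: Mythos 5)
Your handling of (a) and (c) matches the paper's exactly (complete reducibility of $L$-modules to produce the complement $U_{\underline{\theta}}$, then Proposition \ref{proposition:psiLModuleIsomorphism} for the identification with $\mathbb{W}^{*}_{\underline{\theta}}$). For (b) you take a genuinely different route. The paper proves $U_{\underline{\theta}_1}\cap U_{\underline{\theta}_2}=\{0\}$ for each pair $\underline{\theta}_1\neq\underline{\theta}_2$ via a two-case analysis (comparable versus incomparable in $\geq_{str}$, using linear independence of standard monomials), then asserts that the pairwise trivial intersections make $\sum U_{\underline{\theta}}$ a direct sum, and finally invokes the dimension count $\sum\dim U_{\underline{\theta}} = |\Std_r|$ to conclude equality with $\spann{\Std_r}$. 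You instead prove spanning directly by downward induction on $\geq_{str}$ — namely $\spann{\Std_{\underline{\theta}}^{\geq_{str}}}\subseteq\sum_{\underline{\theta}'\geq_{str}\underline{\theta}}U_{\underline{\theta}'}$ — and then use the same dimension count to upgrade the sum to a direct sum. Your route has a real advantage: as stated, the paper's implication ``pairwise trivial intersection $\Rightarrow$ direct sum'' is not a valid inference in general (three distinct lines through the origin in a plane meet pairwise trivially), and to make it rigorous one must further exploit the $\geq_{str}$-triangularity, e.g., by choosing a $\geq_{str}$-minimal index in a putative nontrivial relation $\sum u_i = 0$ and observing that only $u_k$ can contribute $\Std_{\underline{\theta}_k}$-components. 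Your induction-plus-dimension argument avoids this subtlety entirely and never needs to compare complements chosen independently at different levels of the filtration, so it is complete as written.
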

\begin{proof}
\noindent(a) We are in characteristic 0; so $L$ being reductive we have that $L$ is linearly reductive. Thus any $L$-module is completely reducible. This implies $\spann{\Std_{\underline{\theta}}^{\geq_{str}}}$ is completely reducible, and since $\spann{\Std_{\underline{\theta}}^{>_{str}}}$ is a $L$-submodule of $\spann{\Std_{\underline{\theta}}^{\geq_{str}}}$ it must have a $L$-module complement which we denote $U_{\underline{\theta}}$. Thus $\spann{\Std_{\underline{\theta}}^{\geq_{str}}} = U_{\underline{\theta}} \oplus \spann{\Std_{\underline{\theta}}^{>_{str}}}$ as $L$-modules.

\noindent(b) Note that the $L$-module complement  of  $\spann{\Std_{\underline{\theta}}^{>_{str}}}$ in $\spann{\Std_{\underline{\theta}}^{\geq_{str}}}$ is not unique, and so we must make a choice of a particular complement which we denote $U_{\underline{\theta}}$. Nonetheless, all arguments in this proof work regardless of what choice is made.  We have the following vector space isomorphisms 
\begin{equation}
\label{equation:maindecomp1}
\begin{array}{rll}
U_{\underline{\theta}} & \cong\;\; \spann{\Std_{\underline{\theta}}^{\geq_{str}}} / \spann{\Std_{\underline{\theta}}^{>_{str}}} & \textrm{ (by (a))} \\
\; & \cong\;\;\spann{Std_{\underline{\theta}}} & \textrm{ (Remark \ref{remark:relationStdTheta})}\\
\end{array}
\end{equation}
And by Corollary \ref{corollary:decompositionStdr} we have that
\begin{equation}
\label{equation:maindecomp2}
\displaystyle \spann{\Std_r}\;\;=\bigoplus_{\underline{\theta} \in \He_{L,r}^{std}} \spann{\Std_{\underline{\theta}}}
\end{equation}
as vector spaces. 

Now consider $U_{\underline{\theta}_1}$ and $U_{\underline{\theta}_2}$ for $\underline{\theta}_1$, $\underline{\theta}_2\in \He_{L,r}^{std}$ with $\underline{\theta}_1 \neq \underline{\theta}_2$. Then we claim that $U_{\underline{\theta}_1} \bigcap U_{\underline{\theta}_2} = \{0\}$. To see why this is the case we have two possibilities to consider.

\noindent\textbf{Case 1:} $\underline{\theta}_1$ and $\underline{\theta}_2$ comparable in the partial order $\geq_{str}$. Then without loss of generality say $\underline{\theta}_1 <_{str} \underline{\theta}_2$. Then  $\spann{\Std_{\underline{\theta}_2}^{\geq_{str}}} \subseteq \spann{\Std_{\underline{\theta}_1}^{>_{str}}}$. But then since $U_{\underline{\theta}_1}$ is an $L$-module complement of $\spann{\Std_{\underline{\theta}_1}^{>_{str}}}$, and $U_{\underline{\theta}_2}\subset \spann{\Std_{\underline{\theta}_2}^{\geq_{str}}}$, this implies $U_{\underline{\theta}_1} \bigcap U_{\underline{\theta}_2} = \{0\}$.

\noindent\textbf{Case 2:} $\underline{\theta}_1$ and $\underline{\theta}_2$ non-comparable in the partial order $\geq_{str}$. This implies
\begin{equation}
\label{equation:maintheorem3}
\Std_{\underline{\theta}_2}^{\geq_{str}}\bigcap\Std_{\underline{\theta}_1}=\emptyset.
\end{equation}

Let $f \in U_{\underline{\theta}_1}$, then since $U_{\underline{\theta}_1} \subset \spann{\Std_{\underline{\theta}_1}^{\geq_{str}}}$ we have $f = \sum A_i f_i$ for some $A_i \in \mathbb{C}$ and $f_i \in \Std_{\underline{\theta}_1}^{\geq_{str}}$. Note that at least one of these $f_i$ is in $Std_{\underline{\theta}_1}$ and appears with nonzero $A_i$, otherwise $f \in \spann{\Std_{\underline{\theta}_1}^{>_{str}}}$, which contradicts the definition of $U_{\underline{\theta}_1}$. So we can rewrite $f=\sum B_j g_j + \sum C_k h_k$ for some $B_j,C_k \in \mathbb{C}$, $g_j \in \Std_{\underline{\theta}_1}$ and $h_k \in \Std_{\underline{\theta}_1}^{>_{str}}$ with not all $B_j$ equal to zero.

Now suppose $f \in U_{\underline{\theta}_2}$. This implies $f = \sum D_i x_i$ for some $D_i \in \mathbb{C}$ and $x_i \in \Std_{\underline{\theta}_2}^{\geq_{str}}$. Combining the two different expressions for $f$ gives 
\begin{equation}
\label{equation:maintheorem4}
\sum B_j g_j = \sum D_i x_i - \sum C_k h_k. 
\end{equation}
We have the $g_j \in \Std_{\underline{\theta}_1}$, $x_i \in \Std_{\underline{\theta}_2}^{\geq_{str}}$, and $h_k \in \Std_{\underline{\theta}_1}^{>_{str}}$. Now by \eqref{equation:maintheorem3} and the fact that
\begin{center}
$\Std_{\underline{\theta}_1}\bigcap\Std_{\underline{\theta}_1}^{>_{str}}=\emptyset$.
\end{center}
this means that in \eqref{equation:maintheorem4} we are writing a linear combination of standard monomials in $\Std_{\underline{\theta}_1}$ as a linear combination of standard monomials not in $\Std_{\underline{\theta}_1}$, hence both sides of \eqref{equation:maintheorem4} should equal zero. Since not all the $B_j$ are zero this is not the case, and thus is a violation of the linear independence of the standard monomials. Thus $f \notin U_{\underline{\theta}_2}$ and we have $U_{\underline{\theta}_1} \bigcap U_{\underline{\theta}_2} = \{0\}$.

Thus the subspace of $\spann{\Std_r}$ that is defined as the sum of all the $U_{\underline{\theta}}$, $\underline{\theta} \in \He_{L,r}^{std}$, is a direct sum, that is
\begin{center}
$\displaystyle \sum_{\underline{\theta} \in \He_{L,r}^{std}} U_{\underline{\theta}}=\displaystyle \bigoplus_{\underline{\theta} \in \He_{L,r}^{std}} U_{\underline{\theta}}$.
\end{center}
But \eqref{equation:maindecomp1} and \eqref{equation:maindecomp2} imply that this subspace is in fact equal to $\spann{\Std_r}$ by dimension considerations. Thus we have 
\begin{center}
$\displaystyle \spann{\Std_r}\;\;=\bigoplus_{\underline{\theta} \in \He_{L,r}^{std}} U_{\underline{\theta}}$
\end{center}
as vector spaces. Now since each $U_{\underline{\theta}}$ is $L$-stable this is in fact an equality of $L$-modules.

\noindent(c) We have that $U_{\underline{\theta}} \cong \spann{\Std_{\underline{\theta}}^{\geq_{str}}} / \spann{\Std_{\underline{\theta}}^{>_{str}}}$ as $L$-modules. And by Proposition \ref{proposition:psiLModuleIsomorphism} we have $\spann{\Std_{\underline{\theta}}^{\geq_{str}}} / \spann{\Std_{\underline{\theta}}^{>_{str}}}$ is isomorphic to $\mathbb{W}^{*}_{\underline{\theta}}$ as $L$-modules.
\end{proof}

\begin{remark}
\label{remark:decompDegree1}
When $\theta \in \He_{L,1}$ $(=\He_L)$ we have that $\spann{\Std_{\theta}}$ is $L$-stable. This can be seen by noting that it will be $\mathfrak{l}$-stable, which follows immediately from the description of the $\mathfrak{l}$-action in Section \ref{subsec:lactionMainTheorem} and Proposition \ref{proposition:PartitionofHasse}. This implies that $U_{\theta} \cong \spann{\Std_{\theta}}$ as $L$-modules.

Further, when $\theta \in \He_{L,1}$ we have that 
\begin{center}
$\mathbb{W}_{\theta} = \mathbb{W}^{(1^{m_1})}\otimes \cdots \otimes \mathbb{W}^{(1^{m_{\dprl{L}}})}$.
\end{center}
for some non-negative integers $m_1,\ldots,m_{\dprl{L}}$(cf. Section \ref{subsec:SkewTableauxofStdMon}).

Thus for such $\theta$ we have that $U_{\theta} \cong \spann{\Std_{\theta}}$ is an irreducible $L$-module.
\end{remark}

\begin{corollary}
\label{corollary:mainDecompositionTheoremIrred}
The ring $\mathbb{C}[X(w)]$ has the following decomposition into irreducible $L$-modules 
\begin{center}
$\displaystyle \bigoplus_{r \geq 1} \bigoplus_{\underline{\theta} \in \He_{L,r}^{std}} \left(\left(\displaystyle \bigoplus_{\nu_{\underline{\theta}}^{(1)}} \left(\left({\mathbb{W}^{\nu_{\underline{\theta}}^{(1)}}}\right)^{*}\right)^{\oplus c_{\mu_{\underline{\theta}}^{(1)},\; \nu_{\underline{\theta}}^{(1)}}^{\lambda_{\underline{\theta}}^{(1)}}}\right) \otimes \cdots \otimes \left(\displaystyle \bigoplus_{\nu_{\underline{\theta}}^{(\dprl{L})}} \left(\left({\mathbb{W}^{\nu_{\underline{\theta}}^{(\dprl{L})}}}\right)^{*}\right)^{\oplus c_{\mu_{\underline{\theta}}^{(\dprl{L})},\; \nu_{\underline{\theta}}^{(\dprl{L})}}^{\lambda_{\underline{\theta}}^{(\dprl{L})}}}\right)\right)$ 
\end{center}
where for all $1\leq k \leq \dprl{L}$: the $\lambda_{\underline{\theta}}^{(k)}$, $\mu_{\underline{\theta}}^{(k)}$ are the partitions defined in Definition \ref{definition:skewsubtabpartitions}, the innermost direct sums are over all partitions $\nu_{\underline{\theta}}^{(k)}$ such that $|\nu_{\underline{\theta}}^{(k)}|=|\lambda_{\underline{\theta}}^{(k)}|-|\mu_{\underline{\theta}}^{(k)}|$, and the $c_{\mu_{\underline{\theta}}^{(k)},\; \nu_{\underline{\theta}}^{(k)}}^{\lambda_{\underline{\theta}}^{(k)}}$ are the Littlewood-Richardson coefficients associated to these partitions.
\end{corollary}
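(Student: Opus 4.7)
The plan is to combine Theorem \ref{theorem:mainDecompositionTheorem} with the Littlewood-Richardson decomposition of skew Weyl modules given in \eqref{equation:glnweylmoduledecomp}, then dualize and distribute across the tensor factors. As a graded ring $\mathbb{C}[X(w)] = \bigoplus_{r \geq 0} \mathbb{C}[X(w)]_r$, and by Theorem \ref{theorem:stdMonTheoryFundamental} together with Theorem \ref{theorem:mainDecompositionTheorem}(b), (c) we have the $L$-module isomorphism
\begin{equation*}
\mathbb{C}[X(w)]_r \;=\; \spann{\Std_r} \;=\; \bigoplus_{\underline{\theta} \in \He_{L,r}^{std}} U_{\underline{\theta}} \;\cong\; \bigoplus_{\underline{\theta} \in \He_{L,r}^{std}} \mathbb{W}^{*}_{\underline{\theta}}.
\end{equation*}
So the problem reduces to decomposing each $\mathbb{W}^{*}_{\underline{\theta}}$ into irreducibles for $L = GL_{N_1} \times \cdots \times GL_{N_{\dprl{L}}}$.

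Next I would unpack the definition of $\mathbb{W}_{\underline{\theta}}$ from Definition \ref{definition:isomorphismSkewTableaux} as the external tensor product $\mathbb{W}^{\nicefrac{\lambda_{\underline{\theta}}^{(1)}}{\mu_{\underline{\theta}}^{(1)}}}(V_1) \otimes \cdots \otimes \mathbb{W}^{\nicefrac{\lambda_{\underline{\theta}}^{(\dprl{L})}}{\mu_{\underline{\theta}}^{(\dprl{L})}}}(V_{\dprl{L}})$. Taking duals commutes with the external tensor product (for finite-dimensional representations of a product of reductive groups), so
\begin{equation*}
\mathbb{W}^{*}_{\underline{\theta}} \;\cong\; \bigl(\mathbb{W}^{\nicefrac{\lambda_{\underline{\theta}}^{(1)}}{\mu_{\underline{\theta}}^{(1)}}}\bigr)^{*} \otimes \cdots \otimes \bigl(\mathbb{W}^{\nicefrac{\lambda_{\underline{\theta}}^{(\dprl{L})}}{\mu_{\underline{\theta}}^{(\dprl{L})}}}\bigr)^{*}.
\end{equation*}
Now for each factor, \eqref{equation:glnweylmoduledecomp} gives the $GL_{N_k}$-module decomposition
\begin{equation*}
\mathbb{W}^{\nicefrac{\lambda_{\underline{\theta}}^{(k)}}{\mu_{\underline{\theta}}^{(k)}}} \;\cong\; \bigoplus_{\nu_{\underline{\theta}}^{(k)}} \bigl(\mathbb{W}^{\nu_{\underline{\theta}}^{(k)}}\bigr)^{\oplus c_{\mu_{\underline{\theta}}^{(k)},\, \nu_{\underline{\theta}}^{(k)}}^{\lambda_{\underline{\theta}}^{(k)}}},
\end{equation*}
where the sum is over partitions $\nu_{\underline{\theta}}^{(k)}$ with $|\nu_{\underline{\theta}}^{(k)}| = |\lambda_{\underline{\theta}}^{(k)}| - |\mu_{\underline{\theta}}^{(k)}|$. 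Dualizing preserves direct sums and multiplicities, and the dual of the (irreducible polynomial) Weyl module $\mathbb{W}^{\nu_{\underline{\theta}}^{(k)}}$ remains irreducible as a rational $GL_{N_k}$-representation, so each tensor factor becomes $\bigoplus_{\nu_{\underline{\theta}}^{(k)}} \bigl((\mathbb{W}^{\nu_{\underline{\theta}}^{(k)}})^{*}\bigr)^{\oplus c_{\mu_{\underline{\theta}}^{(k)},\, \nu_{\underline{\theta}}^{(k)}}^{\lambda_{\underline{\theta}}^{(k)}}}$.

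Finally, I would distribute the tensor product over the direct sums in each factor; this yields a direct sum indexed by tuples $(\nu_{\underline{\theta}}^{(1)}, \ldots, \nu_{\underline{\theta}}^{(\dprl{L})})$ of tensor products of dual Weyl modules, with multiplicities given by the product of the individual Littlewood-Richardson coefficients. An external tensor product of irreducible rational representations of the factors $GL_{N_k}$ is an irreducible rational $L$-representation (by Remark \ref{remark:formofL} and the standard fact about representations of direct products of reductive groups), and these exhaust the irreducibles that appear. Summing over all $r \geq 1$ and all $\underline{\theta} \in \He_{L,r}^{std}$ produces the stated formula. No step is really an obstacle: the content is packaged into Theorem \ref{theorem:mainDecompositionTheorem}, and this corollary is a bookkeeping exercise applying the Littlewood-Richardson rule factor by factor and then dualizing, using only that duality and external tensor products commute and that they send (polynomial) irreducibles to rational irreducibles.
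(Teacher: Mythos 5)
Your proposal is correct and follows essentially the same route as the paper: apply Theorem \ref{theorem:mainDecompositionTheorem} to reduce to decomposing each $\mathbb{W}^{*}_{\underline{\theta}}$, use \eqref{equation:glnweylmoduledecomp} on each external tensor factor, and dualize. You spell out a few routine verifications (duals commute with external tensor products, and external tensor products of irreducibles for the $GL_{N_k}$ are irreducible for $L$) that the paper leaves implicit, but the underlying argument is the same.
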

\begin{proof}
The decomposition of $\mathbb{W}_{\underline{\theta}}$ into irreducible $L$-modules may be obtained by using \ref{equation:glnweylmoduledecomp};
\begin{center}
$\begin{array}{rl}
\mathbb{W}_{\underline{\theta}} & =\mathbb{W}^{\nicefrac{\lambda_{\underline{\theta}}^{(1)}}{\mu_{\underline{\theta}}^{(1)}}} \otimes \cdots \otimes \mathbb{W}^{\nicefrac{\lambda_{\underline{\theta}}^{(\dprl{L})}}{\mu_{\underline{\theta}}^{(\dprl{L})}}} \\
\; & = \left(\displaystyle \bigoplus_{\nu_{\underline{\theta}}^{(1)}} \left({\mathbb{W}^{\nu_{\underline{\theta}}^{(1)}}}\right)^{\oplus c_{\mu_{\underline{\theta}}^{(1)},\; \nu_{\underline{\theta}}^{(1)}}^{\lambda_{\underline{\theta}}^{(1)}}}\right) \otimes \cdots \otimes \left(\displaystyle \bigoplus_{\nu_{\underline{\theta}}^{(\dprl{L})}} \left({\mathbb{W}^{\nu_{\underline{\theta}}^{(\dprl{L})}}}\right)^{\oplus c_{\mu_{\underline{\theta}}^{(\dprl{L})},\; \nu_{\underline{\theta}}^{(\dprl{L})}}^{\lambda_{\underline{\theta}}^{(\dprl{L})}}}\right).
\end{array}$
\end{center}
The corollary follows by taking the dual of $\mathbb{W}_{\underline{\theta}}$ and Theorem \ref{theorem:mainDecompositionTheorem}.

\end{proof}

\begin{remark}
\label{remark:howeBranching}
Let $L_w$ be the Levi subgroup of the stabilizer $Q_w$. If $\dprl{L_w}=1$ we have that $L_w=\GL_N$ and $\mathbb{C}[X(w)]_r \cong \left(\mathbb{W}^{(r^{d})}\right)^{*}$. Further, Theorem \ref{theorem:mainDecompositionTheorem} and Corollary \ref{corollary:mainDecompositionTheoremIrred} give the decomposition of $\mathbb{C}[X(w)]_r$ for any $L$ the Levi part of a parabolic subgroup $Q \subset Q_w$. This $L$ is a subgroup of $L_w$, in fact $L=\GL_{N_1} \times \cdots \times \GL_{N_{\dprl{L}}}$ is embedded diagonally in $L_w=\GL_N$. Further $X(w)=\Grass_{d,N}$. Thus as a consequence of our explicit decomposition we get the branching rules for the Weyl module $\left(\mathbb{W}^{(r^{d})}\right)^{*}$ for any $\GL_{N_1} \times \cdots \times \GL_{N_{\dprl{L}}}$ with $N_1 + \cdots + N_{\dprl{L}}=N$ diagonally embedded in $\GL_N$. This branching rule is discussed in much greater generality in \cite{MR2115378} for $\GL_m \times \GL_n$ embedded diagonally in $\GL_{n+m}$. It seems reasonable to expect that further exploration of the cases when $\dprl{L_w}>1$ might yield additional non-trivial branching rules for representations of $\GL_N$.
\end{remark}

\section{Sphericity consequences of the decomposition}
\label{sec:sphericityconsequences}

The decomposition results from the previous section may be used to show that certain classes of Schubert varieties are in fact spherical varieties. For this section fix $d, N$ positive integers with $d<N$. Let $P=P_{d}$ and $w=(\ell_1,\ldots,\ell_{d})\in W^{P}$. Let $L_w$ be the Levi part of the stabilizer $Q_w$ of $X(w)$.

Let $G$ be a connected reductive group with $B_G$ a Borel subgroup. Suppose that $X$ is an irreducible $G$-variety. Then $X$ is a \emph{spherical $G$-variety} if it is normal and it has an open dense $B_G$-orbit(cf. \cite{MR837530}). We wish to relate the sphericity of a projective variety $X\hookrightarrow \mathbb{P}(V)$ and the cone $\widehat{X}$ over $X$.

\begin{proposition}
\label{proposition:coneProjVarMultFreeSphericity}
Let $X$ be projectively normal, namely $\mathbb{C}[X]$, the homogeneous coordinate ring of $X$, is normal. Let $G$ be a connected reductive group acting linearly on $X$, that is $V$ is a $G$-module and the action of $G$ on $X$ is induced from the $G$-action on $\mathbb{P}(V)$. If $\mathbb{C}[X]$ has a multiplicity free decomposition into irreducible $G$-modules, then $\widehat{X},X$ are spherical $G$-varieties.
\end{proposition}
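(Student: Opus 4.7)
The plan is to first establish sphericity of the affine cone $\widehat{X}$ and then push the open dense $B_G$-orbit down to $X$ through the quotient $\widehat{X}\setminus\{0\}\to X$.

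For normality: the hypothesis that $\mathbb{C}[X]=\mathbb{C}[\widehat{X}]$ is normal directly says $\widehat{X}$ is a normal affine variety. Normality of $X$ itself is a standard consequence of projective normality, since each local ring $\mathcal{O}_{X,x}$ may be recovered as a graded localization of $\mathbb{C}[\widehat{X}]$ at a relevant homogeneous prime, and normality is preserved under such localization.

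For sphericity of $\widehat{X}$: I would invoke the Vinberg--Kimelfeld criterion, which states that a normal affine $G$-variety $Y$ (with $G$ connected reductive) is spherical if and only if $\mathbb{C}[Y]$ is multiplicity-free as a $G$-module. Applied to $Y=\widehat{X}$ carrying the graded $G$-action inherited from $V$, the multiplicity-free hypothesis on $\mathbb{C}[\widehat{X}]$ immediately produces an open dense $B_G$-orbit $O\subseteq\widehat{X}$, completing sphericity of the cone. If one prefers to avoid citing Vinberg--Kimelfeld as a black box, one can argue directly: multiplicity-freeness implies that each isotypic component $V_\lambda\subseteq\mathbb{C}[\widehat{X}]$ contains a unique (up to scalar) $B_G$-semi-invariant, and the resulting semi-invariants generate a subfield of $\mathbb{C}(\widehat{X})^{B_G}$ of transcendence degree equal to the rank of the $B_G$-weight monoid; a dimension count then forces $B_G$ to have an open orbit.

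For sphericity of $X$: consider the geometric quotient $\pi\colon\widehat{X}\setminus\{0\}\to X$ by the scalar $\mathbb{C}^{\ast}$-action. Since the $G$- and $\mathbb{C}^{\ast}$-actions on $V$ commute, $\pi$ is $G$-equivariant, and as a geometric quotient it is open and surjective. The set $\{0\}$ is a proper closed $G$-stable subset of $\widehat{X}$, so the open dense $B_G$-orbit $O$ from the previous step is contained in $\widehat{X}\setminus\{0\}$. Because $G$-equivariant morphisms carry a single orbit onto a single orbit, $\pi(O)=B_G\cdot\pi(x)$ for any $x\in O$, so $\pi(O)$ is a single $B_G$-orbit in $X$. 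Openness of $\pi$ makes $\pi(O)$ open in $X$, and density of $O$ in $\widehat{X}$ makes $\pi(O)$ dense in $X$. Together with the already-established normality of $X$, this shows $X$ is a spherical $G$-variety.

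The essential content is the Vinberg--Kimelfeld criterion; every other step is formal bookkeeping about the $\mathbb{C}^{\ast}$-quotient and preservation of normality under coning/deconing. Consequently there is no substantial obstacle once that criterion is in hand, and the proof will be quite short.
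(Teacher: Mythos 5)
Your proposal is correct and follows essentially the same route as the paper: invoke the Vinberg--Kimelfeld criterion (the paper cites Timashev's Theorem 25.1, which is the same statement) to get sphericity of the normal affine cone $\widehat{X}$, then push the open dense $B_G$-orbit down through the open $G$-equivariant quotient $\widehat{X}\setminus\{0\}\to X$. The only additions you make — spelling out normality of $X$ and sketching a proof of the Vinberg--Kimelfeld criterion — are refinements of the same argument, not a different approach.
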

\begin{proof}
We have, by \cite[Theorem 25.1]{MR2797018}, that an affine normal G-variety (G reductive) is a spherical G-variety if and only if the decomposition of its coordinate ring into irreducible G-modules is multiplicity free.  Thus $\widehat{X}$ is a spherical $G$-variety and hence has a dense open $B$-orbit $U$. 

The canonical map $\pi: V \setminus \{0\} \rightarrow \mathbb{P}(V)$ is $G$-equivariant. Denoting the restriction of $\pi$ to $\widehat{X} \setminus \{0\}$ by $\pi'$, we get a $G$-equivariant map $\pi': \widehat{X} \setminus \{0\} \rightarrow X$. The map $\pi'$ is a principal fiber bundle for the action of the multiplicative group $\mathbb{G}_m$, and therefore a geometric quotient. Hence $\pi'$ is an open map and we get that $U':=\pi'(U)$ is open and dense in $X$. As $U'$ is the image of a $B$-orbit under a $G$-equivariant map we have that $U'$ is itself a $B$-orbit. Thus $X$ is a spherical $G$-variety.
\end{proof}

In light of this result, for a Schubert variety $X(w)$, if the decomposition of $\mathbb{C}[X(w)]$ into irreducible $L_w$-modules is multiplicity free, then $X(w)$ is a spherical $L_w$ variety. Using Theorem \ref{theorem:mainDecompositionTheorem}, we will exhibit several classes of Schubert varieties for which this is the case. 

\begin{theorem}
\label{thm:smoothSpherical}
 Let $X(w)$ be a smooth Schubert variety in $\Grass_{d,N}$. Then the decomposition of $\mathbb{C}[X(w)]$ into irreducible $L_w$-modules is multiplicity free and $X(w)$ is a spherical $L_w$-variety.
\end{theorem}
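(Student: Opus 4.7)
The strategy is to apply Proposition \ref{proposition:coneProjVarMultFreeSphericity}: since Schubert varieties in the Grassmannian are projectively normal, it suffices to prove that the decomposition of $\mathbb{C}[X(w)]$ into irreducible $L_w$-modules given by Corollary \ref{corollary:mainDecompositionTheoremIrred} is multiplicity free. The plan is to feed the classical characterization of smooth Grassmannian Schubert varieties into the head combinatorics developed in Section \ref{subsec:BlockHeadClassPartitionDeg1}.

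The combinatorial input I would use is the classical fact (Lakshmibai--Weyman--Ryan): $X(w)$ is smooth if and only if the Young diagram of $w$ is a rectangle, equivalently $w=(1,2,\ldots,d-a,\,d-a+b+1,\ldots,d+b)$ for some integers $0\leq a\leq d$ and $0\leq b\leq N-d$. Proposition \ref{proposition:FormOfStabw} then gives $\widehat{R}_{Q_w}\subseteq\{d-a,\,d+b\}$, whence $L_w=\GL_{d-a}\times\GL_{a+b}\times\GL_{N-d-b}$ (trivial factors suppressed in the boundary cases $a=d$, $b=0$, or $d+b=N$).

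The key step is to verify that $\He_{L_w}=\{w\}$. Any $\tau=(i_1,\ldots,i_d)\leq w$ automatically satisfies $i_j=j$ for $j\leq d-a$, so $\tau\cap\Bl_{L_w,2}$ must have exactly $a$ elements inside $\{d-a+1,\ldots,d+b\}$; Proposition \ref{proposition:HeadTypeW}(iii) forces this $a$-subset to be the maximal top collection $\{d+b-a+1,\ldots,d+b\}$, yielding $\tau=w$. Consequently $\He^{std}_{L_w,r}=\{(w,\ldots,w)\}$ is a singleton for every $r\geq 1$.

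With $\underline{\theta}=(w,\ldots,w)$, the tableau $\tab_{\underline{\theta}}$ restricted to the $k$th block is the full rectangle of shape $(r^{m_k})$ where $m_k=|w\cap\Bl_{L_w,k}|$, so $\lambda_{\underline{\theta}}^{(k)}=(r^{m_k})$ and $\mu_{\underline{\theta}}^{(k)}=\emptypart$. Each factor $\mathbb{W}^{\lambda_{\underline{\theta}}^{(k)}/\mu_{\underline{\theta}}^{(k)}}$ is therefore an irreducible straight-shape Weyl module, and Theorem \ref{theorem:mainDecompositionTheorem}(c) makes $\mathbb{C}[X(w)]_r\cong\mathbb{W}^{*}_{\underline{\theta}}$ itself $L_w$-irreducible. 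Since distinct graded pieces carry distinct $T_{L_w}$-weights (the weight sum of a degree $r$ monomial is $-rd$), no cross-degree multiplicities arise and the full decomposition is multiplicity free; Proposition \ref{proposition:coneProjVarMultFreeSphericity} then delivers sphericity. I expect the only nontrivial step to be the combinatorial identity $\He_{L_w}=\{w\}$, where smoothness actually enters; what follows is a routine specialization of Theorem \ref{theorem:mainDecompositionTheorem}.
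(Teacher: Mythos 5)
Your proposal is correct and follows essentially the same route as the paper's proof: parameterize smooth $w$ so that every $\tau\leq w$ has its first $d-a$ entries forced, use Proposition \ref{proposition:HeadTypeW}(iii) to conclude $\He_{L_w}=\{w\}$, hence a single standard degree $r$ head $(w,\ldots,w)$ whose associated $\mathbb{W}_{\underline{\theta}}$ factors into straight-shape (rectangular) Weyl modules and is therefore irreducible, then invoke Theorem \ref{theorem:mainDecompositionTheorem} and Proposition \ref{proposition:coneProjVarMultFreeSphericity}. The only cosmetic difference is that you justify the cross-degree non-isomorphism explicitly via the total $T_{L_w}$-weight $-rd$, where the paper simply asserts it is clear.
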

\begin{proof}
If $X(w)$ is smooth then $w$ is of the form
\begin{center}
$(1,\ldots,p,m+1,\ldots,m+i)$
\end{center}
for a unique choice of $0\leq p,i \leq d$ and $m<N$ such that $m+i\leq N$, $p \neq m$, and $p+i=d$.

Then $L_w=\GL_p \times \GL_{(m+i)-p} \times \GL_{N-(m+i)}$. Any $\tau:=(i_1,\ldots,i_d)$ such that $\tau \leq w$ must have that $i_1=1,\ldots,i_p=p$. Combining this with the combinatorial description of heads in Proposition \ref{proposition:HeadTypeW} we see that the single degree 1 head is $w$ itself. Thus there is a single standard degree r head $\underline{\theta}^r:=(w,\ldots,w)$, which is the sequence containing r copies of $w$. Then

\begin{center}
$\mathbb{W}_{\underline{\theta}^r}=\mathbb{W}^{(r^p)/\emptypart}\otimes \mathbb{W}^{(r^i)/\emptypart}\otimes \mathbb{W}^{\emptypart}$
\end{center}
is an irreducible $L$-module since each term in the tensor product is a Weyl module associated to a partition. Thus for all $r\geq 1$, $\mathbb{W}_{\underline{\theta}^r}^{*}$ is an irreducible $L_w$-module. Further, for $r,r' \geq 1$ with $r\neq r'$, it is clear that $\mathbb{W}_{\underline{\theta}^r}^{*}$ is not isomorphic to $\mathbb{W}_{\underline{\theta}^{r'}}^{*}$.

Combining the above with Theorem \ref{theorem:mainDecompositionTheorem} and Theorem \ref{theorem:stdMonTheoryFundamental} we have that $\mathbb{C}[X(w)]$ has a decomposition into irreducible $L_w$ modules that is multiplicity free. Thus, by Proposition \ref{proposition:coneProjVarMultFreeSphericity}, $X(w)$ is a spherical $L_w$-variety.
\end{proof}

\subsection{Determinental varieties}
\label{subsec:determinentalv}

Let $B^{-}$ be the subgroup of lower triangular matrices in $\GL_N$. Then $B^{-}[e_{id}]$ is a dense open subset of $G_{d,N}$, called \emph{the opposite big cell} in $G_{d,N}$. For a Schubert variety $X(w)$ in $\Grass_{d,N}$, let $Y(w):=B^{-}[e_{id}] \bigcap X(w)$;  note that $Y(w)$ is non-empty, since $[e_{id}]\in B^{-}[e_{id}] \bigcap X(w)$. We have that $Y(w)$ is an open affine subvariety of $X(w)$ and is usually called \emph{the opposite cell} in $X(w)$.  
We have that \begin{center}
$B^{-}[e_{id}]=\left\{ \left[ \begin{array}{cc}
Id_{d\times d} & 0_{d\times N-d} \\
X_{N-d \times d} & Id_{N-d\times N-d} \\
\end{array} \right] \in \GL_N \right\}$
\end{center}
$X_{N-d \times d}$ being a generic ${N-d \times d}$ matrix. Thus we obtain a natural identification of $M_{N-d, d}(\mathbb{C})$, the space of ${N-d \times d}$ matrices (over $\mathbb{C}$), with the dense open subset $B^{-}[e_{id}]$ of $G_{d, N}$.

\begin{definition}
\label{definition:determinentalSchubert}
Let $1\leq t<min(d,N-d)$. The \emph{determinental variety} $D_t(\mathbb{C})$ is the subset of $M_{N-d, d}(\mathbb{C})$, consisting of all $N-d \times d$ matrices over $\mathbb{C}$ with rank $\leq t$.  We have that under the above identification of $M_{N-d, d}(\mathbb{C})$ with $B^{-}[e_{id}]$,
 $D_t(\mathbb{C})$ gets identified with $Y(w)$ for $w=(t+1,\ldots,d,N-t+1,\ldots,N)$(cf. \cite[Section 1.6]{MR3308481}). The Schubert varieties with $w=(t+1,\ldots,d,N-t+1,\ldots,N)$ are referred to as \emph{determinental Schubert varieties}. Note that these are precisely the Schubert varieties which are $P_{d}\;$-stable for left multiplication.
\end{definition}

To show that the determinental Schubert varieties are spherical $L_w$-varieties we will use the following theorem which relates the number of blocks of type $L_w$, which we have denoted $\dprl{L_w}$, to the multiplicity freeness of the decomposition of $\mathbb{C}[X(w)]$.


\begin{theorem} 
\label{theorem:multFreeNumBlocks}
 Let $w=(\ell_1,\ldots,\ell_{d})\in W^{P}$. If any of the following properties hold then the decomposition of $\mathbb{C}[X(w)]$ into irreducible $L_w$-modules is multiplicity free.
\begin{enumerate}[label=(\roman*)]
\item $\dprl{L_w}=1$
\item $\dprl{L_w}=2$
\item $\dprl{L_w}=3$ and $\ell_d \neq N$
\end{enumerate}
\end{theorem}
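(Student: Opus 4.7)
The plan is to invoke Corollary \ref{corollary:mainDecompositionTheoremIrred}, which reduces multiplicity-freeness of $\mathbb{C}[X(w)]$ as an $L_w$-module to showing that for every $\underline{\theta}\in \He_{L_w,r}^{std}$ with $r\geq 1$, the module $\mathbb{W}_{\underline{\theta}}^{*}$ is irreducible, and that distinct $\underline{\theta}$ give non-isomorphic irreducibles. For case (i), $\dprl{L_w}=1$ forces $\widehat{R}_{Q_w}=\emptyset$ in Proposition \ref{proposition:FormOfStabw}, so $w=(N-d+1,\ldots,N)$, $X(w)=\Grass_{d,N}$, and $L_w=\GL_N$. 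The only head of type $L_w$ is $w$ itself, so at each degree $r$ the unique standard degree $r$ head is $(w,\ldots,w)$, yielding the irreducible $\GL_N$-module $(\mathbb{W}^{(r^d)})^{*}$; different values of $r$ give non-isomorphic irreducibles (as already noted in Remark \ref{remark:howeBranching}).

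In case (ii), with two blocks $\Bl_{L_w,1}$ and $\Bl_{L_w,2}$, each head $\theta$ is determined by $a:=|\theta\cap \Bl_{L_w,1}|$ and equals the concatenation of the top $a$ elements of $\Bl_{L_w,1}$ with the top $d-a$ elements of $\Bl_{L_w,2}$. The first step is to prove that for $\underline{\theta}=(\theta_1,\ldots,\theta_r)\in \He_{L_w,r}^{std}$ with corresponding parameters $a_1,\ldots,a_r$, one has $a_1\leq\cdots\leq a_r$, which follows by comparing the first entries of the $\theta_i$ in the Bruhat order. Then $\tab_{\underline{\theta}}$ splits as a block-$\Bl_{L_w,1}$ part whose shape is the straight partition $\lambda^{(1)}$ (so $\mu^{(1)}=\emptyset$) with column lengths $(a_r,a_{r-1},\ldots,a_1)$, together with a block-$\Bl_{L_w,2}$ part equal to the complementary region inside an $r\times d$ rectangle. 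By Remark \ref{remark:pirotpartition} and identity \eqref{equation:dualofweylmodule}, the $\pi$-rotation of this complement is a straight partition, so $\mathbb{W}^{\lambda^{(2)}/\mu^{(2)}}$ is an irreducible Weyl module and $\mathbb{W}_{\underline{\theta}}^{*}$ is an irreducible $L_w$-module. For non-isomorphism of distinct $\underline{\theta}$: within a fixed $r$, the partition $\lambda^{(1)}$ recovers $(a_1,\ldots,a_r)$ by prepending zeros; across degrees, the total cell count $|\lambda^{(1)}|+|\lambda^{(2)}/\mu^{(2)}|=rd$ recovers $r$. Thus distinct $\underline{\theta}$ give distinct irreducibles.

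For case (iii), the hypothesis $\ell_d\neq N$ places $\ell_d$ into $\widehat{R}_{Q_w}$ (via the description in Proposition \ref{proposition:FormOfStabw}), so the top block equals $\Bl_{L_w,3}=\{\ell_d+1,\ldots,N\}$. Since every $\tau\in H_w$ satisfies $\tau_d\leq \ell_d$, no element of $H_w$ meets $\Bl_{L_w,3}$; consequently every block-$3$ subtableau $\tab_{\underline{\theta}}^{(3)}$ is empty, and the third tensor factor in $\mathbb{W}_{\underline{\theta}}$ is constantly the trivial $\GL_{N_3}$-module. The multiplicity of any $L_w$-irreducible $V_1\otimes V_2\otimes V_3$ in $\mathbb{C}[X(w)]$ is thus zero unless $V_3$ is trivial, and otherwise equals the $\GL_{N_1}\times\GL_{N_2}$-multiplicity of $V_1\otimes V_2$, which is handled by the two-block analysis of case (ii). The main obstacle throughout is case (ii), specifically verifying the monotonicity $a_1\leq\cdots\leq a_r$ and pinning down the block-$\Bl_{L_w,2}$ skew shape precisely enough to invoke the $\pi$-rotation identity.
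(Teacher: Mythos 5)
Your proposal takes essentially the same route as the paper's proof: reducing multiplicity-freeness to showing each $\mathbb{W}_{\underline{\theta}}^{*}$ is irreducible and that distinct standard degree-$r$ heads give non-isomorphic modules, with case (ii) handled by observing $\mu_{\underline{\theta}}^{(1)}=\emptypart$ and that the block-2 shape is a rectangle minus a partition (so its $\pi$-rotation is a straight partition), non-isomorphism via box-count across degrees and $\lambda_{\underline{\theta}}^{(1)}$ within a degree, and case (iii) reducing to case (ii) since the third block's subtableaux are always empty. The only cosmetic differences are that the paper disposes of case (i) by citing Theorem \ref{thm:smoothSpherical} rather than re-deriving it, and obtains $\mu_{\underline{\theta}}^{(1)}=\emptypart$ via a row-wise argument rather than your column-length monotonicity; both are correct.
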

\begin{proof}
If $\dprl{L_w}=1$, then we know that $w=(N-d+1,\ldots,N)$ and hence $X(w)$ is smooth. Thus the result follows by Theorem \ref{thm:smoothSpherical}.

Now we consider the case when $\dprl{L_w}=2$. Let $r\geq 1$. Consider an arbitrary standard degree r head $\underline{\theta} \in \He_{L_w, r}^{std}$. This proof will proceed in three steps. The first step is to show that $\mathbb{W}_{\underline{\theta}}^{*}$ is an irreducible $L_w$-module.

We have 
\begin{center}
$\mathbb{W}_{\underline{\theta}} \cong \mathbb{W}^{\nicefrac{\lambda_{\underline{\theta}}^{(1)}}{\mu_{\underline{\theta}}^{(1)}}}\otimes\mathbb{W}^{\nicefrac{\lambda_{\underline{\theta}}^{(2)}}{\mu_{\underline{\theta}}^{(2)}}}$.
\end{center}

Consider $\nicefrac{\lambda_{\underline{\theta}}^{(1)}}{\mu_{\underline{\theta}}^{(1)}}$. This skew diagram corresponds to those boxes in $\tab_{\underline{\theta}}$ whose entries are in $\Bl_{L_w, 1}$. If a row in $\tab_{\underline{\theta}}$ has a box whose entry is in $\Bl_{L_w, 1}$ then the leftmost box in the row also must have its entry in $\Bl_{L_w, 1}$ since $\tab_{\underline{\theta}}$ is a semistandard tableaux and in this case $\Bl_{L_w, 1}=\{1,\ldots,m\}$ for some $1\leq m<N$. This implies $\mu_{\underline{\theta}}^{(1)}=\emptypart$. Thus $\nicefrac{\lambda_{\underline{\theta}}^{(1)}}{\mu_{\underline{\theta}}^{(1)}}=\nicefrac{\lambda_{\underline{\theta}}^{(1)}}{\emptypart}=\lambda_{\underline{\theta}}^{(1)}$.


Next consider $\nicefrac{\lambda_{\underline{\theta}}^{(2)}}{\mu_{\underline{\theta}}^{(2)}}$.
This skew diagram corresponds to those boxes in $\tab_{\underline{\theta}}$ whose entries are in $\Bl_{L_w, 2}$. If a row in $\tab_{\underline{\theta}}$ has a box whose entry is in $\Bl_{L_w, 2}$ then the rightmost box in the row also must have its entry in $\Bl_{L_w, 2}$ since $\tab_{\underline{\theta}}$ is a semistandard tableaux and in this case $\Bl_{L_w, 2}=\{m+1,\ldots,N\}$. Thus $\nicefrac{\lambda_{\underline{\theta}}^{(2)}}{\mu_{\underline{\theta}}^{(2)}}=\nicefrac{(e^f)}{\mu_{\underline{\theta}}^{(2)}}$, for $e,f$ positive integers less than $r,d$ respectively, and we have that 
\begin{center}
$\mathbb{W}^{\nicefrac{\lambda_{\underline{\theta}}^{(2)}}{\mu_{\underline{\theta}}^{(2)}}}=\mathbb{W}^{\nicefrac{(e^f)}{\mu_{\underline{\theta}}^{(2)}}} \cong \mathbb{W}^{(\nicefrac{(e^f)}{\mu_{\underline{\theta}}^{(2)}})^{\pi}}$.
\end{center}

Hence
\begin{center}
$\mathbb{W}_{\underline{\theta}} \cong \mathbb{W}^{\lambda_{\underline{\theta}}^{(1)}}\otimes \mathbb{W}^{(\nicefrac{(e^f)}{\mu_{\underline{\theta}}^{(2)}})^{\pi}}$.
\end{center}

By Remark \ref{remark:pirotpartition} both of the terms in the tensor product are Weyl modules associated to partitions and thus $\mathbb{W}_{\underline{\theta}}$ is an irreducible $L_w$-module, which implies $\mathbb{W}_{\underline{\theta}}^{*}$ is an irreducible $L_w$-module.

Let $r,r'\geq 1$ such that $r \neq r'$. Let $\underline{\theta} \in \He_{L_w, r}^{std}$ and $\underline{\theta}' \in \He_{L_w, r'}^{std}$. Step two is to show that $\mathbb{W}_{\underline{\theta}}$ can not be isomorphic to $\mathbb{W}_{\underline{\theta}'}$, which will imply that $\mathbb{W}_{\underline{\theta}}^{*}$ can not be isomorphic to $\mathbb{W}_{\underline{\theta}'}^{*}$.

We have by step one, that
\begin{center}
$\mathbb{W}_{\underline{\theta}} \cong  \mathbb{W}^{\lambda_{\underline{\theta}}^{(1)}}\otimes \mathbb{W}^{(\nicefrac{(e^f)}{\mu_{\underline{\theta}}^{(2)}})^{\pi}}$
\end{center}
and
\begin{center}
$\mathbb{W}_{\underline{\theta}'} \cong  \mathbb{W}^{\lambda_{\underline{\theta}'}^{(1)}}\otimes \mathbb{W}^{(\nicefrac{({e'}^{f'})}{\mu_{\underline{\theta}'}^{(2)}})^{\pi}}$
\end{center}

In particular, we may count the number of boxes in each of these two $L_w$-modules.
\begin{equation}
\label{equation:detvareq1}
\begin{array}{rll}
|\lambda_{\underline{\theta}}^{(1)})| + |(\nicefrac{(e^f)}{\mu_{\underline{\theta}}^{(2)}})^{\pi}| &= |\lambda_{\underline{\theta}}^{(1)}| + \left(|\lambda_{\underline{\theta}}^{(2)}|-|\mu_{\underline{\theta}}^{(2)}|\right) & \\
 &= rd & (\textrm{by }\eqref{equation:sumofboxesallpartitions}) \\
\end{array}
\end{equation}
and
\begin{equation}
\label{equation:detvareq2}
\begin{array}{rll}
|\lambda_{\underline{\theta}'}^{(1)})| + |(\nicefrac{({e'}^{f'})}{\mu_{\underline{\theta}'}^{(2)}})^{\pi}| &= |\lambda_{\underline{\theta}'}^{(1)}| + \left(|\lambda_{\underline{\theta}'}^{(2)}|-|\mu_{\underline{\theta}'}^{(2)}|\right) & \\
 &= r'd & (\textrm{by }\eqref{equation:sumofboxesallpartitions}) \\
\end{array}
\end{equation}

Now, suppose that $\mathbb{W}_{\underline{\theta}} \cong \mathbb{W}_{\underline{\theta}'}$. This would imply that each of the individual terms in the tensor products are isomorphic. Since they are all irreducible they must have the same number of boxes. This implies
\begin{center}
$\begin{array}{rl}
|\lambda_{\underline{\theta}}^{(1)})| + |(\nicefrac{(e^f)}{\mu_{\underline{\theta}}^{(2)}})^{\pi}|&=|\lambda_{\underline{\theta}'}^{(1)})| + |(\nicefrac{({e'}^{f'})}{\mu_{\underline{\theta}'}^{(2)}})^{\pi}| \\
rd &= r'd \qquad \qquad \qquad \qquad (\textrm{by }\eqref{equation:detvareq1}\textrm{, }\eqref{equation:detvareq2})\\
\end{array}$
\end{center}

Since $d > 0$, this last equality implies that $r = r'$, which is a contradiction of our assumption. Thus we can not have $\mathbb{W}_{\underline{\theta}} \cong \mathbb{W}_{\underline{\theta}'}$ for $r \neq r'$, which further implies we can not have $\mathbb{W}_{\underline{\theta}}^{*} \cong \mathbb{W}_{\underline{\theta}'}^{*}$.

Finally, for our third step, fix an $r\geq 1$ and let  $\underline{\theta},\underline{\theta}' \in \He_{L_w, r}^{std}$. Suppose that $\mathbb{W}_{\underline{\theta}}^{*} \cong \mathbb{W}_{\underline{\theta}'}^{*}$. This implies that  $\mathbb{W}_{\underline{\theta}} \cong \mathbb{W}_{\underline{\theta}'}$ and

\begin{center}
$ \mathbb{W}^{\lambda_{\underline{\theta}}^{(1)}}\otimes \mathbb{W}^{(\nicefrac{(e^f)}{\mu_{\underline{\theta}}^{(2)}})^{\pi}} \cong \mathbb{W}^{\lambda_{\underline{\theta}'}^{(1)}}\otimes \mathbb{W}^{(\nicefrac{({e'}^{f'})}{\mu_{\underline{\theta}'}^{(2)}})^{\pi}}$
\end{center}
which implies, since these are all partitions, that  $\lambda_{\underline{\theta}}^{(1)}=\lambda_{\underline{\theta}'}^{(1)}$. Both $\tab_{\underline{\theta}}$ and $\tab_{\underline{\theta}'}$ are semistandard tableau on the diagram $(r^d)$. So $\lambda_{\underline{\theta}}^{(1)}=\lambda_{\underline{\theta}'}^{(1)}$ implies that the boxes in $\tab_{\underline{\theta}}$ whose entries are in $\Bl_{L_w, 1}$ are the same boxes in $\tab_{\underline{\theta}'}$ whose entries are in $\Bl_{L_w, 1}$. And since there are only two blocks it says the same about those boxes whose entries are in $\Bl_{L_w, 2}$. The fact that $\underline{\theta}=\underline{\theta}'$ is then a consequence of Lemma \ref{lemma:HeadClassUnique}. 

Combining the above with Theorem \ref{theorem:mainDecompositionTheorem} we have that $\mathbb{C}[X(w)]$ has a decomposition into irreducible $L_w$ modules that is multiplicity free. 

The final case we consider is when $\dprl{L_w}=3$ and $\ell_d \neq N$. In this case, if $\theta \in \He_{L_w,1}$, then $\theta \bigcap \Bl_{L_w,3}=\emptyset$ since $\theta \leq w$ and $\Bl_{L_w,3}=\{\ell_d+1,\ldots,N \}$. This implies for any $\underline{\theta} \in \He_{L_w, r}^{std}$ that $\nicefrac{\lambda_{\underline{\theta}}^{(3)}}{\mu_{\underline{\theta}}^{(3)}}=\emptypart$. But then the proof that $\mathbb{C}[X(w)]$ has a decomposition into irreducible $L_w$-modules that is multiplicity free follows by exactly the same arguments used to prove the case when $\dprl{L_w}=2$.
\end{proof}

\begin{corollary}
\label{corollary:sphericalSchubertDet}
Let $X(w)$ be a determinental Schubert variety in $\Grass_{d,N}$. Then $X(w)$ is a spherical $L_w$-variety.
\end{corollary}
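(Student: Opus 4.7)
The plan is to reduce the statement to Theorem \ref{theorem:multFreeNumBlocks}(ii) combined with Proposition \ref{proposition:coneProjVarMultFreeSphericity}. The key computation is to pin down $\dprl{L_w}$ for the determinental Schubert variety $w = (t+1, \ldots, d, N-t+1, \ldots, N)$.

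First I would apply Proposition \ref{proposition:FormOfStabw} to determine $\widehat{R}_{Q_w}$. Inspecting the entries of $w$: for $1 \leq m < d-t$ one has $\ell_{m+1} = \ell_m + 1$, and for $d-t < m < d$ one also has $\ell_{m+1} = \ell_m + 1$, so such $m$ contribute nothing to $\widehat{R}_{Q_w}$. The only candidate for a genuine jump is at $m = d-t$, where $\ell_m = d$ and $\ell_{m+1} = N-t+1$. The hypothesis $1 \leq t < \min(d, N-d)$ forces $d < N - t$, hence $\ell_m + 1 = d+1 \leq N-t < N-t+1 = \ell_{m+1}$, so $d \in \widehat{R}_{Q_w}$. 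Finally $\ell_d = N \notin \{1, \ldots, N-1\}$ contributes nothing. Therefore $\widehat{R}_{Q_w} = \{d\}$, and $\dprl{L_w} = |\widehat{R}_{Q_w}| + 1 = 2$, giving the two blocks $\Bl_{L_w,1} = \{1,\ldots,d\}$ and $\Bl_{L_w,2} = \{d+1,\ldots,N\}$.

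Second, by Theorem \ref{theorem:multFreeNumBlocks}(ii), the decomposition of $\mathbb{C}[X(w)]$ into irreducible $L_w$-modules is multiplicity free. Since Schubert varieties in the Grassmannian are projectively normal (a standard consequence of the standard monomial basis of Theorem \ref{theorem:stdMonTheoryFundamental}), Proposition \ref{proposition:coneProjVarMultFreeSphericity} applies with $G = L_w$ and yields that $X(w)$ is a spherical $L_w$-variety.

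There is no genuine obstacle here: the corollary is essentially immediate once one observes that a determinental Schubert variety has exactly two $L_w$-blocks. The only point requiring any care is the verification of the inequality $d + 1 < N - t + 1$, which is forced by the standing hypothesis $t < \min(d, N-d)$ built into the definition of a determinental Schubert variety.
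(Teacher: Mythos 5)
Your proof is correct and follows essentially the same route as the paper: compute $\dprl{L_w}=2$, invoke Theorem \ref{theorem:multFreeNumBlocks}(ii) for multiplicity freeness, and apply Proposition \ref{proposition:coneProjVarMultFreeSphericity}. The only difference is that you spell out the computation of $\widehat{R}_{Q_w}=\{d\}$ and note projective normality explicitly, whereas the paper asserts $\dprl{L_w}=2$ without elaboration.
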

\begin{proof}
By definition $w$ is of the form $(t+1,\ldots,d,N-t+1,\ldots,N)$ for $1\leq t<min(d,N-d)$. Thus $\dprl{L_w}=2$, and by Theorem \ref{theorem:multFreeNumBlocks} the decomposition of $\mathbb{C}[X(w)]$ into irreducible $L_w$-modules is multiplicity free. Thus, by Proposition \ref{proposition:coneProjVarMultFreeSphericity}, $X(w)$ is a spherical $L_w$-variety.
\end{proof}

\begin{corollary}
\label{corollary:sphericalDet}
Let $1\leq t<min(d,N-d)$ and $w=(t+1,\ldots,d,N-t+1,\ldots,N)$. Then the determinental variety $D_t(\mathbb{C})$ is $L_w$-stable and is a spherical $L_w$-variety.
\end{corollary}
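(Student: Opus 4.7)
The plan is to deduce the corollary from Corollary \ref{corollary:sphericalSchubertDet} via the general principle that a $G$-stable open subvariety of a spherical $G$-variety is itself spherical, provided it is normal and meets the open $B_G$-orbit. Since $D_t(\mathbb{C})$ is identified with the open subvariety $Y(w) = B^-[e_{id}] \cap X(w)$ of the determinantal Schubert variety $X(w)$, the two things I would need to verify are that $Y(w)$ is $L_w$-stable and that $Y(w)$ inherits the open dense $B_{L_w}$-orbit of $X(w)$.

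For the $L_w$-stability, I would first apply Proposition \ref{proposition:FormOfStabw} to $w = (t+1, \ldots, d, N-t+1, \ldots, N)$, which gives $\widehat{R}_{Q_w} = \{d\}$ and hence $Q_w = P_d$, so that $L_w = \GL_d \times \GL_{N-d}$ is the block-diagonal Levi of $P_d$. Since $L_w$ stabilizes $X(w)$ by construction, it remains to show that the opposite cell $B^-[e_{id}]$, which equals the principal open $\{\pl_{id} \neq 0\}$, is $L_w$-stable; for this it suffices to show that $\mathbb{C} \cdot \pl_{id}$ is an $L_w$-invariant line. By Remark \ref{remark:weightPlucker}, $\pl_{id}$ has $T$-weight $(-1, \ldots, -1, 0, \ldots, 0)$, which is weakly $L_w$-dominant with respect to the block decomposition $\{1, \ldots, d\} \sqcup \{d+1, \ldots, N\}$. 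A brief weight-space argument (or a direct computation showing that $(A, B) \in L_w$ acts on $\pl_{id}$ by $\det(A)^{-1}$) then gives that $\mathbb{C} \cdot \pl_{id}$ is $L_w$-invariant, whence $\{\pl_{id} \neq 0\}$ and $Y(w)$ are both $L_w$-stable.

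For the sphericity conclusion, I would invoke Corollary \ref{corollary:sphericalSchubertDet} to obtain an open dense $B_{L_w}$-orbit $U \subseteq X(w)$. Since $Y(w)$ is a nonempty open subvariety of the irreducible variety $X(w)$, the intersection $U \cap Y(w)$ is nonempty; and since $Y(w)$ is $L_w$-stable, in particular $B_{L_w}$-stable, the $B_{L_w}$-orbit of any point of $U \cap Y(w)$, which is all of $U$, lies inside $Y(w)$. Therefore $U \subseteq Y(w)$ is open and dense in $Y(w)$, providing the required orbit. Normality of $Y(w)$ is automatic, being an open subvariety of the normal Schubert variety $X(w)$. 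The main obstacle in this plan is really just the $L_w$-stability of the opposite cell, which reduces to the short weight computation above; once that is in hand, the remainder of the argument is formal from the results already established.
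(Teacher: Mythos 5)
Your proposal is correct and follows the same route as the paper: identify $D_t(\mathbb{C})$ with the open subvariety $Y(w)=B^-[e_{id}]\cap X(w)$ of the spherical $L_w$-variety $X(w)$ (using Corollary \ref{corollary:sphericalSchubertDet}), establish $L_w$-stability of $Y(w)$ via $L_w$-stability of the opposite big cell, and conclude sphericity. You supply two short verifications that the paper leaves implicit, namely that $\mathbb{C}\cdot\pl_{id}$ is an $L_w$-eigenline (so $\{\pl_{id}\neq 0\}=B^-[e_{id}]$ is $L_w$-stable) and that the open dense $B_{L_w}$-orbit of $X(w)$ must lie entirely inside $Y(w)$, but the structure of the argument is the same as the paper's.
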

\begin{proof}
As in Definition \ref{definition:determinentalSchubert}, $D_t(\mathbb{C})$ is realized as $Y(w)$ for $w=(t+1,\ldots,d,N-t+1,\ldots,N)$. By Corollary \ref{corollary:sphericalSchubertDet} we have that the determinental Schubert variety $X(w)$ is a spherical $L_w$-variety. Also, as noted in Definition \ref{definition:determinentalSchubert}, we have $Q_w=P_{d}\;$ and $L_w=\GL_d\times \GL_{N-d}$. Further, $B^-[e_{id}]$ is 
$\GL_d\times \GL_{N-d}$-stable. Hence $D_t(\mathbb{C})$ is a $L_w$-stable sub variety of $X(w)$. Thus $Y(w)$ is an open $L_w$-stable subvariety of $X(w)$, and hence is a spherical $L_w$-variety.
\end{proof}

\begin{corollary}
\label{corollary:sphericaldet}
Let $1\leq t<min(d,N-d)$ and $w=(t+1,\ldots,d,N-t+1,\ldots,N)$. Then the decomposition of $\mathbb{C}[D_t(\mathbb{C})]$ into irreducible $L_{w}$-modules is multiplicity free.
\end{corollary}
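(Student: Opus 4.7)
The plan is to exploit the converse direction of the sphericity criterion invoked in Proposition~\ref{proposition:coneProjVarMultFreeSphericity}. By Corollary~\ref{corollary:sphericalDet}, $D_t(\mathbb{C}) = Y(w)$ is already known to be a spherical $L_w$-variety. To deduce multiplicity-freeness of the coordinate ring from sphericity, I would apply Timashev's theorem \cite[Theorem~25.1]{MR2797018}, used in the proof of Proposition~\ref{proposition:coneProjVarMultFreeSphericity}, in its ``only if'' direction: for $G$ reductive, an affine normal $G$-variety is $G$-spherical if and only if its coordinate ring decomposes multiplicity-freely into irreducible $G$-modules.

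The task then reduces to verifying that $D_t(\mathbb{C})$ is affine and normal, with $L_w$ reductive (already established). Affineness is immediate, since by Definition~\ref{definition:determinentalSchubert}, $D_t(\mathbb{C})$ is realized as a closed subvariety of the affine space $M_{N-d,d}(\mathbb{C}) \cong B^{-}[e_{id}]$. For normality, I have two parallel routes: either quote the classical fact that the determinantal variety $D_t(\mathbb{C})$ is normal (in fact Cohen--Macaulay, by Hochster--Eagon); or, staying internal to the framework of this paper, use that $X(w)$ is a normal projective variety and that $Y(w) = X(w) \cap B^{-}[e_{id}]$ is an open subvariety of $X(w)$, so normality descends from $X(w)$ to $Y(w) = D_t(\mathbb{C})$ because normality is a local property.

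With affineness, normality, and $L_w$-sphericity in hand, a direct application of Timashev's criterion delivers the multiplicity-free decomposition of $\mathbb{C}[D_t(\mathbb{C})]$ as an $L_w$-module. I do not anticipate any substantive obstacle: this corollary is essentially Corollary~\ref{corollary:sphericalDet} read in the reverse direction through the same equivalence, and the only mildly nontrivial input is normality of the determinantal variety, which is standard. I would expect the written proof to be only a couple of lines long.
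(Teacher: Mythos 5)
Your proposal is correct and follows essentially the same route as the paper: invoke Corollary~\ref{corollary:sphericalDet} for $L_w$-sphericity of $D_t(\mathbb{C})$, note affineness, and apply Timashev's criterion in the direction you call the ``only if'' direction. The one small redundancy is your separate verification of normality --- the paper's definition of spherical variety already requires normality, so Corollary~\ref{corollary:sphericalDet} supplies it for free, and the paper's proof correspondingly omits any explicit normality check.
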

\begin{proof}
By Corollary \ref{corollary:sphericalDet}, $D_t(\mathbb{C})$ is a spherical $L_w$-variety. Further, $D_t(\mathbb{C})$ being an affine variety, the result follows from \cite[Theorem 25.1]{MR2797018}.
\end{proof}

\begin{corollary}
Let $X(w)$ be a Schubert variety in $\Grass_{2,N}$. Then $X(w)$ is a spherical $L_w$-variety.
\end{corollary}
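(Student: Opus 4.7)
The plan is to reduce to Theorem \ref{theorem:multFreeNumBlocks} by a short case analysis on the number of blocks of $L_w$ when $d=2$. Write $w=(\ell_1,\ell_2) \in W^{P_2}$ with $\ell_1 < \ell_2 \leq N$. By Proposition \ref{proposition:FormOfStabw}, the set $\widehat{R}_{Q_w} \subseteq \{1,\ldots,N-1\}$ is determined by whether $\ell_1+1 = \ell_2$ and whether $\ell_2 = N$: an entry $\ell_m$ contributes to $\widehat{R}_{Q_w}$ exactly when $\ell_m \leq N-1$ and $\ell_m + 1 \neq \ell_{m+1}$. Hence $|\widehat{R}_{Q_w}| \leq 2$, so $\dprl{L_w} \leq 3$.

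I would then enumerate the four possible configurations. If $\ell_1+1=\ell_2$ and $\ell_2 = N$, then $\widehat{R}_{Q_w} = \emptyset$ and $\dprl{L_w}=1$; in this case $X(w) = \Grass_{2,N}$ is smooth. If exactly one of the conditions ``$\ell_1+1=\ell_2$'' or ``$\ell_2=N$'' fails, then $|\widehat{R}_{Q_w}|=1$ and $\dprl{L_w}=2$. Finally, if both fail, then $\widehat{R}_{Q_w}=\{\ell_1,\ell_2\}$, $\dprl{L_w}=3$, and crucially $\ell_d = \ell_2 \neq N$. In every case one of the three hypotheses (i), (ii), (iii) of Theorem \ref{theorem:multFreeNumBlocks} is satisfied, and therefore the decomposition of $\mathbb{C}[X(w)]$ into irreducible $L_w$-modules is multiplicity free.

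To conclude, I would invoke Proposition \ref{proposition:coneProjVarMultFreeSphericity}: since Schubert varieties in the Grassmannian are projectively normal (a standard fact of standard monomial theory, compatible with the Pl\"ucker embedding used here), and $L_w$ is a connected reductive group acting linearly on $X(w)$ via its embedding in $\GL_N$, the multiplicity-free decomposition of $\mathbb{C}[X(w)]$ implies that $X(w)$ is a spherical $L_w$-variety.

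There is no real obstacle here: the entire argument is a packaging of Theorem \ref{theorem:multFreeNumBlocks} together with the combinatorial observation that in $\Grass_{2,N}$ the only way to obtain three blocks is through the configuration $\ell_1+1<\ell_2<N$, which already falls under hypothesis (iii). The only mildly subtle point worth spelling out is that all cases with $\dprl{L_w}=3$ automatically satisfy $\ell_d \neq N$, which is what allows Theorem \ref{theorem:multFreeNumBlocks}(iii) to apply uniformly.
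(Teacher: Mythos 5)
Your proposal is correct and follows essentially the same route as the paper: observe that for $d=2$ only the configurations $\dprl{L_w}\in\{1,2\}$ or $\dprl{L_w}=3$ with $\ell_2\neq N$ can occur, then apply Theorem \ref{theorem:multFreeNumBlocks} and Proposition \ref{proposition:coneProjVarMultFreeSphericity}. You spell out the case analysis via Proposition \ref{proposition:FormOfStabw} and flag the (implicit in the paper) projective normality hypothesis, but the underlying argument is identical.
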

\begin{proof}
For all $w=(\ell_1,\ell_2) \in W^{P_{2}}$, either $\dprl{L_w}=1$, $\dprl{L_w}=2$, or $\dprl{L_w}=3$ with $\ell_2 \neq N$. Thus the result follows from Theorem \ref{theorem:multFreeNumBlocks} and Proposition \ref{proposition:coneProjVarMultFreeSphericity}. 
\end{proof}

\begin{remark}
It is worth noting that some of the results in this section can be seen by more elementary arguments. The smooth Schubert variety $X(w)$, with $w=(1,\ldots,p,m+1,\ldots,m+i)$, in the Grassmannian $\Grass_{d,N}$, is isomorphic to $\Grass_{i, (m+i)-p}$. The fact that $\Grass_{i, (m+i)-p}$ is a spherical variety for the action of $\GL_{(m+i)-p}$ follows by the Bruhat decomposition; the other factors of $L_w$ act trivially. Hence a smooth Schubert variety $X(w)$ is a spherical $L_w$-variety. In the case of the determinental variety $D_{t}(\mathbb{C}) \subset M_{N-d, d}(\mathbb{C})$, the action of $L_w$ on $D_{t}(\mathbb{C})$ is given by left multiplication by matrices in $\GL_{N-d}$ and right multiplication by matrices in $\GL_{d}$. Letting $B_{N-d}$ and $B_d$ denote the Borel subgroups of upper triangular matrices in $\GL_{N-d}$ and $\GL_{d}$ respectively, $D_{t}(\mathbb{C})$ has a decomposition into $B_{N-d} \times B_{d}$ orbits given by matrix Schubert cells corresponding to partial permutations of rank at most $t$ \cite[Prop. 15.27]{MR2110098}. There are clearly a finite number of such orbits and hence $D_{t}(\mathbb{C})$ is a spherical $L_w$-variety. As far as the authors are aware, the sphericity of the determinental Schubert varieties and the Schubert varieties in $\Grass_{2,N}$ are new results.  
\end{remark}

\bibliographystyle{alpha}
\bibliography{LeviSubgroupActions.RHodges.VLakshmibai-final-rev}

\end{document}